\documentclass[reqno]{amsart}

\usepackage{graphicx,pdfsync,amssymb, latexsym,amsfonts,amsbsy,color,subcaption,esint,mathtools,enumerate,nicefrac}

\usepackage{graphicx,pdfsync,amssymb, latexsym,amsfonts,amsbsy,color,subcaption,esint,mathtools,nicefrac,tikz}

\usepackage{parskip}

\usepackage{calligra}
\usepackage{mathtools}
\newtagform{total}{\text{\calligra  \Large Total dissipation anomaly }(}{)}
\newtagform{partial}{\text{\calligra  \Large Partial dissipation anomaly }(}{)}
\newtagform{partial_or_no}{\text{\calligra  \Large Partial or no dissipation anomaly }(}{)}
\newtagform{partial_total_or_no}{\text{\calligra  \Large Partial, total,  or no dissipation anomaly }(}{)}
\newtagform{noDA}{\text{\calligra  \Large No dissipation anomaly }(}{)}
\newtagform{LofE}{\text{\calligra  \Large Energy limit }(}{)}
\newtagform{EofL}{\text{\calligra  \Large Energy of the limit }(}{)}
\newtagform{LofD}{\text{\calligra  \Large Dissipation limit }(}{)}

\usepackage{enumitem}
\setitemize{itemsep=0.5em}
\setenumerate{itemsep=0.5em}

\usepackage{hyperref}
\hypersetup{
colorlinks=true,
linkcolor=blue,
filecolor=blue,      
urlcolor=blue,
citecolor=blue,
}

\usepackage[utf8]{inputenc}
\usepackage[T1]{fontenc}


\usepackage{tikz,caption}
\usetikzlibrary{patterns}

\usepackage[top=1.1in, bottom=1in, left=1in, right=1in]{geometry}


\DeclareMathOperator{\Supp }{supp}




\newtheorem{theorem}{Theorem}[section]
\newtheorem{lemma}[theorem]{Lemma}

\newtheorem{definition}[theorem]{Definition}
\newtheorem{assumption}[theorem]{Assumption}

\newtheorem{remark}[theorem]{Remark}

\usepackage{times}
\usepackage{setspace}
\def \TT  {\mathbb{T}} 
\def \RR {\mathbb{R}}  
\def \NN {\mathbb{N}}  
\def \ZZ {\mathbb{Z}}  

\def \l {\lambda}
\def \L {\Lambda}

\def \p {\partial}

\newcommand{\comment}[1]{}


\numberwithin{equation}{section}

\begin{document}

\title[Dissipation anomaly and anomalous dissipation]{Dissipation anomaly and anomalous dissipation in incompressible fluid flows}

\author{Alexey Cheskidov}
\address[Alexey Cheskidov]{Institute for Theoretical Sciences, Westlake University}

\email{cheskidov@westlake.edu.cn}


\date{\today}

\begin{abstract}
Dissipation anomaly, a phenomenon predicted by Kolmogorov's theory of turbulence, is the persistence of a non-vanishing energy dissipation
for solutions of the Navier-Stokes equations as the viscosity goes to zero. Anomalous dissipation, predicted by Onsager, is a failure of solutions of the limiting Euler equations to preserve the energy balance. Motivated by a recent dissipation anomaly construction for the 3D Navier-Stokes equations by Bru\`e and De Lellis (2023), we prove the existence of various scenarios in the limit of vanishing viscosity: the total and partial loss of the energy due to dissipation anomaly, absolutely continuous dissipation anomaly, anomalous dissipation without dissipation anomaly, and the existence of infinitely many limiting solutions of the Euler equations in the limit of vanishing viscosity.  We also discover a relation between dissipation anomaly and the discontinuity of the energy of the limit solution.
Finally, expanding on the obtained total dissipation anomaly construction, we show the existence of dissipation anomaly for long time averages, relevant for turbulent flows, proving that the Doering-Foias (2002) upper bound is sharp.
\end{abstract}


\maketitle

\section{Introduction}
Dissipation anomaly, predicted by Kolmogorov’s theory of turbulence \cite{1371131421197965440}, postulates that
\begin{equation} \label{eq:DA-intro}
\frac{\epsilon \ell}{U^3} = \mathcal{O}(Re^0) \quad \text{as} \quad Re \to \infty,
\end{equation}
where $\epsilon$ is the total energy dissipation rate per unit mass, $\ell$  is an integral length scale in the flow, $U$ is a turbulent characteristic velocity, and $Re = U\ell/\nu$ is the Reynolds number with $\nu$ denoting the kinematic viscosity coefficient.
This law has been extensively verified experimentally for turbulent fluid flows as well as numerically for solutions of the 3D Navier-Stokes equations at high Reynolds numbers  \cite{10.1063/1.864731, 10.1063/1.869575, 10.1063/1.1539855, 10.1063/1.1445422, doi:10.1146/annurev-fluid-010814-014637}. On the other hand, anomalous dissipation, predicted by Onsager \cite{Onsager}, is a failure of solutions of the limiting Euler equations to satisfy the energy balance, necessary for the validity of the inviscid equations in turbulent regimes.
  The precise regularity threshold conjectured by Onsager for inviscid solutions to sustain the energy cascade has been a focus of intensive mathematical investigations for decades, see Subsection~\ref{Subsec:Previous_results}. 



In turbulent flows, the energy injected at forced low modes (large scales) cascades to small scales through the inertial range where viscous effects are negligible, and only dissipates above Kolmogorov's dissipation wavenumber that goes to infinity as $Re \to \infty$. The persistence of the energy flux through the inertial range is what constitutes dissipation anomaly for viscous fluid flows as well as anomalous dissipation for the limiting inviscid flows. Inspired by the construction due to Bru\`e and De Lellis \cite{MR4595604}, we first analyze these intrinsically linked phenomena on a finite time interval and prove the existence of various scenarios in the limit of vanishing viscosity, ranging from the total dissipation anomaly to a pathological one where anomalous dissipation occurs without dissipation anomaly (see Theorem~\ref{thm:main}), as well as the existence of infinitely many limiting solutions of the Euler equations in the limit of vanishing viscosity (see Theorem~\ref{thm:main-2}).

Then we turn to long time averages of the energy dissipation, appropriate for describing turbulent fluid flows, for which it was shown by Foias and Doering \cite{MR1928940} that the upper bound
\begin{equation} \label{b:bound_on_drag_coefficient-intro}
\frac{\epsilon \ell}{U^3} \leq c_1 +c_2 Re^{-1},
\end{equation}
holds with dimensionless coefficients $c_1$ and $c_2$ dependent only on the shape of the force. Extending the obtained total dissipation anomaly construction to the whole time interval, we show the existence of dissipation anomaly for long time averages, proving that the Doering-Foias upper bound \eqref{b:bound_on_drag_coefficient-intro} is sharp, see Theorem~\ref{thm:DA_in_FT}.

\subsection{Dissipation anomaly for finite time averages}

We consider the vanishing viscosity limit of solutions to the 3D Navier-Stokes equations
\begin{equation}\label{eq:NSE} \tag{NSE}
\begin{cases}
\p_t u^{\nu}  + (u^{\nu} \cdot \nabla)u^{\nu}  = -\nabla p^{\nu} + \nu  \Delta u^{\nu}+ f^{\nu} &\\
\nabla \cdot u^{\nu} = 0&\\
u^{\nu}(0) = u_{\mathrm{in}}
\end{cases}
\end{equation}
posed on $\TT^3$. Here $\nu > 0$ is the viscosity coefficient, the initial data $u_{\mathrm{in}}\in L^2$ is divergence free, and we consider weak solutions on $[0,2]$ satisfying the energy equality
\begin{equation} \label{eq:EE}
\|u^{\nu}(t)\|_{L^2}^2 =  \|u_{\mathrm{in}}\|_{L^2}^2 - 2\nu\int_{0}^{t} \|\nabla u^{\nu}(\tau)\|_{L^2}^2 \, d\tau + 2\int_{0}^{t} (f^{\nu},u^{\nu}) \, d\tau,
\end{equation}
for all $t \in[0,2]$. In fact, in all the constructions the initial data $u_{\mathrm{in}}$, the force $f^\nu$, and solutions of \eqref{eq:NSE} will be smooth.
To ensure the convergence of the work done by the force $f^\nu$, we require
$f^{\nu} \to f$ in $L^1(0,2;L^2)$ as $\nu \to 0$. The goal is to analyze potential dissipation anomaly in the limit of vanishing viscosity when $u^{\nu} \to u$  in $C([0,2];L^2_w),$ and $u$ is a weak solution of the Euler equations.

We will be examining the following objects:
\usetagform{LofE}
\begin{flalign} 
\label{intro:DefE} \qquad E(t):={\displaystyle \liminf_{\nu\to 0}}\|u^{\nu}(t)\|_{L^2}^2&&
\end{flalign}
\vspace{-1em}
\usetagform{LofD}
\begin{flalign}
\qquad D(t):={\displaystyle \limsup_{\nu \to 0} 2\nu\int_{0}^t \|\nabla u^{\nu}(\tau)\|_{L^2}^2 \, d\tau}&&
\end{flalign}
\vspace{-1em}
\usetagform{EofL}
\begin{flalign}
\qquad \|u(t)\|_{L^2}^2&&
\end{flalign}
\usetagform{default}

The assumption $f^{\nu} \to f$ in $L^1(0,2;L^2)$ together with the weak convergence of $u^\nu$ to $u$ implies the convergence of the work done by the force, which we denote by $W$:
\[
W(t) := \lim_{\nu \to 0} 2\int_0^t (f^{\nu},u^{\nu}) \, d\tau = 2\int_0^t (f,u) \, d\tau.
\]
In other words, there is no {\em anomalous work}, or work on vanishing scales, see \cite{MR4283701}. Hence taking the limit of the energy equality \eqref{eq:EE} and using the weak convergence of $u^\nu$ to $u$ we obtain
\begin{equation} \label{eq:Intro_Enegry_Balance}
0 \leq \|u(t)\|_{L^2}^2 \leq E(t)= \|u_{\mathrm{in}}\|_{L^2}^2 -D(t)+W(t),
\end{equation}
and, in particular,
\begin{equation} \label{eq:Intro_D(t)_bounds}
0\leq D(t) \leq \|u_{\mathrm{in}}\|_{L^2}^2+W(t).
\end{equation}

We will study two related phenomena. 

\begin{definition} \label{Def:Dissipation_Anomaly_Intro}
With $D(t)$ and $W(t)$ defined as above, we say \\
\vspace{-1em}
\begin{itemize}
\item The family of solutions to \eqref{eq:NSE} $u^\nu$ exhibits {\bf \em dissipation anomaly} on $[0,t]$ if $D(t)>0$.
\item
The limiting solution $u(t)$ exhibits {\bf \em anomalous dissipation} on $[0,t]$ if $ \|u_{\mathrm{in}}\|_{L^2}^2 +W(t) - \|u(t)\|_{L^2}^2>0$.
\end{itemize}
\end{definition}
First, notice that \eqref{eq:Intro_Enegry_Balance} yields
\[
\|u_{\mathrm{in}}\|_{L^2}^2 +W(t) - \|u(t)\|_{L^2}^2 \geq D(t),
\]
and hence {\bf \em dissipation anomaly} automatically implies {\bf \em anomalous dissipation}. We will analyze various possibilities for the above inequality and, among other things, show  that the converse is not true. We will follow the approach of Bru\`e and De Lellis \cite{MR4595604} who recently constructed the first example of {\bf \em dissipation anomaly} in the sense of Definition~\ref{Def:Dissipation_Anomaly_Intro}. 


We will examine the following 

 \noindent
{\bf Questions:}
{
\begin{enumerate}
\item Can $D(t)$  achieve all the possible values allowed by  inequalities in \eqref{eq:Intro_D(t)_bounds}? \label{Q1}
\item Can $\|u(t)\|_{L^2}^2$ achieve all the possible values allowed by  inequalities in \eqref{eq:Intro_Enegry_Balance}?  \label{Q2}
\item Can $E(t)$ and $D(t)$ be continuous (or even absolutely continuous) with nontrivial $D(t)$? \label{Q3}
\item Can $\|u(t)\|_{L^2}$ be continuous  with nontrivial $D(t)$? \label{Q:Cont_u}
\item Does {\bf \em Anomalous Dissipation} imply {\bf \em Dissipation Anomaly}? \label{Q5}
\item Can there be infinitely many limiting solutions of the Euler equations in the limit of vanishing viscosity (violating the vanishing viscosity selection principle)? \label{Q6}
\end{enumerate}
}

\vspace{0.1in}

We summarize the results addressing Questions~\eqref{Q1}, \eqref{Q2}, \eqref{Q3}, \eqref{Q5}, and \eqref{Q6} below. See Theorems~\ref{thm:main} and \ref{thm:main-2} for precise statements. A conditional (negative) answer to Question \eqref{Q:Cont_u} is given in Theorems~\ref{thm:discont-intro} and \ref{thm:Discont_of_u}.

\vspace{0.1in}

\noindent
{\bf Forward energy cascade before $t=1$: arbitrary {\em dissipation anomaly}, discontinuous $E(t)$ -- Figure~\ref{fig:Thm1-convergence}.}

{\em
There is a family  of solutions to \eqref{eq:NSE} $\{u^\nu(t)\}_{\nu}$ on $[0,2]$ with the same initial data $u_{\mathrm{in}}$
with $\|u_{\mathrm{in}}\|_{L^2}^2=1$ 
that converges weakly to a solution of the Euler equation $u(t)$ that satisfies the energy equality up to $t=1$ (where the work of the force $W(1)=0$), but looses all the energy at $t=1$,
\[
u(t)=0, \quad \forall t\geq 1,
\] 
such that for every $e\in[0,1]$ there is a subsequence with the dissipation anomaly and energy limit being
\[
\qquad D(1)=e, \qquad D(t)=1, \qquad E(1)=1-e, \qquad E(t)=0,  \quad \forall t>1.
\] 
}
\vspace{0.1in}

\noindent
{\bf Forward energy cascade before $t=1$: arbitrary {\em dissipation anomaly}, absolutely continuous $E(t)$ -- Figure~\ref{fig:Thm1-energy_profiles}.}

{\em
For  
every $e\in[0,1)$ there is a subsequence of the above family $\{u^\nu\}$ with
\[
D(2)=e, \qquad E(2)=1-e, \qquad E \in C([0,2]).
\] 
In particular, when $e=0$, there is no {\bf \em dissipation anomaly} in the presence of {\bf \em anomalous dissipation}.
}
\vspace{0.1in}

\noindent
{\bf Forward energy cascade before $t=1$ followed by inverse energy cascade after $t=1$: infinitely many limiting solutions -- Figure~\ref{fig:Thm2-convergence}.}

{\em
There is initial data $u_{\mathrm{in}}$ that gives rise to a family of solutions to \eqref{eq:NSE} $\{u^\nu(t)\}_{\nu}$ and a sequence of solutions of the Euler equations $u_n(t)$ on $[0,2]$ with increasing $\|u_n(2)\|_{L^2}$ and such that
\[
u^{\nu^n_j} \to u_n, \qquad \text{in} \qquad C_w([0,2];L^2),
\]
as $j \to \infty$, for some subsequence  $\nu^n_j \to 0$.
}

\subsection{Dissipation anomaly for long time averages} \label{Subsec:long-time_averages}
In mathematical theory of turbulence it is appropriate to take long time averages as they reveal the underlying statistical properties of turbulent flows.
Since the diameter of the weak global attractor in $L^2$ in general grows  as the viscosity goes to zero (or the amplitude of the force goes to infinity), the dissipation anomaly law has to be stated in terms of dimensionless variables in this case. Also, we do not expect a body force, or even its shape, to stay fixed in an experiment. So we start with any sequence of dimensionless shapes of the force $\phi^j$  converging in $L^2$ to some nontrivial divergence-free $\phi$.
Now consider Leray-Hopf weak solutions $u^\nu$ of \eqref{eq:NSE} on torus $[0,\ell]^3$ with force $f(x)=F \phi^j(\ell^{-1} x)$, $F \in \RR$. For any such solution, we can use a generalized Banach limit to define the energy dissipation $\epsilon$, characteristic velocity $U$, and the Reynolds number $Re$ as follows: 
\begin{equation} \label{def:eps,U,Re}
\begin{gathered}
\epsilon=\langle \nu |\nabla u^\nu|^2 \rangle  :=\underset{T\to \infty}{\mathrm{Lim}} \frac{1}{T\ell^3}\int_0^T \nu \|\nabla u^\nu\|_{L^2}^2\, dt,\qquad
U=\langle |u^\nu|^2 \rangle^{\frac12}:=\left(\underset{T\to \infty}{\mathrm{Lim}} \frac{1}{T\ell^3}\int_0^T \|u^\nu\|_{L^2}^2\, dt\right)^{1/2},\\
Re := \frac{U\ell}{\nu}.
\end{gathered}
\end{equation}
Then a generalization of the argument by Foias \cite{MR1467006} and Doering and Foias \cite{MR1928940} gives an upper bound on the normalized energy dissipation (also called the drag coefficient, see Frisch \cite{Frisch}),
\begin{equation} \label{b:bound_on_drag_coefficient}
\frac{\epsilon \ell}{U^3} \leq c_1 +c_2 Re^{-1},
\end{equation}
where the dimensionless coefficients $c_1$ and $c_2$ depend only on the limit shape function $\phi$. As predicted by Kolmogorov's theory of turbulence and observed in experiments and direct numerical simulations, bound \eqref{b:bound_on_drag_coefficient} is expected to be saturated by turbulent flows. It is easy to construct examples of laminar flows, for instance, steady states,  where $\frac{\epsilon \ell}{U^3} \sim Re^{-1}$ for all $Re$. However, there is no known example of a sequence of the NSE solutions with $Re \to \infty$, where bound \eqref{b:bound_on_drag_coefficient} is attained at high Reynolds numbers. i.e.,
\[
\limsup_{Re \to \infty} \frac{\epsilon \ell}{U^3} >0.
\]

On the other hand, it is hard to expect the body force to stay constant in time throughout an experiment. So in a more realistic framework, we consider a sequence of dimensionless shapes of the force $\phi^j$  converging in $C(\RR; L^2(\TT^3))$ to some  $\phi$. Then following the argument of Doering and Foias \cite{MR1928940} we can show
\begin{theorem} \label{thm:DF}
For any nontrivial divergence-free $\phi \in L^\infty(\RR; L^2(\TT^3))$ there exist constants $c_1$ and $c_2$, such that for every Leray-Hopf weak solution $u^\nu$ of \eqref{eq:NSE} with arbitrary divergence-free finite energy initial data and force $f(x,t)=F \phi^j(\ell^{-1} x,t)$ with arbitrary amplitude $F \in \RR$ and shapes $\phi^j \to \phi$ in $C(\RR; L^2(\TT^3))$,
the energy dissipation satisfies
\begin{equation} \label{bound_on_drag_coefficient_thm}
\frac{\epsilon \ell}{U^3} \leq c_1 +c_2 Re^{-1},
\end{equation}
where $\epsilon$, $U$, and $Re$ are defined as in \eqref{def:eps,U,Re}.
\end{theorem}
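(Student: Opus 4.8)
The plan is to run the Foias--Doering argument that produces \eqref{b:bound_on_drag_coefficient}, the genuinely new point being the time-dependence (and $j$-dependence) of the forcing shape. Write $u=u^\nu$ and, for a time function $g$, let $\overline g:=\mathrm{Lim}_{T\to\infty}\tfrac1T\int_0^T g\,dt$ for a fixed generalized Banach limit, so that $\epsilon\ell^3=\overline{\nu\|\nabla u\|_{L^2}^2}$ and $U^2\ell^3=\overline{\|u\|_{L^2}^2}$; we may assume $\phi$ has zero spatial mean, the general case being degenerate (then $U=+\infty$ and the bound is trivial). Since $u$ is a Leray--Hopf solution and $f=F\phi^j(\ell^{-1}\cdot,t)$ is bounded in $L^\infty(\RR;L^2)$, the energy inequality combined with the Poincaré inequality on $[0,\ell]^3$ gives, by Gronwall, a bound on $\sup_t\|u(t)\|_{L^2}$; integrating the energy inequality over $[0,T]$, dividing by $T$ and taking the Banach limit yields
\[
\epsilon\ell^3\ \le\ \overline{(f,u)}\ =\ F\,\overline{(\phi^j(\ell^{-1}\cdot,t),u)}.
\]

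The crux is a momentum-type identity from testing \eqref{eq:NSE} against the divergence-free comparison flow $v(x,t):=\phi^j(\ell^{-1}x,t)$. Pairing with $v$, integrating by parts in $x$, time-averaging, and using $\overline{\tfrac{d}{dt}(u,v)}=0$ (valid since $(u,v)$ is bounded in $t$ by the previous step) gives
\[
F\ell^3\,\overline{\|\phi^j(\cdot,t)\|_{L^2(\TT^3)}^2}\ =\ -\overline{(u,\partial_t v)}-\overline{((u\cdot\nabla)v,u)}-\nu\,\overline{(u,\Delta v)}.
\]
Set aside the first term on the right. Rescaling to $\TT^3$ (so $\nabla_x\mapsto\ell^{-1}\nabla$, $\Delta_x\mapsto\ell^{-2}\Delta$) and using $\overline{\|u\|_{L^2}^2}=U^2\ell^3$ bounds the inertial term by $\ell^2\|\nabla\phi^j\|_{L^\infty_{t,x}}U^2$ and the viscous term by $\nu\ell\|\Delta\phi^j\|_{L^\infty_tL^2_x}U$ (the latter using Jensen for $\overline{\|u\|_{L^2}}$), while Cauchy--Schwarz in $x$ and Jensen in time give $\overline{(f,u)}\le C\,|F|\,\ell^3\,\|\phi^j\|_{L^\infty_tL^2_x}U$. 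Inserting the resulting bound on $|F|$ into the last display and dividing by $U^3$ reproduces exactly the two terms $c_1+c_2Re^{-1}$ of \eqref{bound_on_drag_coefficient_thm}, with $c_1\sim\|\phi^j\|_{L^\infty_tL^2}\|\nabla\phi^j\|_{L^\infty_{t,x}}/\overline{\|\phi^j(\cdot,t)\|_{L^2}^2}$ and $c_2\sim\|\phi^j\|_{L^\infty_tL^2}\|\Delta\phi^j\|_{L^\infty_tL^2}/\overline{\|\phi^j(\cdot,t)\|_{L^2}^2}$. For $\phi^j$ independent of $t$ the $\partial_t v$ term is absent and this is verbatim Foias--Doering, which is why \eqref{b:bound_on_drag_coefficient} holds.

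To make $c_1,c_2$ depend on $\phi$ only, observe that each coefficient above is a monotone function of finitely many norms of $\phi^j$ (up to two $x$-derivatives) and of $\overline{\|\phi^j(\cdot,t)\|_{L^2}^2}$, which sits in the denominator. Uniform convergence $\phi^j\to\phi$ in $C(\RR;L^2(\TT^3))$ gives
\[
\big|\,\overline{\|\phi^j(\cdot,t)\|_{L^2}^2}-\overline{\|\phi(\cdot,t)\|_{L^2}^2}\,\big|\ \le\ \big(\|\phi^j\|_{L^\infty_tL^2}+\|\phi\|_{L^\infty_tL^2}\big)\,\|\phi^j-\phi\|_{C(\RR;L^2)}\ \longrightarrow\ 0,
\]
and "$\phi$ nontrivial" is precisely the statement $\overline{\|\phi(\cdot,t)\|_{L^2}^2}>0$, so for all large $j$ the denominators remain above a positive constant depending only on $\phi$. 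For the higher $x$-norms one either uses that the $\phi^j$ in the application of this theorem (Theorem~\ref{thm:DA_in_FT}) are smooth and converge in the relevant norms, or first mollifies $\phi^j$ in $x$, runs the estimate, and lets the mollification scale go to zero; in both cases the numerators agree with the corresponding $\phi$-quantities up to $o(1)$. Choosing $c_1,c_2$ slightly larger than the limiting ratios yields \eqref{bound_on_drag_coefficient_thm} uniformly in $F$, $\ell$, $\nu$, $j$, and the initial data.

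The main obstacle is the term $\overline{(u,\partial_t v)}$ created by the time-dependence of the shape. It has no counterpart in the steady Foias--Doering identity, and the crude estimate $\overline{|(u,\partial_t v)|}\le\|\partial_t v\|_{L^\infty_tL^2}\,\overline{\|u\|_{L^2}}$ contributes to $\epsilon\ell/U^3$ a term of order $(\ell/U)\,\|\partial_t\phi^j\|_{L^\infty_tL^2}/\overline{\|\phi^j(\cdot,t)\|_{L^2}^2}$, which is not of the admissible form $c_1+c_2Re^{-1}$. Closing the argument requires controlling this contribution honestly: one route is to integrate by parts in $t$ and re-express $\partial_t v$ through the equation in a way that does not resurrect the same quantity; another is to regularize $\phi^j$ in time at a scale comparable to the eddy-turnover time $\ell/U$, so that the contribution is absorbed into $c_1$ exactly in the regime $\|\partial_t\phi^j\|\sim(U/\ell)\|\phi^j\|$ in which \eqref{bound_on_drag_coefficient_thm} is saturated. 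A related technical point is that, because $\phi^j$ is only continuous in $t$ with values in $L^2(\TT^3)$, the comparison flow $v$ and its time derivative must be interpreted via such a regularization, and the resulting errors must be shown to vanish in the Banach limit and not to affect the constants. This is the step at which the Doering--Foias scheme genuinely has to be adapted rather than merely transcribed.
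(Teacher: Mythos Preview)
The paper does not supply a proof of this theorem; it simply asserts that the bound follows ``following the argument of Doering and Foias'' and moves on. Your outline is precisely the Doering--Foias scheme (energy inequality for $\epsilon\le\overline{(f,u)}$, then a momentum identity from testing the equation against the force shape to eliminate the amplitude $F$), so at the level of strategy you match the paper's intent exactly.

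That said, you have yourself isolated two genuine gaps, and your proposal does not close either of them. First, the term $\overline{(u,\partial_t v)}$ generated by time-dependence of the shape is not bounded: your crude estimate yields a contribution of order $(\ell/U)\|\partial_t\phi^j\|$ which is neither $O(1)$ nor $O(Re^{-1})$, and neither of your suggested fixes (integration by parts in $t$, or time-mollification at scale $\ell/U$) is actually carried out. Second, since $\phi^j\to\phi$ only in $C(\RR;L^2(\TT^3))$, the norms $\|\nabla\phi^j\|_{L^\infty_{t,x}}$ and $\|\Delta\phi^j\|_{L^\infty_tL^2}$ in your expressions for $c_1,c_2$ need not be finite, let alone uniformly bounded in $j$ or controlled by $\phi$ alone. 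Your mollification-in-$x$ remark is the right idea---one should mollify $\phi$ itself at a fixed spatial scale and test against that, so every constant depends only on $\phi$ and the mollification parameter---but again this is stated, not executed, and the interaction between the spatial mollification error and the bound on $F$ has to be checked. As written, then, your proposal is an honest and accurate sketch that pinpoints exactly where the steady Doering--Foias argument must be adapted, but it stops short of a proof; the paper, for its part, supplies none of these missing details either.
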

Now, using the total dissipation anomaly construction in Theorem~\ref{thm:main}, we can prove that bound \eqref{bound_on_drag_coefficient_thm} is attained by some sequence of solutions to \eqref{eq:NSE}.
\begin{theorem} \label{thm:DA_in_FT}
For every $\ell>0$, $U>0$, and $c>0$ there is a sequence of smooth time-periodic solutions to \eqref{eq:NSE} $u^{\nu_j} \in L^\infty(\RR,L^2([0,\ell]^3))$ with viscosity $\nu_j \to 0$, force $f^{\nu_j} \to f$ in $C(\RR;L^2)$ as $j \to 0$, such that \eqref{def:eps,U,Re} hold, as well as
\begin{equation} \label{eq:long_time_DA}
\lim_{j\to \infty} \frac{\epsilon \ell}{U^3} = c.
\end{equation}
\end{theorem}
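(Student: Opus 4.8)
The plan is to convert the total dissipation anomaly construction of Theorem~\ref{thm:main} into a time-periodic solution of \eqref{eq:NSE}, then to use its free parameters to hit exactly the prescribed value of the drag coefficient, and finally to rescale in space and time to realize the prescribed $\ell$ and $U$; I would work first on a fixed base torus $\TT^3$ and rescale only at the end. From Theorem~\ref{thm:main} in the total dissipation anomaly regime ($e=1$) take, along a suitable subsequence $\nu_j\to0$, smooth solutions $u^{\nu_j}$ on $[0,1]$ with $u^{\nu_j}(0)=u_{\mathrm{in}}$, $\|u_{\mathrm{in}}\|_{L^2}^2=1$, $W(1)=0$, $\|u^{\nu_j}(t)\|_{L^2}^2\to1$ for each $t\in[0,1)$, $2\nu_j\int_0^1\|\nabla u^{\nu_j}\|_{L^2}^2\,dt\to1$, and $\|u^{\nu_j}(1)\|_{L^2}^2\to0$; continuing on the interval $[1,2]$ provided by the same construction, one has moreover $\|u^{\nu_j}(t)\|_{L^2}^2\to0$ and $2\nu_j\int_1^2\|\nabla u^{\nu_j}\|_{L^2}^2\,dt\to0$. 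Using the explicit structure of the construction past the dissipation time together with the parabolic smoothing of \eqref{eq:NSE}, I would fix a time $t_0\in(1,2)$ at which the terminal field $b^{\nu_j}:=u^{\nu_j}(t_0)$ is small in every $H^s$ uniformly in $j$, so that $b^{\nu_j}\to0$ in $C^\infty$ and $\nu_j\|\nabla b^{\nu_j}\|_{L^2}^2\to0$; I would also record (this needs the proof of Theorem~\ref{thm:main}, not merely its statement) that the cascade forces converge in $C([0,1];L^2)$.

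Next comes the recovery phase and the periodization. Fix parameters $\delta>t_0-1$ and a smooth cutoff $\chi$ on $[t_0,1+\delta]$ with $\chi\equiv0$ near $t_0$ and $\chi\equiv1$ near $1+\delta$, and on $[t_0,1+\delta]$ prescribe the divergence-free path $\Phi^{\nu_j}:=\chi\,u_{\mathrm{in}}+(1-\chi)\,b^{\nu_j}+\rho^{\nu_j}$, where $\rho^{\nu_j}$ is a smooth divergence-free correction, flat at both endpoints, chosen so that $\Phi^{\nu_j}$ agrees to infinite order with $u^{\nu_j}$ at $t=t_0$ and with $u_{\mathrm{in}}$ at $t=1+\delta$; since $b^{\nu_j}\to0$ in $C^\infty$ one may take $\|\rho^{\nu_j}\|_{C^k}\to0$ for every $k$. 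Reading off the force $f^{\nu_j}:=\partial_t\Phi^{\nu_j}+\mathbb{P}\big((\Phi^{\nu_j}\cdot\nabla)\Phi^{\nu_j}\big)-\nu_j\Delta\Phi^{\nu_j}$ (with $\mathbb{P}$ the Leray projection), the field $u^{\nu_j}:=\Phi^{\nu_j}$ solves \eqref{eq:NSE} on $[t_0,1+\delta]$, the forces converge there in $C([t_0,1+\delta];L^2)$, and this phase is dissipation-free in the limit, $\nu_j\int_{t_0}^{1+\delta}\|\nabla\Phi^{\nu_j}\|_{L^2}^2\,dt\to0$, since the only a priori rough ingredient $(1-\chi)b^{\nu_j}$ is already $C^\infty$-small. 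Concatenating $[0,1]$, $[1,t_0]$ and $[t_0,1+\delta]$, and extending $(1+\delta)$-periodically, produces for each $j$ a smooth time-periodic solution of \eqref{eq:NSE} on $\RR$ of period $T_p=1+\delta$, with $\nu_j\to0$ and with forces converging in $C(\RR;L^2)$ -- the matchings at $t=1$, $t=t_0$ and $t\equiv 0$ modulo $T_p$ being $C^\infty$ by construction.

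It remains to compute and tune. For a time-periodic solution the limits defining $\epsilon$, $U$, $Re$ in \eqref{def:eps,U,Re} exist and equal the one-period averages; using $\|u^{\nu_j}(t)\|^2\to1$ on $[0,1)$, $\|u^{\nu_j}(t)\|^2\to0$ on $[1,t_0]$, $2\nu_j\int_0^1\|\nabla u^{\nu_j}\|^2\to1$, and the vanishing of the dissipation on $[1,t_0]$ and on $[t_0,1+\delta]$, one obtains, as $j\to\infty$,
\[
\epsilon\ \to\ \frac{1/2}{1+\delta},\qquad U^2\ \to\ \frac{1+m}{1+\delta},\qquad Re\ \to\ \infty,\qquad m:=\int_{t_0}^{1+\delta}\chi^2\,dt\in(0,\,1+\delta-t_0),
\]
hence $\tfrac{\epsilon\ell}{U^3}\to\Psi(\delta,m):=\tfrac{(1+\delta)^{1/2}}{2(1+m)^{3/2}}$. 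The map $\Psi$ is continuous on its connected parameter set; letting $\delta\to\infty$ with $m$ just below $1+\delta-t_0$ drives $\Psi\to0$, while letting $\delta\to\infty$ with $m$ fixed drives $\Psi\to\infty$, so by the intermediate value theorem there are $\delta$ and $\chi$ with $\Psi(\delta,m)=c$; fix them, and (by a negligible $j$-dependent adjustment of $\chi$, which does not affect any of the limits above) pin the one-period average $U_j^2$ to a common value. A single, $j$-independent Navier--Stokes rescaling $u(x,t)\mapsto\lambda\,u(ax,\lambda a\,t)$ then realizes the domain $[0,\ell]^3$ and the characteristic velocity $U$, preserves time-periodicity, leaves $\tfrac{\epsilon\ell}{U^3}$ and $Re$ invariant, sends $\nu_j\to0$, and keeps the forces converging in $C(\RR;L^2)$; this proves \eqref{eq:long_time_DA}.

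The step I expect to be the main obstacle is the one hidden in the first paragraph: extracting from the proof of Theorem~\ref{thm:main} both that the cascade forces converge in $C([0,1];L^2)$ (and not merely in $L^1(0,1;L^2)$, which is all the bare statement provides) and, more importantly, that the construction can be terminated at some $t_0\in(1,2)$ in a state $b^{\nu_j}$ that is small in $C^\infty$ uniformly in $j$. This uniform smoothness is exactly what makes the recovery phase simultaneously $C^\infty$-matchable and dissipation-free; it should follow from the fact that past the dissipation time the constructed solution is a tame, rapidly viscously damped object, but it genuinely uses the internals of that construction. Were $b^{\nu_j}$ only $L^2$-small yet rough in $H^1$, one would instead interpose a short cooldown sub-interval of length $\sigma_j\to0$ to let viscosity regularize it, and would then have to balance quantitatively the competing demands that the recovery force stay bounded in $L^2$ while the extra dissipation still vanishes -- a more delicate point whose feasibility depends on the actual roughness of $b^{\nu_j}$. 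Everything else -- the one-period averaging, the continuity and range of $\Psi$, and the scaling bookkeeping -- is routine.
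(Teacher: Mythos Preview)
Your approach is genuinely different from the paper's, and the obstacle you flag is real: in the total-dissipation family of Theorem~\ref{thm:main} one has $u^{\nu_j}(t)=(0,\theta^m(t))$ for $t>1$ with $\theta^m$ solving the heat equation at viscosity $\nu_m\sim m^{5/2}\lambda_m^{-2}$, and while $\|\theta^m(t_0)\|_{L^2}\to0$, a direct computation shows $\|\theta^m(t_0)\|_{H^1}\to\infty$. So $b^{\nu_j}$ is \emph{not} $C^\infty$-small, and your matching correction $\rho^{\nu_j}$ cannot be taken flat at $t_0$ (it must carry the time-jet $(\nu_j\Delta)^k b^{\nu_j}$). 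Your approach is salvageable --- heat smoothing from $t=1$ gives $\nu_j\|\Delta b^{\nu_j}\|_{L^2}\lesssim (t_0-1)^{-1}\|\theta^m(1)\|_{L^2}\to0$ and $\nu_j\|\nabla b^{\nu_j}\|_{L^2}^2\to0$, which are precisely the quantities the force and the extra dissipation see --- but the justification you give is incorrect and the matching needs to be redone.

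The paper avoids this difficulty entirely by \emph{not} recovering to $u_{\mathrm{in}}$. Instead, it exploits the $2+\tfrac12$-dimensional structure and periodizes by multiplying $\tilde\theta^m$ by a time cutoff $\eta$ supported near $t=1$ (and $\tilde v^m$ by another cutoff $\tilde\eta$), then $\tau$-periodizing. Because the third-component equation is linear in $\theta$, the only extra force this produces is $h^m=\eta'(t)\tilde\theta^m$, which needs only $L^2$ control: on the ``before'' support of $\eta'$ one has $\tilde\theta^m\to\tilde\rho$ in $L^\infty L^2$, and on the ``after'' support $\|\tilde\theta^m\|_{L^2}\to0$ by the total dissipation anomaly itself. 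So the periodic solution vanishes on part of each period and no $C^\infty$ smallness is ever needed. The free parameters are then an amplitude $a_m$ on the scalar (to fix $U$) and the period $\tau$ (to hit $c$); your two-parameter tuning via $(\delta,m)$ and final rescaling is a legitimate alternative and your range computation for $\Psi$ is correct.
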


Solutions $u^{\nu_j}$ in this theorem are ancient (complete), as they are in $L^\infty(\RR;L^2)$, so they stay on the weak pullback attractors $\mathcal{A}_{\mathrm w}(t)$ of \eqref{eq:NSE} for all values of viscosity as it goes to zero (and $Re \to \infty$). The weak pullback attractor captures all of the asymptotic dynamics of Leray-Hopf solutions, see \cite{MR3331677}. For low Reynolds number $Re$ (large viscosity), the weak pullback attractor consists of only one trajectory, $\mathcal{A}_{\mathrm w}(t)=\{u^{\nu_j}(t)\}$, see \cite{MR3383921}. So the constructed solutions $u^{\nu_j}(t)$ attract all the other trajectories in the pullback sense in this case. However, when $Re$ becomes large, $\mathcal{A}_{\mathrm w}(t)$ is expected to grow, and $u^{\nu_j}(t)$ might lose stability. So it is not clear whether \eqref{eq:long_time_DA} still holds at high Reynolds numbers if we also take some ensemble average. Hence there is no guarantee that the dissipation anomaly for the constructed sequence of solutions would be observable in numerical experiments run over long time. The existence of such a sequence  remains an important open problem in mathematical theory of turbulence.

\begin{remark}
Changing the time scale to (almost) exponential $t_n = 1-\lambda_n^{-1} n^2$ in the construction ensures that the the sequence $u^{\nu_j}$ of solutions to \eqref{eq:NSE} in Theorem~\ref{thm:DA_in_FT} is bounded in $L^3(0,T;C^\alpha)$ for every $\alpha <1/3$ and $T>0$, i.e., it is almost Onsager critical. In addition, it is possible to make it intermittent and hence even more physically relevant. These more complex constructions are out of scope of this note and will be discussed in a follow up paper.
\end{remark}

\begin{remark}
The integral length scale in Theorems~\ref{thm:DF} and \ref{thm:DA_in_FT} for simplicity is chosen to be the size of the torus. This makes bound \eqref{bound_on_drag_coefficient_thm} essentially as originally written by Foias \cite{MR1467006}. We have shown that there are shapes of forces that result in arbitrary large energy dissipation normalized such a way.
However, for the construction of Theorem~\ref{thm:DA_in_FT}, it is more natural to choose the integral length scale to be
\[
\mathcal{L}= U \tau,
\]
where $\tau$ is the time period of the solutions (see the proof of the theorem). Then
\[
C_{\epsilon} := \lim_{j\to \infty} \frac{\epsilon \mathcal{L} }{U^3} = 2,
\]
for the construction in the above theorem, which is consistent with experimental data collected in \cite{10.1063/1.2055529}. We refine upper bound \eqref{bound_on_drag_coefficient_thm} exploiting a space-time frequency support of a force in a companion paper.
\end{remark}

\begin{remark} Under long-time averaging, the dissipation anomaly is only possible when there is an injection of the energy. 
In fact, the average energy dissipation is equal to the average work done by the force, i.e.,
\[
\epsilon = \underset{T\to \infty}{\mathrm{Lim}} \frac{1}{T}\int_0^T (u^{\nu},f^{\nu}) \, dt,
\]
for smooth solutions $u^\nu$ (less than or equal for Leray-Hopf solutions). In Theorem~\ref{thm:DA_in_FT} we also have
\[
\lim_{\nu \to 0} \epsilon = \lim_{T\to \infty} \frac{1}{T}\int_0^T (u,f) \, dt,
\]
where $u$ is the limiting solution of the Euler equation such that $u^{\nu_j} \to u$ in $C_w(\RR; L^2(\TT^3))$.
\end{remark}

\vspace{0.1in}

\noindent
{\bf Fractal-forced turbulence.}
To emphasize the $L^2$ criticality of the force for the energy dissipation, we note that 
experimental and computational studies of so called fractal-generated turbulence have been performed in the past two decade, see \cite{gomes-fernandes_ganapathisubramani_vassilicos_2012} and references therein. In such flows the energy is injected at a wide range of scales in a self-similar fashion. In our setup this would correspond to the convergence in $H^{-\alpha}$ of the shapes of the force $\phi^j$ for some $\alpha>0$ (but not in $L^2$). As shown in \cite{MR2337007} (where the proof was done in the case of one fixed force shape), bound \eqref{b:bound_on_drag_coefficient} elevates to 
\[
\frac{\epsilon \ell}{U^3} \leq c_1 Re^{\frac{\alpha}{2-\alpha}}
 + c_2 Re^{-1}.
\]
The existence of flows that attain this bound, even in the case of time-dependent forces, is not known. However, there is numerical and experimental evidence confirming that the energy dissipation does not exhibit Kolmogorov scaling $\frac{\epsilon \ell}{U^3} \sim Re^0$, but rather $\frac{\epsilon \ell}{U^3} \sim Re^s$ for $s>0$ depending on the ``roughness'' of the force (i.e., the value of $\alpha$).

\subsection{Previous results} \label{Subsec:Previous_results}

\subsubsection{Onsager's conjecture and anomalous dissipation} From the positive side, starting with the work of Eyink \cite{MR1302409}, the exact H\"older regularity exponent conjectured by Onsager \cite{Onsager} has been reached by Constantin, E, and Titi in  \cite{MR1298949} with refinements  in \cite{MR1734632,MR2422377}, where the latter work shows that the energy balance holds for weak solutions in $L^3_tB^{1/3}_{3,c_0}$. Sharp lower bounds on the Hausdorff dimension of the space-time set which can support anomalous dissipation were obtained in \cite{derosa2023support}.

From the negative side, the first constructions of non-conservative solutions go back to the works of Scheffer \cite{MR1231007} and Shnirelman \cite{MR1476315}. More regular constructions appeared with a help of convex integration, a technique originated in isometric embedding problems of geometry dating back to the work of Nash and Kuiper. Its first application to fluid dynamics was carried out in 2009 by the pioneering work of De Lellis and Székelyhidi Jr. \cite{MR2600877}. Since then, this technique has been very fruitful in the fluids community. Remarkably, its development over a series of works has culminated in the resolution of the Onsager conjecture for the 3D Euler equations by Isett \cite{MR3866888} with a construction of a non-conservative weak solution in $C_{x,t}^{1/3-}$. In the scale of $L^2$-based Sobolev spaces, recently the authors in \cite{MR4649134, MR4601999} were able to show anomalous dissipation in $C_t H^{1/2-}$. In \cite{giri2023l3based}, anomalous dissipation with local energy inequality was obtain in $C_t W^{1/3-,3}$. Convex integration was even used to approach the Onsager space in two dimensions \cite{giri20232d}, where physically acceptable solutions arising in the limit of vanishing viscosity satisfy the energy balance even outside of the Onsager class \cite{MR3551263, MR4228012}. Two-dimensional turbulent flows are characterized by the inverse energy cascade and formation of large vortex structures, which present obstacles for the anomalous energy dissipation in the limit of vanishing viscosity. Convex integration seems to allow bypassing such obstacles. Remarkably, convex integration can be used to construct weak solutions of the Navier-Stokes equations testing the sharpness of positive uniqueness results \cite{MR3898708, MR4422213, MR4462623, MR4610908}, but without Kolmogorov's dissipation range.
Inviscid turbulent fluid flows constructed via convex integration exhibit some features of turbulence, but they are not vanishing viscosity limits of physical Navier-Stokes solutions with Kolmogorov's dissipation range.
Also, the locality of interactions in frequency space is a missing ingredient in current schemes. The locality of interactions is a feature of turbulent flows, and is necessary to reach the exact physical Onsager's $1/3$ exponent, see \cite{MR2422377}. Constructions based on perfect mixing, as studied in this paper, can produce solutions to the forced 3D Euler equations in $L^3_tC^{1/3-}_t$, see \cite{MR4595604}. We address the possibility of reaching the exact Onsager's $1/3$ exponent in a companion paper.	

\subsubsection{Effect of intermittency on the energy flux} While the energy spectrum of spatially homogeneous fields cannot decay faster than $\kappa^{-5/3}$ to sustain a non-vanishing in frequency energy flux, it can decay as fast as $\kappa^{-8/3}$ for highly intermittent fields and still produce energy cascade (see the volumetric theory of intermittency \cite{MR4581460} for precise definitions). Indeed, the $L^2$-based Onsager exponent in 3D is $5/6$, see \cite{MR2665030}. The fact that the nonlinear term becomes stronger for intermittent flows has been exploited in \cite{MR2566571, MR4283701} where Dirichlet-type building blocks with extreme intermittency were used to produce Leray-Hopf solutions of the Navier-Stokes equations discontinuous in the largest regularity-critical space $B^{-1}_{\infty,\infty}$, and discontinuous in the energy space $L^2$ while staying in borderline Onsager $L^p_tL^q_x$, $\frac2p+ \frac2q=1+$ spaces. Leray-Hopf solutions with low intermittency dimension stay regular, see \cite{MR3208714} for the intermittency threshold. In fact, the use of intermittent building blocks by Buckmaster and Vicol was essential in their groundbreaking proof of non-uniqueness for the 3D Navier-Stokes equations \cite{MR3898708}. The use of intermittency was essential in \cite{MR4649134, MR4601999} as well, even though current convex integration schemes seem not to be compatible with the extreme intermittency needed to reach Onsager critical $L^2$-based $H^{5/6}$ space. In this paper we follow the framework of Bru\`e and De Lellis \cite{MR4595604} based on spatially homogeneous mixing, which can be rescaled in time to reach the $L^\infty$-based Onsager space $L^3_tC^{\alpha}_x$, $\alpha<1/3$, but we stick to the original framework here for simplicity. In a companion work we introduce some intermittency and compute the multifractal spectrum, often measured in experiments and numerical simulations,  for such solutions.

\subsubsection{Dissipation Anomaly}
While dissipation anomaly is not possible in two-dimensional flows \cite{MR3187680} due to the absence of forward energy cascade, the known upper bounds on the energy dissipation in three dimensions are consistent with numerical and experimental observations.
Rigorous upper bounds on the energy dissipation rate for boundary driven flows go back to the work by Doering and Constantin \cite{PhysRevLett.69.1648}.
The first rigorous upper bound on the normalized dissipation rate for the forced 3D Navier-Stokes equations were obtained by Foias in \cite{MR1467006}, which was later refined as bound \eqref{b:bound_on_drag_coefficient-intro} by Foias and Doering in \cite{MR1928940}.

For the dyadic model of the Navier-Stokes equations, the author and Friedlander in \cite{MR2522972} proved that the the long time average of the energy dissipation does not vanish in the limit of vanishing viscosity for all the solutions. So this model exhibits dissipation anomaly not only for the the long time, but also ensemble averages, which is currently out of reach for the real equations. 
The first construction with dissipation anomaly for the advection-diffusion equation was obtained by Drivas, Elgindi, Iyer, and Jeong in \cite{MR4381138}.
In a very recent work \cite{armstrong2023anomalous}, Armstrong and Vicol constructed a divergence free vector field that produces dissipation anomaly for every $H^1$ initial scalar.
Also, in \cite{MR4662772}, Colombo, Crippa, and Sorella have shown dissipation anomaly for the advection-diffusion equation with any chosen regularity in the supercritical  Obukhov-Corrsin regime, as well as the lack of the selection by vanishing diffusivity, proving that there are two distinct limit solutions of the transport equation. In this paper we prove the existence of {\em infinitely} (but countably) many solution of the Euler equations in the limit of vanishing viscosity, see Theorem~\ref{thm:main-2}. It is unknown whether this holds for the advection diffusion equation as well. In \cite{2310.02934}, Burczak, Sz\' ekelyhidi, and Wu used convex integration to construct a solution to the Euler equations, such that dissipation anomaly holds for solutions of the advection diffusion equation with any initial data in $H^1$.

Bru\`e and De Lellis \cite{MR4595604} recently constructed the first example of   dissipation anomaly for the Navier-Stokes equations in the sense of Definition~\ref{Def:Dissipation_Anomaly_Intro}. In their scheme a two-dimensional vector field $v^\nu$ is efficiently mixing a scalar $\theta^\nu$, which plays a role of the third component of the constructed solution $u^\nu$ to the \eqref{eq:NSE}. The $2+\frac12$ dimensional flows produce instabilities in two-dimensional flows, see Yudovich \cite{MR1791984}, and can be used to show the enhance dissipation for the 3D Navier-Stokes equations, see Jeong and Yoneda  \cite{MR4297205,MR4375721}.
The mixing constructions of non-unique solutions to the transport equation go back to Aizenman  \cite{Aizenman1978}; see also Depauw \cite{MR2009116}.
In \cite{MR4595604}, the authors use a smooth mixing construction by  Alberti, Crippa, and Mazzucato \cite{MR3904158}. In \cite{BrueEtc}, the authors show dissipation anomaly for a sequence of solutions to the 3D NSE bounded in $L^3_tC^{\alpha}_x$, $\alpha<1/3$. In  \cite{johansson2023nontrivial},  an example of absolutely continuous dissipation anomaly in four dimensions was obtained.  In three dimensions, an example of the absolutely continuous dissipation anomaly is given by Theorem~\ref{thm:main} (see Remark~\ref{rem:AbsContin}).

\section{Main results}

Here we state our main results, including examples of the following phenomena:
\begin{itemize}
\item Total (and partial) loss of energy due to the dissipation anomaly.
\item Dissipation anomaly and anomalous dissipation with absolutely continuous energy limit $E(t)$.
\item Anomalous dissipation without dissipation anomaly.
\item The existence of infinitely many limiting solutions of the Euler equation in the limit of vanishing viscosity.
\item  Dissipation anomaly implies the discontinuity of $\|u(t)\|_{L^2}$ for  quasi-selfsimilar constructions.

\end{itemize}

\begin{theorem} \label{thm:main}
There is a countable family of smooth solutions to \eqref{eq:NSE} $\{u^{\nu}(t)\}_{\nu}$ on time interval $[0,2]$ with force $f^{\nu} \to f$ in $C([0,2];C^\alpha)$ for all $0<\alpha<1$  as $\nu \to 0$, initial data
$u^{\nu}(0) = u_{\mathrm{in}}$ and satisfying the following.

There exists a solution of the Euler equation $u \in C_w([0,2];L^2)$, smooth on $[0,1)\cup(1,2]$, with force $f$, initial data $u(0) =u_{\mathrm{in}}$, such that 
\begin{equation} \label{eq:anom_dis_FirstMainThm}
\int_0^1 (f,u) \, dt =0, \qquad \|u(1)\|_{L^2}^2 = \|u_{\mathrm{in}}\|_{L^2}^2=1, \qquad u(t) = 0  \ \ {\text for} \ \  t \in[1,2],
\end{equation}
and as $\nu \to 0$, the family of the NSE solutions  $u^{\nu}$
converges strongly on $[0,1)$:
\[
u^{\nu} \to u \qquad \text{in} \qquad C([0,t];L^2), \qquad \forall t \in [0,1).
\]

Moreover, the family $\{u^{\nu}(t)\}_{\nu}$ contains the following sequences.

\noindent
{\bf First subfamily with total dissipation anomaly on $[0,1+]$:} 
For any energy level $e\in[0,1]$
there exists a subsequence $\nu^{e}_j \to 0$ as $j\to \infty$ such that $u^{\nu^e_j} \to u$ in $C_w([0,2];L^2)$ and $u^{\nu^e_j}$ dissipates this amount of energy on $[0,1]$  in the limit of vanishing viscosity:
\usetagform{partial_total_or_no}
\begin{equation} \label{Thm1_DA_1}
\lim_{j\to \infty} 2\nu^e_j \int_{0}^{1} \| \nabla u^{\nu^e_j} \|_{L^2}^2 \, dt=e.
\end{equation}
\usetagform{default}
On the other hand, $u^{\nu^e_m}$ dissipates the total energy on any larger interval:
\usetagform{total}
\begin{equation} \label{Thm1_DA_2}
\lim_{j\to \infty} 2\nu_j^e \int_{0}^{t} \| \nabla u^{\nu^e_j} \|_{L^2}^2 \, dt=1, \qquad \forall t\in (1,2].
\end{equation}
\usetagform{default}
In particular, the limiting energy is discontinuous:
\[
E(t)= \lim_{j\to \infty} \|u^{\nu^e_j}(t)\|_{L^2}^2 =
\begin{cases}
\|u(t)\|_{L^2}^2, & t\in[0,1),\\
0, & t\in[1,2].
\end{cases}
\]

\noindent
{\bf Second subfamily with partial dissipation anomaly on $[0,2]$ and absolutely continuous limiting energy $E(t)$:} For any energy level $e\in[0,1)$
there exists a subsequence $\nu^e_j \to 0$ as $j\to \infty$, such that $u^{\nu^e_j} \to u$ in $C_w([0,2];L^2)$ and $u^{\nu^e_j}$ dissipates this amount of energy on $[0,2]$  in the limit of vanishing viscosity:
\usetagform{partial_or_no}
\begin{equation} \label{Thm1_DA_3}
\lim_{j\to \infty} 2\nu^e_j \int_{0}^{2} \| \nabla u^{\nu^e_j} \|_{L^2}^2 \, dt=e.
\end{equation}
\usetagform{default}
Moreover, the limiting energy $E(t)$ is positive and absolutely continuous on $[0,2]$:
\[
E(t):= \lim_{j\to \infty} \|u^{\nu^e_j}(t)\|_{L^2}^2 =
\begin{cases}
\|u(t)\|_{L^2}^2, & t\in[0,1),\\
{\displaystyle \lim_{\tau \to 1-}} \|u(\tau)\|_{L^2}^2 = 1, & t=1,\\ 
\text{absolutely continuous, decreasing},& t\in[1,2],\\
1-e, & t=2.
\end{cases}
\]

In particular, 
\[
\lim_{j \to \infty} \|u^{\nu^e_j}(t) \|_{L^2}^2 \geq 1-e >0 =  \|u(t) \|_{L^2}^2, \qquad t\in[1,2],
\]
and hence $u^{\nu^e_j}(t)$ does not converge strongly in $L^2$ to $u(t)$ for every $t\in[1,2]$.

Also, when $e=0$, there is no {\bf dissipation anomaly} by \eqref{Thm1_DA_3}, while the limiting solution of the Euler equation looses all of its energy exhibiting {\bf anomalous dissipation}   \eqref{eq:anom_dis_FirstMainThm}.
\end{theorem}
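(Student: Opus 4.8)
The plan is to follow the $2\tfrac12$-dimensional scheme of Brué and De Lellis \cite{MR4595604}. I seek $u^\nu$ of the form $u^\nu(x,t)=(v^\nu(x_1,x_2,t),\,\theta^\nu(x_1,x_2,t))$, $x=(x_1,x_2,x_3)$, with $v^\nu$ a smooth divergence-free planar field, $\theta^\nu$ a scalar, and pressure $p^\nu=p^\nu(x_1,x_2,t)$. Then $\nabla\cdot u^\nu=0$, the first two components of \eqref{eq:NSE} reduce to the forced 2D Euler system for $v^\nu$ with planar force $\bar f^\nu=\partial_t v^\nu+(v^\nu\cdot\nabla)v^\nu+\nabla p^\nu$, and the third reduces to the advection--diffusion equation $\partial_t\theta^\nu+v^\nu\cdot\nabla\theta^\nu=\nu\Delta\theta^\nu$ (no scalar source, $f^\nu_3=0$). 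Energy and enstrophy split, $\|u^\nu(t)\|_{L^2}^2=\|v^\nu(t)\|_{L^2}^2+\|\theta^\nu(t)\|_{L^2}^2$ and $\nu\|\nabla u^\nu\|_{L^2}^2=\nu\|\nabla v^\nu\|_{L^2}^2+\nu\|\nabla\theta^\nu\|_{L^2}^2$; I would keep $\|v^\nu\|_{C([0,2];C^\alpha)}$ bounded for every $\alpha<1$ (the most the accelerating-shear construction below allows), so the planar enstrophy is $O(\nu)$ and all of the dissipation anomaly lives in the scalar. I take $u_{\mathrm{in}}=(0,\theta_{\mathrm{in}})$ with $\|\theta_{\mathrm{in}}\|_{L^2}^2=1$, $\int_{\TT^2}\theta_{\mathrm{in}}=0$, so the whole theorem becomes a sharp, uniform-in-$\nu$ passive-scalar estimate.

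The field $v^\nu$ is a concatenation of alternating shear (ACM-type, cf.\ \cite{MR3904158}) mixing stages, glued at $t=1-\delta(\nu)$ and $t=1$ with $\delta(\nu)\to0$. On $[0,1-\delta(\nu)]$: shear stages on intervals $I_n=[t_n,t_{n+1}]$, $t_n\nearrow1-\delta(\nu)$, with frequencies $\lambda_n$ growing geometrically, amplitudes $\sim\lambda_n^{-1+\epsilon_n}$ ($\epsilon_n\downarrow0$ slowly), and lengths $|I_n|\sim\lambda_n^{-\epsilon_n}$ summable, chosen so that $\|v^\nu(t)\|_{C^\alpha}\to0$ for $\alpha<1$ while $\|v^\nu(t)\|_{C^1}\to\infty$ as $t\to1$ (unavoidable for perfect mixing in finite time) and $\|v^\nu(t)\|_{L^2}^2\to0$; for $\nu=0$ this is pure transport, so $\|\theta^0(t)\|_{L^2}^2\equiv1$, $\theta^0(t)\rightharpoonup0$ as $t\to1$, while for $\nu>0$ a drift--diffusion estimate gives that $\theta^\nu$ shadows $\theta^0$ with negligible loss up to the frequency $\Lambda(\nu)\lesssim\nu^{-1/2}$ it has reached at $t=1-\delta(\nu)$, $\Lambda(\nu)$ being a continuous free knob ranging from $\ll\nu^{-1/2}$ up to $\sim\nu^{-1/2}$. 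Since this phase collapses onto $t=1$, one gets $u^\nu\to u$ in $C([0,t];L^2)$ for $t<1$ and $\|u(\tau)\|_{L^2}^2\to1$, $u(\tau)\rightharpoonup0$ as $\tau\to1-$. On a short packet in $[1-\delta(\nu),1]$ and then on $[1,2]$: $C^\alpha$-bounded shears at frequency $\sim\Lambda(\nu)$ that keep $\theta^\nu$ at the diffusive scale and, by the strain--diffusion balance $\tfrac{d}{dt}\|\theta^\nu\|_{L^2}^2\approx-2\,|\nabla v^\nu|\,\|\theta^\nu\|_{L^2}^2$, dissipate a prescribed fraction $a(\nu)\in[0,1]$ before $t=1$ and then force exponential decay $\|\theta^\nu(t)\|_{L^2}^2\approx\|\theta^\nu(1)\|_{L^2}^2\,e^{-2c(\nu)(t-1)}$ on $[1,2]$, with $\|v^\nu(t)\|_{L^2}^2\to0$ there (so the limit is $u\equiv0$ on $(1,2]$); here $a(\nu)$ and $c(\nu)$ are continuous free knobs, and one arranges $\liminf_{\nu\to0}a(\nu)=0$, $\limsup_{\nu\to0}a(\nu)=1$, $\liminf_{\nu\to0}c(\nu)=0$, $\limsup_{\nu\to0}c(\nu)=\infty$, with $c(\nu)\lesssim\log(1/\nu)$ to preserve the $C^\alpha$ bound.

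Granting these estimates, Theorem \ref{thm:main} is a selection argument. \emph{First subfamily, target $e\in[0,1]$:} by the intermediate value theorem pick $\nu^e_j\to0$ with $a(\nu^e_j)\to e$ and $c(\nu^e_j)\to\infty$; then $2\nu^e_j\int_0^t\|\nabla u^{\nu^e_j}\|_{L^2}^2\,ds$, supported in the shrinking window around $t=1$, tends to $e$ at $t=1$ and to $1$ for $t\in(1,2]$ (\eqref{Thm1_DA_1}--\eqref{Thm1_DA_2}), and passing to the limit in \eqref{eq:EE} with $W\equiv0$ gives $E(t)=1-D(t)$, i.e.\ $E=\|u\|_{L^2}^2$ on $[0,1)$, $E(1)=1-e$, $E\equiv0$ on $(1,2]$. \emph{Second subfamily, target $e\in[0,1)$:} pick $\nu^e_j\to0$ with $a(\nu^e_j)\to0$ (no pre-$1$ dissipation) and, choosing $\Lambda(\nu^e_j)$ and the post-$1$ amplitude accordingly, $c(\nu^e_j)\to c^\ast$ with $e^{-2c^\ast}=1-e$; then $\|u^{\nu^e_j}(t)\|_{L^2}^2=\|\theta^{\nu^e_j}(t)\|_{L^2}^2\to1$ on $[0,1]$ and $\to e^{-2c^\ast(t-1)}$ on $[1,2]$, so $E$ is positive, absolutely continuous, decreasing, $E(2)=1-e$, hence $D(2)=e$ (\eqref{Thm1_DA_3}); strong $L^2$ convergence fails on $[1,2]$ since $u\equiv0$ there while $E\ge1-e>0$, and for $e=0$ one has $D(2)=0$ (no dissipation anomaly) while $u$ loses all its energy (anomalous dissipation, \eqref{eq:anom_dis_FirstMainThm}). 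A countable union of all these sequences and a diagonal extraction yields one countable family $\{u^\nu\}_\nu$. Finally one checks that $u$, equal to the inviscid Phase-I shear flow and transported scalar on $[0,1)$ and to $0$ on $(1,2]$, solves the forced Euler equations with $f=(\bar f,0)=\lim_\nu f^\nu$, is smooth off $t=1$, lies in $C_w([0,2];L^2)$, satisfies $\int_0^1(f,u)\,dt=0$ (the shears do no net work, $f^\nu_3=0$), and that $f^\nu\to f$ in $C([0,2];C^\alpha)$ for every $\alpha<1$.

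The main obstacle is precisely the sharp, uniform-in-$\nu$ control of $\theta^\nu$ through the mixing-to-diffusion crossover near $t=1$: that diffusion is a negligible perturbation of transport before the crossover (strong convergence on compacts of $[0,1)$, no premature loss), that the strain--diffusion balance yields the prescribed fraction $a(\nu)$ with the required $\liminf/\limsup$, and that the post-$1$ decay happens at the prescribed tunable rate $c(\nu)$ --- all while keeping $\|v^\nu\|_{C([0,2];C^\alpha)}$ bounded for $\alpha<1$, which is a genuine constraint since finite-time perfect mixing forces $\|v^\nu(t)\|_{C^1}\to\infty$ and so pins down how large the shear amplitudes can be. A secondary, combinatorial difficulty is to make the knobs $\Lambda(\nu)$, $a(\nu)$, $c(\nu)$ jointly controllable along common subsequences, so that one countable family realizes at once the first subfamily for every $e\in[0,1]$ and the second for every $e\in[0,1)$; this is a multi-scale parametrization plus diagonalization once the per-$\nu$ estimates are in hand.
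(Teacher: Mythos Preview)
Your $2\tfrac12$-dimensional architecture and the ACM-type mixing on $[0,1)$ match the paper. The substantive divergence is on $[1,2]$: you keep $C^\alpha$-bounded shears active there and rely on a heuristic strain--diffusion balance with three free knobs $(\Lambda,a,c)$, whereas the paper simply sets the drift $v^m\equiv 0$ on $[1,2]$, so that $\theta^m$ obeys the \emph{heat equation} there. This single simplification eliminates most of what you flag as the main obstacle. Force convergence on $[1,2]$ becomes trivial (the force vanishes). The decay of $\|\theta^m(t)\|_{L^2}^2$ on $[1,2]$ is explicit: split into low modes below a cutoff $\Lambda_m\sim\alpha_m\lambda_m$ (small because $\rho^m(1)$ concentrates at frequency $\lambda_m$ and $\sup_{[0,1]}\|\theta^m-\rho^m\|_{L^2}^2\lesssim\sqrt{\nu_m}\,\lambda_m m^{-1}$ via the standard stability estimate) and high modes (decaying like $e^{-2\nu_m\Lambda_m^2(t-1)}$). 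The only free parameter is then the ratio $\nu_m\lambda_m^2$: taking it $\sim m^{5/2}$ forces total dissipation already on $[0,1]$; $\sim m$ gives none on $[0,1]$ but total on any $[0,t]$ with $t>1$; $\sim k$ fixed or $\sim m^{-1}$ gives the second subfamily. For the latter, convexity of the heat-equation energy on $[1,2]$ plus a uniform equicontinuity bound and Arzel\`a--Ascoli deliver the absolute continuity of $E(t)$ directly---no rate identification $E(t)=e^{-2c^\ast(t-1)}$ is needed or attempted. Continuous dependence of $\|\theta^m(1)\|_{L^2}$ and $\|\theta^m(2)\|_{L^2}$ on $\nu_m$ then interpolates to arbitrary $e$, and the countable family is obtained by taking disjoint viscosity windows $[\nu_{m_j}^{\mathrm l},\nu_{m_j}^{\mathrm h}]$ rather than a diagonal extraction.

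Two minor corrections. Your planar force $\bar f^\nu$ must include $-\nu\Delta v^\nu$ (the first two components solve the forced 2D \emph{Navier--Stokes}, not Euler). And $C^\alpha$ boundedness for $\alpha<1$ does not control $\|\nabla v^\nu\|_{L^2}$; the planar dissipation is negligible not because the enstrophy is $O(1)$ but because $\nu_m\lesssim m^{5/2}\lambda_m^{-2}$ decays exponentially in $m$ while $\int_0^1\|\nabla v^m\|_{L^2}^2\,dt$ grows only polynomially.
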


\begin{remark} In fact, the family of the NSE solutions $u^\nu(t)$ from the above theorem exhibits the total dissipation anomaly on any time interval $[t_1,t_2]$ with $t=1$ in the interior, $t_1<1<t_2$:
\[
\lim_{j\to \infty} 2\nu^e_j \int_{t_1}^{t_2} \| \nabla u^{\nu^e_j} \|_{L^2}^2 \, dt=1, \qquad \forall 0 \leq t_1<1<t_2 \leq 2.
\]
Some dissipation may occur before $t=1$ and some after $t=1$.
In particular, for $e=0$,
\[
\lim_{j\to \infty} 2\nu^e_j \int_{1}^{t_2} \| \nabla u^{\nu^e_j} \|_{L^2}^2 \, dt=1, \qquad \lim_{j \to \infty} \|u^{\nu^e_j}(1)\|_{L^2}^2 =\lim_{t\to 1-}\|u(t)\|_{L^2}^2= 1, \qquad \text{for} \qquad e=0,
\]
i.e., there is no dissipation anomaly before $t=1$ in this case. All the dissipation anomaly occurs after $t=1$.

In the other extreme case $e=1$,
\[
\lim_{j\to \infty} 2\nu^e_j \int_{t_1}^{1} \| \nabla u^{\nu^e_j} \|_{L^2}^2 \, dt=1, \qquad\lim_{j \to \infty} \|u^{\nu^e_j}(1)\|_{L^2}^2 = \|u(1)\|_{L^2}^2 = 0, \qquad \text{for} \qquad e=1,
\]
as  all the available energy dissipates before $t=1$ in this case. The dissipation anomaly is the largest possible on the time interval $[0,1]$, and there is no energy left to dissipate after $t=1$.

Finally, for the second subfamily with partial dissipation anomaly on $[0,2]$ and continuous limiting energy $E(t)$, the dissipation anomaly occurs continuously on the time interval $[1,2]$, where the limiting solution $u(t)$ is zero:
\[
D(t):= \lim_{j \to \infty} 2\nu\int_{0}^t \|\nabla u^{\nu_j^e}(\tau)\|_{L^2}^2 \, d\tau =
\begin{cases}
0, & t\in[0,1],\\
\text{absolutely continuous, increasing},& t\in[1,2],\\
e, & t=2.
\end{cases}
\]

These scenarios are consistent with Cases I - III in the proof of Theorem~\ref{thm:Discont_of_u} (on discontinuity of $u(t)$).
\end{remark}

\begin{remark}
In the above construction on the second half of the time interval
\[
u(t)=0, \qquad u^\nu(t)=(0,0,\theta^\nu(t)), \qquad f^{\nu}(t) = 0,  \qquad t\in[1,2],
\] 
where $\theta^\nu$ satisfies the heat equation
\[
\p_t \theta^\nu  = \nu \Delta \theta^\nu, \qquad \theta^\nu(1) = u^{\nu}_3(1).
\]
\end{remark}

\begin{remark} \label{rem:AbsContin}
Since the limiting energy $E(t)$ is absolutely continuous for the second subfamily in the above theorem, the limit of the dissipation measure is absolutely continuous with respect to the Lebesgue measure.
\end{remark}

Another way to extend the solutions $u^\nu(t)$ of the NSE beyond $t=1$ is to produce an inverse energy cascade via reversing the solution of the transport equation $\rho$ and the velocity $v$ in time.
The following theorem shows that the inverse energy cascade (as $t \to 1+)$ can reduce the loss of energy as solutions cross $t=1$. Moreover, it can produce infinitely many limiting solutions of the Euler equations.

\begin{theorem} \label{thm:main-2}
There is a (countable)  family of smooth solutions to the 3D NSE \eqref{eq:NSE} $\{u^{\nu}(t)\}_{\nu}$ on time interval $[0,2]$ with force $f^{\nu} \to f$ in $C([0,2];C^\alpha)$ for all $0<\alpha<1$ as $\nu \to 0$, initial data
$u^{\nu}(0) = u_{\mathrm{in}}$,
and satisfying the following.

There exist two weak solutions of the Euler equation $u_1, u_2 \in C_w([0,2];L^2)$ smooth on $[0,1)$ and $(1,2]$ with force $f$, initial data $u_1(0)=u_2(0) =u_{\mathrm{in}}$, such that
\begin{enumerate}
\item
$ u_1(t) = u_2(t), \qquad \forall t \in[0,1]$,
\item
$\|u_1(t)\|_{L^2}^2> \|u_2(t)\|_{L^2}^2, \qquad \forall t\in(1,2]$
\item
$\displaystyle \lim_{t\to 1} \|u_1(t)\|_{L^2}^2=\lim_{t\to 1-} \|u_2(t)\|_{L^2}^2>0, \qquad \lim_{t\to 1+} \|u_2(t)\|_{L^2}^2=0, \qquad u_1(1) = u_2(1) = 0,$
\item \label{eq:4th_item_them_main-2}
$u_1(t)$ satisfies the energy equality everywhere except $t=1$:
\[
\|u_1(t)\|_{L^2}^2 =  \|u_1(0)\|_{L^2}^2  + 2\int_{0}^{t} (f,u_1) \, d\tau, \qquad \forall t\in[0,1)\cup(1,2].
\]
\end{enumerate}

\noindent
{\bf Two extreme limiting solutions of the Euler equation:} 
There exist two subsequences  $\nu^1_j \to 0$ and $\nu^2_j \to 0$ as $j\to \infty$ such that $u^{\nu^1_j}$ converges to $u_1$ and $u^{\nu^2_j}$ converges to $u_2$ in the following sense:
\[
u^{\nu^1_j} \to u_1, \qquad u^{\nu^2_j} \to u_2 \qquad \text{in} \qquad C_w([0,2];L^2),
\]
\[
u^{\nu^1_j} \to u_1, \qquad u^{\nu^2_j} \to u_2 \qquad \text{in} \quad  C([0,t];L^2), \ t\in[0,1), \quad \text{and in} \quad C([t,2];L^2), \ t\in(1,2),
\]
Moreover,
\[
u^{\nu^1_j}(1) \nrightarrow u_1(1) \quad \text{in} \quad L^2 , \qquad \text{while} \qquad u^{\nu^2_j} \to u_2, \quad \text{in} \quad C([1,2];L^2).
\]
In particular, the sequence $u^{\nu^1_j}$  does not exhibit the dissipation anomaly (see \eqref{eq:4th_item_them_main-2}) while $u^{\nu^2_j}$ does: 
\usetagform{noDA}
\begin{equation}
\lim_{j\to \infty} 2\nu^1_j \int_{0}^{2} \| \nabla u^{\nu^1_j} \|_{L^2}^2 \, dt=0,
\end{equation}
\usetagform{total}
\begin{equation}
\qquad \lim_{j\to \infty} 2\nu^2_j \int_{0}^{2} \| \nabla u^{\nu^2_j} \|_{L^2}^2 \, dt= \|u_{\mathrm{in}}\|_{L^2}^2 =1.
\end{equation}

\noindent
{\bf Arbitrary dissipation anomaly on $[0,2]$ and infinitely many limiting solutions of the Euler equation:} 
\usetagform{partial_total_or_no}
For any $e\in[0,1]$ there exists a subsequence  $\nu^e_j \to 0$ as $j \to \infty$ with 
\begin{equation}
 \lim_{j\to \infty} 2\nu^e_j \int_{0}^{2} \| \nabla u^{\nu^e_j} \|_{L^2}^2 \, dt=e.
\end{equation}
\usetagform{default}
Moreover, there exist infinitely many solutions of the Euler equation $u_n(t)$, $n=3,4, \dots$ with $u_n(0)=u_{\mathrm{in}}$ coinciding with $u_1(t)$ and $u_2(t)$ on $[0,1]$ and satisfying
\[
\|u_n(2)\|^2_{L^2} <  \|u_{\mathrm{in}}\|_{L^2}^2 =1, \qquad n=3,4,\dots,	
\]
and
\[
\lim_{n\to \infty} \|u_n(2)\|_{L^2}^2 = 1.
\]
Finally, each $u_n(t)$ is attained in the limit of vanishing viscosity, i.e., for every $n\in \mathbb{N}$,
\[
u^{\nu^n_j} \to u_n, \qquad \text{in} \qquad C_w([0,2];L^2),
\]
as $j \to \infty$, for some subsequence  $\nu^n_j \to 0$.
\end{theorem}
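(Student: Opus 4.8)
The plan is to build on the construction behind Theorem~\ref{thm:main}, which I take as given, replacing its post-$t=1$ evolution (there $v^\nu\equiv0$ and $\theta^\nu$ heat-decays on $[1,2]$) by a time-reversed mixing. On $[0,1]$ I keep exactly the $2\tfrac12$-dimensional solutions $u^\nu=(v^\nu,\theta^\nu)$ of Theorem~\ref{thm:main}: $v^\nu$ is a forced divergence-free $2$D velocity with $v^\nu(1)\to0$, mixing the scalar $\theta^\nu=u^\nu_3$ through shear-type stages accumulating at $t=1$; for fixed $\nu$ everything is smooth, $u^\nu\to u$ strongly in $C([0,t];L^2)$ for every $t<1$, $\|u(0)\|_{L^2}^2=1$, $\int_0^1(f,u)\,dt=0$, $\lim_{t\to1^-}\|u(t)\|_{L^2}^2=1$, and $u(t)\rightharpoonup u(1)=0$ as $t\to1^-$ because $\theta$ is mixed to scale $\to0$. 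I also inherit the Theorem~\ref{thm:main} steering device: the diffusive cut-off of the mixing stages can be positioned, by passing to a subsequence, so that $e_1:=\lim_j 2\nu_j\int_0^1\|\nabla u^{\nu_j}\|_{L^2}^2\,dt$ is any prescribed number in $[0,1]$, in which case $\lim_j\|\theta^{\nu_j}(1)\|_{L^2}^2=1-e_1$ and $\theta^{\nu_j}(1)$ is $\theta(0)$ mixed to scale $\approx\sqrt{\nu_j}$ with amplitude multiplied by $\sqrt{1-e_1}$.

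The new ingredient is to extend to $[1,2]$ by running the reversed dynamics: $v^\nu(x,t):=-v^\nu(x,2-t)$ on $[1,2]$, with $f^\nu$ and $\theta^\nu$ reversed accordingly, so the transport part of the equation un-mixes $\theta^\nu$ (an inverse energy cascade); this glues continuously at $t=1$ since $v^\nu(1)\to0$, the reversed force still converges in $C([0,2];C^\alpha)$, and $-v(x,2-t)$ solves the forced $2$D Euler equation because that equation is time-reversible. The key point is that on $[1,2]$ the scalar only climbs back from scale $\sqrt{\nu_j}$ to $O(1)$, so diffusion is cheap there: the reverse stage at scale $2^{-m}$ has duration $\tau_m\to0$ and contributes $\approx 2\nu_j 4^m\tau_m$ to $2\nu_j\int_1^2\|\nabla\theta^{\nu_j}\|_{L^2}^2\,dt$, a summable quantity tending to $0$ as $\nu_j\to0$. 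Hence, for \emph{every} subsequence,
\[
\lim_j 2\nu_j\int_1^2\|\nabla u^{\nu_j}\|_{L^2}^2\,dt=0,\qquad \lim_j\|u^{\nu_j}(2)\|_{L^2}^2=1-e_1 ,
\]
and $u^{\nu_j}$ converges on every $[1+\epsilon,2]$, strongly in $C([1+\epsilon,2];L^2)$, to the inviscid un-mixing, which equals $\big(-v(2-\cdot),\,\sqrt{1-e_1}\,\theta(2-\cdot)\big)$. Gluing the two halves with the value $0$ at $t=1$, using $u(1)=0$, the absence of anomalous work ($f^\nu\to f$ in $C([0,2];C^\alpha)$), and the strong $L^2$ convergence on compact subsets of $[0,1)\cup(1,2)$ to pass the nonlinearity to the limit, I get a weak solution of the forced Euler equations in $C_w([0,2];L^2)$, smooth off $t=1$, with $L^2$ norm $1$ at $t=0$, equal to $0$ at $t=1$, and with $\lim_{t\to1^+}\|u(t)\|_{L^2}^2=1-e_1$; being smooth and solving forced Euler off $t=1$, it satisfies the energy equality there, and since $\int_0^1(f,u)\,dt=0$ and the $[1,2]$ work is the time-reverse of the $[0,1]$ work, it satisfies $\|u(t)\|_{L^2}^2=\|u(0)\|_{L^2}^2+2\int_0^t(f,u)\,d\tau$ on $[0,1)\cup(1,2]$.

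It remains to read off the subsequences. Taking $e_1=0$ yields $u_1$ with no dissipation anomaly on $[0,2]$, $u_1(1)=0$, $\lim_{t\to1^\pm}\|u_1(t)\|_{L^2}^2=1$, and $u^{\nu^1_j}(1)\nrightarrow u_1(1)$ in $L^2$ while $u^{\nu^1_j}\to u_1$ in $C([1+\epsilon,2];L^2)$ for every $\epsilon>0$. Taking $e_1=1$ yields $u_2$ with $\theta^{\nu^2_j}(1)\to0$ strongly, hence $u_2=(-v(2-\cdot),0)$ on $[1,2]$, $u^{\nu^2_j}\to u_2$ in $C([1,2];L^2)$, $\lim_{t\to1^+}\|u_2(t)\|_{L^2}^2=0$, and total dissipation anomaly $\lim_j 2\nu^2_j\int_0^2\|\nabla u^{\nu^2_j}\|_{L^2}^2\,dt=1$. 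Since the $[0,1]$ dynamics is common to all $\nu$, $u_1\equiv u_2$ on $[0,1]$, and items (1)--(4) together with the two extreme limits follow, noting $\|u_1(t)\|_{L^2}^2-\|u_2(t)\|_{L^2}^2=1>0$ on $(1,2]$. For arbitrary $e\in[0,1]$, the subsequence with $e_1=e$ gives $\lim_j 2\nu^e_j\int_0^2\|\nabla u^{\nu^e_j}\|_{L^2}^2\,dt=e$. Finally, choosing any sequence $r_n\uparrow1$ with $r_n\in(0,1)$ and taking the subsequence with $e_1=1-r_n$ produces weak Euler solutions $u_n$, $n=3,4,\dots$, equal to $u$ on $[0,1]$, smooth off $t=1$, with $\|u_n(2)\|_{L^2}^2=r_n<1=\|u_{\mathrm{in}}\|_{L^2}^2$ and $\|u_n(2)\|_{L^2}^2\to1$; as these values are pairwise distinct and differ from $\|u_1(2)\|_{L^2}^2$ and $\|u_2(2)\|_{L^2}^2$, we obtain infinitely many distinct vanishing-viscosity limits from one family with a single initial datum, violating the vanishing-viscosity selection principle.

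The main obstacle is the quantitative analysis of the inverse cascade on $[1,2]$: I must prove that reversing the mixing (exactly reversible only for the transport part) recovers $\|\theta^{\nu_j}(1)\|_{L^2}^2$ up to an $o(1)$ error as $\nu_j\to0$ — equivalently, that un-mixing from the diffusive scale back to $O(1)$ dissipates asymptotically no energy — and that this holds uniformly along \emph{every} subsequence while remaining compatible with the Theorem~\ref{thm:main} steering of $e_1$. Concretely this means re-running the mixing/enhanced-dissipation estimates of Bru\`e--De Lellis \cite{MR4595604} backwards in time, controlling the diffusive loss accrued while the scalar climbs back through the stages by the summable stage durations, and verifying that the reversed forces converge in $C([0,2];C^\alpha)$ for all $\alpha<1$ and do no net work on the limit. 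The remaining points — weak-$\ast$ compactness in $C_w([0,2];L^2)$ from uniform bounds on $\partial_t u^\nu$, passage to the Euler equations, weak continuity at $t=1$, and the energy identities off $t=1$ — are routine once these estimates are in place.
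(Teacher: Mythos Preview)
Your architecture---time-reversing the drift on $[1,2]$ to run an inverse cascade---is exactly right, and your treatment of the two extreme subsequences $u_1$ (take $\nu_m$ small enough that $\|\theta^m-\rho^m\|_{L^\infty(0,2;L^2)}\to0$) and $u_2$ (take $\nu_m$ large enough for total dissipation by $t=1$, so nothing is left to dissipate on $[1,2]$) matches the paper. The arbitrary dissipation anomaly on $[0,2]$ also follows directly from continuous dependence of $\|\theta^m(2)\|_{L^2}$ on $\nu_m$ between these two extremes, so you do not need your claim about the $[1,2]$ dissipation for that part either.

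The genuine gap is in your construction of the infinitely many intermediate limits $u_n$. You assert that for every subsequence the inverse cascade dissipates $o(1)$ energy and that $\theta^{\nu_j}$ converges \emph{strongly} on $(1,2]$ to $\sqrt{1-e_1}\,\tilde\rho(2-\cdot)$, so that $\|u_n(2)\|_{L^2}^2=r_n$ exactly. But $\theta^{\nu_j}(1)$ is not, in any quantitative sense, a scaled copy of $\rho^m(1)$ once $e_1>0$: diffusion does more than rescale, and the reversed drift un-mixes $\rho^m(1)$, not $\theta^{\nu_j}(1)$. Whether strong convergence holds for these intermediate sequences is precisely the open question the paper flags after the theorem statement; your ``main obstacle'' is not a routine re-running of the Bru\`e--De Lellis estimates but an unresolved problem. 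The paper sidesteps it entirely: for each $\alpha\in(0,1)$ it picks a specific $\nu_m^\alpha\sim\alpha^4 m^2\lambda_m^{-2}$ so that $\sup_{[0,2]}\|\theta^m-\rho^m\|_{L^2}\le\alpha$, which after passing to a weak limit $\bar\theta$ gives the lower bound $\|\bar\theta(t)\|_{L^2}\ge 1-\alpha$ via low-mode stability; separately, a high-mode lower bound on $\|\nabla\theta^m\|_{L^2}$ on $[t_m,1]$ forces genuine dissipation, yielding $\|\bar\theta(t)\|_{L^2}^2\le 1-k\alpha^4(1-\alpha)(\sqrt\alpha-\alpha)^2<1$ on $(1,2]$. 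Sending $\alpha\to0$ along a sequence produces infinitely many distinct energies (hence infinitely many distinct Euler limits) without ever identifying $\bar\theta$ or proving strong convergence.
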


\noindent
{\bf Open question.}
Does $u^{\nu^n_j}$ converges to $u_n$ strongly in $L^2$ for $n\geq 3$?

\bigskip

Finally, we prove that the dissipation anomaly implies the discontinuity of the limit solution $u(t)$ in $L^2$
provided a certain frequency localization property is satisfied, reminiscent of Tao's delay mechanism in \cite{MR3486169}. 
Such a frequency localization property is enjoyed by quasi-selfsimilar constructions.  This conditionally answers Question~(\ref{Q:Cont_u}) in the Introduction. 

\begin{theorem} \label{thm:discont-intro}
Let $u^{\nu}(t)$ be a family of weak solutions to \eqref{eq:NSE} satisfying the energy equality with viscosity $\nu \to 0$ and force $f^{\nu} \to f$ in $L^1(0,1;L^2)$, converging weakly in $L^2$ to $u \in L^\infty(0,1;L^2)$
\[
u^{\nu} \to u \qquad \text{in} \qquad C_w([0,1];L^2),
\]
converging strongly at $t=0$
\[
u^{\nu}(0) \to u(0) \qquad \text{in} \qquad L^2,
\]
and exhibiting the dissipation anomaly, i.e.,
\begin{equation} \label{eq:diss_anomaly}
\limsup_{\nu \to 0} \nu \int_0^1 \| \nabla u^{\nu} \|_{L^2}^2 \, dt >0.
\end{equation}
Assume also that there are constants $c>0$, $\alpha>1$ such that for every $m\in \NN$ and $t\in[0,1]$ there exists $\tilde{q}(\nu,t)$ with the following localization  property:
\begin{equation} \label{eq:kernel-theorem}
\|\Delta_q u^{\nu}(t)\|_{L^2} \leq c \lambda_{|q-\tilde q(\nu,t)|}^{-\alpha}, 
\end{equation}
for all $q \in \NN$ and all $\nu$, where $\Delta_qu^\nu$ is the Littlewood-Pale projection of $u^\nu$ onto frequencies of size $\sim \lambda_q:=2^q$. Then $u(t)$ is discontinuous in $L^2$ at some $t\in[0,1]$.
\end{theorem}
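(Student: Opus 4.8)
The plan is to argue by contradiction: suppose $u$ is continuous in $L^2$ on all of $[0,1]$. Combined with the strong convergence $u^\nu(0)\to u(0)$ and the weak convergence $u^\nu\to u$ in $C_w([0,1];L^2)$, I first want to extract a quantitative statement about where the dissipation anomaly can occur. From the energy equality \eqref{eq:EE}, the no-anomalous-work assumption $f^\nu\to f$ in $L^1(0,1;L^2)$, and $u^\nu(0)\to u(0)$ strongly, one gets that along a subsequence realizing the $\limsup$ in \eqref{eq:diss_anomaly}, the quantity $2\nu\int_0^t\|\nabla u^\nu\|_{L^2}^2\,d\tau$ converges (say as a monotone bounded function, after a diagonal/Helly argument) to a limiting nondecreasing function $D(t)$ with $D(1)>0$, and by \eqref{eq:Intro_Enegry_Balance} one has $\|u(t)\|_{L^2}^2 \le \|u(0)\|_{L^2}^2 + W(t) - D(t)$, while also $E(t) = \|u(0)\|_{L^2}^2 + W(t) - D(t)$. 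If $\|u(t)\|_{L^2}$ is continuous, and it equals $E(t)$ somewhere (it does at $t=0$), the gap $E(t)-\|u(t)\|_{L^2}^2 \ge 0$ measures the defect of strong convergence; continuity of $\|u\|_{L^2}^2$ forces $D$ to carry the jump structure.

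Next I would localize the dissipation in frequency using \eqref{eq:kernel-theorem}. The key point of the hypothesis is that at each time $t$ and viscosity $\nu$, the Littlewood-Paley energy spectrum of $u^\nu(t)$ is concentrated in a bounded band around a single "active frequency" $\lambda_{\tilde q(\nu,t)}$, with super-summable ($\alpha>1$) Gaussian-type tails away from it. This means $\nu\|\nabla u^\nu(t)\|_{L^2}^2 \sim \nu \lambda_{\tilde q(\nu,t)}^2 \|u^\nu(t)\|_{L^2}^2$ up to constants, so the dissipation rate at time $t$ is comparable to $\nu\lambda_{\tilde q(\nu,t)}^2$ times the (bounded) energy. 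For the time-integrated dissipation to stay bounded below, the set of times where $\nu\lambda_{\tilde q(\nu,t)}^2$ is large must have non-vanishing measure; but on such a time set the solution $u^\nu(t)$ lives essentially at frequency $\lambda_{\tilde q(\nu,t)} \gtrsim \nu^{-1/2}\to\infty$, hence $u^\nu(t)\rightharpoonup 0$ (more precisely, loses mass weakly) there, forcing $\|u(t)\|_{L^2}^2 < E(t)$ on a set of times, and the jump from the strong-convergence region. The delay mechanism à la Tao enters because $\tilde q(\nu,t)$ varies continuously (or boundedly) in $t$: the active frequency cannot instantaneously jump from bounded to $\gg\nu^{-1/2}$, so there is a transition interval of times where $\lambda_{\tilde q(\nu,t)}$ sweeps through the dissipative scale $\nu^{-1/2}$, and on that interval the dissipation $\nu\lambda_{\tilde q}^2\sim 1$ integrates to a loss — but crucially the \emph{time length} of this sweep is bounded below independent of $\nu$ (that is what $\alpha>1$ and the $\lambda_k=2^k$ geometric spacing buy us, via a counting argument over the finitely many dyadic scales near $\nu^{-1/2}$ that must each be occupied for a definite time to accumulate order-one dissipation). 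Thus the energy loss is concentrated on an interval of positive length, which combined with monotonicity of $D$ and the energy identity contradicts continuity of $\|u(t)\|_{L^2}^2$ unless $\|u\|_{L^2}^2$ actually drops — i.e., it cannot be continuous.

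More carefully, the contradiction I would set up is: continuity of $\|u(t)\|_{L^2}^2$ together with $\|u(0)\|_{L^2}^2=E(0)$ and the energy balance $E(t)=\|u(0)\|_{L^2}^2+W(t)-D(t)$ would let me write $\|u(t)\|_{L^2}^2 = E(t) - (E(t)-\|u(t)\|_{L^2}^2)$; I would show the "strong defect" $\delta(t):=E(t)-\|u(t)\|_{L^2}^2$ is itself forced to be discontinuous (it jumps where the active frequency crosses the dissipative scale), and since $E(t)$ and $\|u(t)\|_{L^2}^2$ can't both be continuous while their difference jumps, and $E=\|u(0)\|^2+W-D$ has the same regularity as $D$ which is monotone (hence has at most countably many jumps but could be continuous), the burden falls on showing $\delta$ genuinely jumps. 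For that I use the localization: before the crossing time, frequencies are $O(1)$ so $\|\Delta_q u^\nu\|$ for fixed $q$ converges to $\|\Delta_q u\|$ and (by the tail bound) $u^\nu(t)\to u(t)$ strongly, i.e. $\delta(t)=0$; after enough dissipation has occurred the active frequency is pushed high and a definite fraction of the energy sits at frequencies $\to\infty$, giving $\delta(t)>0$. The bounded-variation-in-time of $\tilde q$ (the content of the "delay"/localization regularity, which I'd need to extract from \eqref{eq:kernel-theorem} or assume is implicit in the quasi-selfsimilar structure) prevents this transition from being spread over vanishing time, but it also cannot be truly instantaneous for a fixed $\nu$; the subtle point is taking $\nu\to 0$ so that the transition interval shrinks and produces a genuine jump of $\delta$ in the limit.

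The main obstacle I anticipate is precisely this last step: turning the $\nu$-dependent, possibly gradual frequency sweep into a genuine discontinuity of the limiting profile. One has to rule out the scenario where $\delta(t)$ (and hence $\|u(t)\|_{L^2}^2$) rises continuously as $\nu\to 0$ — which is exactly the absolutely continuous dissipation anomaly scenario realized in Theorem~\ref{thm:main} for the \emph{other} subfamily. So the localization hypothesis \eqref{eq:kernel-theorem} must be doing real work to exclude that: I expect the argument hinges on the fact that a \emph{single} moving bump of width $O(1)$ in log-frequency, passing through the dissipative window of logarithmic width $\sim \frac12\log(1/\nu)$, spends comparable time at each of the $\sim \log(1/\nu)$ intermediate dyadic scales, and at each such scale $\nu\lambda_q^2$ ranges over a bounded range — so the total dissipated energy is $\int \nu\lambda_{\tilde q(\nu,t)}^2 \|u^\nu\|^2\,dt$, and demanding this be $\ge c>0$ while $\|u^\nu\|^2\le C$ pins the \emph{Lebesgue measure} of $\{t: \nu\lambda_{\tilde q(\nu,t)}^2 \ge c/(2C)\}$ below but — here is the crux — the localization forbids the active frequency from \emph{lingering} below the dissipative scale once it has started climbing, because a quasi-selfsimilar structure ties the growth rate of $\tilde q$ to a fixed time scale. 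Making this quantitative, i.e. proving a lower bound on how fast $\lambda_{\tilde q}$ must cross once crossing begins, is where the real technical content lies; I would isolate it as a lemma of the form "under \eqref{eq:kernel-theorem}, if $\|u^\nu(t_0)\|_{L^2}^2 \to \|u(0)\|_{L^2}^2$ and dissipation anomaly holds, then $\liminf_{\nu}\big(\|u(0)\|^2 - \liminf_t \|u^\nu(t)\|^2\big)>0$ with the $\liminf$ over $t$ in any fixed right-neighborhood of the crossing time," and the rest is bookkeeping with the energy identity.
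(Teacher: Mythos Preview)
Your outline has the right broad shape (argue by contradiction, use the localization to tie dissipation to the position of the active frequency $\tilde q$), and you correctly identify the heuristic $\nu\|\nabla u^\nu(t)\|_{L^2}^2 \sim \nu\lambda_{\tilde q(\nu,t)}^2$, which is essentially the content of the paper's Lemma~\ref{lemma1}. But there is a genuine gap: the argument you sketch hinges on tracking the \emph{time-dynamics} of $\tilde q(\nu,\cdot)$ --- you speak of ``bounded-variation-in-time of $\tilde q$'', of the frequency ``sweeping'' through the dissipative window, and of needing ``a lower bound on how fast $\lambda_{\tilde q}$ must cross once crossing begins''. None of this is assumed in \eqref{eq:kernel-theorem}, which is purely a \emph{pointwise-in-$t$} statement with no regularity of $\tilde q$ in time whatsoever, and none of it is used in the paper's proof. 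Your worry that the absolutely continuous scenario of Theorem~\ref{thm:main} must be excluded by a dynamical argument is therefore misplaced.

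The key structural insight you are missing is a pointwise \emph{dichotomy} (the paper's Lemma~\ref{lemma2}): at any fixed time $T$, either $\tilde q(\nu,T)$ stays bounded along the sequence --- in which case the tail bound \eqref{eq:kernel-theorem} gives uniform decay of high modes and upgrades weak to strong convergence, $u^\nu(T)\to u(T)$ in $L^2$ --- or $\tilde q(\nu,T)\to\infty$ along a subsequence, in which case $\|\Delta_q u^\nu(T)\|_{L^2}\to 0$ for every fixed $q$ and hence the weak limit satisfies $u(T)=0$. In short: for each $t$, either $u(t)=0$ or strong convergence holds at $t$. With this dichotomy in hand, no time-regularity of $\tilde q$ is needed. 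The paper then argues: dissipation anomaly forces $u(T)=0$ at some $T$ (Lemma~\ref{lemma1}); one then looks at the boundary of the zero set $\{t:u(t)=0\}$, uses strong convergence (from the dichotomy) on the complement, and combines the energy equality with the assumed continuity of $\|u(t)\|_{L^2}$ to walk the failure of strong convergence back to $t=0$, contradicting $u^\nu(0)\to u(0)$ strongly. Your proposed lemma at the end, and the $\delta(t)$ bookkeeping, are attempting to substitute for this dichotomy but never recover it; replacing the sweep/delay heuristics by the clean statement ``$u(t)\ne 0 \Rightarrow$ strong convergence at $t$'' is what makes the argument go through.
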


\begin{remark}
In fact, the above result also holds for Leray-Hopf solutions, i.e., weak solutions satisfying the energy inequality starting from almost all initial data.
\end{remark}

\begin{remark}
As we show in Theorem~\ref{thm:Discont_of_u}, the localization assumption can be weakened as follows:
\[
u^{\nu}=u^\nu_{\mathrm{Loc}} + u^\nu_{\mathrm{Ons}},
\]
where $u^\nu_{\mathrm{Loc}}$ satisfies \eqref{eq:kernel-theorem} and $u^\nu_{\mathrm{Ons}}$ converges to a function in the Onsager space and does not exhibit dissipation anomaly, see Assumption~\ref{assumption}.
\end{remark}

\section{Perfect mixing} \label{sec:Mixing}

For the constructions, as in  \cite{MR4595604}, we will take advantage of $2+\frac12$ dimensional solutions of the Euler equations
\[
u(x,t)=(v, \rho),
\]
where a 2D divergence free vector field $v$ efficiently mixes a scalar $\rho$ satisfying the transport equation
\[
\p_t \rho + v \cdot \nabla \rho =0. 
\]
More precisely, we will be using the smooth mixing construction by Alberti, Crippa, and Mazzucato \cite{MR3904158},
where $\rho \in C^\infty([0,1) \times \TT^2)$, $v \in C^\infty([0,1) \times \TT^2; \RR^2)$, and
\[
\rho(t) \rightharpoonup 0 \quad \text{weakly in } L^2 \text{ as }  t \to 1-.
\]
Moreover, $ v \in L^\infty(0,1; C^{\alpha})$ for all $\alpha \in (0,1)$. Denote the frequency $\l_n = 5^n$.
The following result was proved in \cite{MR3904158} (see Theorem 4.1 in \cite{MR4595604}). 

\begin{theorem} \label{thm:v_n_rho_n}
There exist smooth solutions to the transport equation $\rho_n \in C^\infty([0,1]\times \TT^2 )$ with smooth drift  $v_n \in C^\infty([0,1] \times \TT^2 ; \RR^2)$, such that, for every $n\in \NN$,
\begin{enumerate}
\item $\|\p_t^k v_n\|_{L^\infty(0,1;C^\alpha(\TT^2))} \leq C(\alpha,k) \l_n^{\alpha-1}$, for every $\alpha \geq 0$ and $k \in \NN$.
\item $\rho_n(t)$ has zero mean and $\|\rho_n(t)\|_{L^2} =1$ for every $t\in[0,1]$, and
\begin{equation} \label{eq:building_blocks_estimates}
\|\rho_n(t)\|_{L^\infty} \leq 10, \qquad \|\nabla \rho_n(t)\|_{L^\infty} \leq C \l_n, \qquad \|\rho_n(t)\|_{\dot H^{-1}} \leq C\l_n^{-1},
\end{equation}
for some absolute constant $C$.
\item $\rho_n(1)=\rho_{n+1}(0)$ for every $n \in \NN$.
\end{enumerate}
\end{theorem}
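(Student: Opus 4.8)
The plan is to reduce the entire statement, by one spatial rescaling, to the construction of a single \emph{unit mixing module} on $[0,1]\times\TT^2$, and then to build that module from smoothed alternating shear flows in the spirit of Alberti--Crippa--Mazzucato \cite{MR3904158}. Concretely, suppose we have a divergence-free $W\in C^\infty([0,1]\times\TT^2;\RR^2)$ and a smooth scalar $R\in C^\infty([0,1]\times\TT^2)$ with $\p_t R + W\cdot\nabla R =0$, where $R(\cdot,t)$ has zero mean, $\|R(\cdot,t)\|_{L^2}=1$, $\|R(\cdot,t)\|_{L^\infty}\le 10$ and $\|\nabla R(\cdot,t)\|_{L^\infty}\le C_0$ for all $t\in[0,1]$, and with the self-similar matching $R(x,1)=R(5x+b,0)$ for some $b\in\RR^2$. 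Then set $v_n(x,t):=\l_n^{-1}W(\l_n x,t)$ and $\rho_n(x,t):=R(\l_n x,t)$, with $R(\cdot,0)$ taken as the seed above so that, after absorbing the shift $b$, $\rho_n(x,1)=R(\l_{n+1}x,0)=\rho_{n+1}(x,0)$; this is item~(3). Since $\l_n=5^n\in\NN$, the dilated functions $W(\l_n\cdot,t)$, $R(\l_n\cdot,t)$ are $\TT^2$-periodic with the same $L^p$ norms as $W,R$, so $\|\rho_n(\cdot,t)\|_{L^2}=1$, the zero-mean and $\|\rho_n(\cdot,t)\|_{L^\infty}\le 10$ conditions persist, and $\|\nabla\rho_n(\cdot,t)\|_{L^\infty}=\l_n\|\nabla R(\cdot,t)\|_{L^\infty}\le C_0\l_n$, giving item~(2). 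The transport equation is scale-invariant, so $\p_t\rho_n+v_n\cdot\nabla\rho_n=0$. For item~(1), $\p_t^k v_n(x,t)=\l_n^{-1}\p_t^k W(\l_n x,t)$, and dilation by $\l_n\ge1$ multiplies a $C^\alpha$ norm by at most $\l_n^\alpha$, so $\|\p_t^k v_n(\cdot,t)\|_{C^\alpha}\le \l_n^{\alpha-1}\sup_{t}\|\p_t^k W(\cdot,t)\|_{C^\alpha}=:C(\alpha,k)\,\l_n^{\alpha-1}$. Finally, the $\dot H^{-1}$ bound in \eqref{eq:building_blocks_estimates} is automatic: on $\TT^2$ dilation by $\l_n$ scales the homogeneous $\dot H^{-1}$ norm exactly by $\l_n^{-1}$, while $\|R(\cdot,t)\|_{\dot H^{-1}}\lesssim\|R(\cdot,t)\|_{L^2}=1$ for mean-zero functions; hence $\|\rho_n(\cdot,t)\|_{\dot H^{-1}}\le C\l_n^{-1}$.

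It remains to construct $(W,R)$. Take $R(\cdot,0)$ to be a fixed smooth mean-zero seed whose level sets are stripe-like at the unit scale, normalized so $\|R(\cdot,0)\|_{L^2}=1$ (a reasonably spread-out choice then has $\|R(\cdot,0)\|_{L^\infty}\le 10$). Let $W$ be a finite composition of alternating horizontal and vertical smooth shear flows, each supported in its own subinterval of $[0,1]$ with a fixed smooth temporal profile (so $W\in C^\infty_{x,t}$ with all $\p_t^k$ controlled), and each localized by smooth cutoffs supported away from the periodic seams on the $5$-adic grid of subsquares, so that each shear is a diffeomorphism of $\TT^2$ and $\Div W(\cdot,t)=0$. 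Following the alternating-shear mechanism of \cite{MR3904158}, the amplitudes and their arrangement on the subsquares can be chosen so that the time-$1$ flow map $\Phi$ of $W$ refines the level sets of $R$ by exactly the factor $5$ and reproduces the seed: $R(x,1)=R(\Phi_1^{-1}(x),0)=R(5x+b,0)$. Because the evolution is pure transport, $\|R(\cdot,t)\|_{L^p}$ is conserved for all $t$, and $\|\nabla R(\cdot,t)\|_{L^\infty}\le\|D\Phi_t^{-1}\|_{L^\infty}\|\nabla R(\cdot,0)\|_{L^\infty}\le e^{\int_0^1\|DW(\cdot,s)\|_{L^\infty}\,ds}\|\nabla R(\cdot,0)\|_{L^\infty}=:C_0$ uniformly on $[0,1]$, consistent with the endpoint identity $\|\nabla R(\cdot,1)\|_{L^\infty}=5\|\nabla R(\cdot,0)\|_{L^\infty}$.

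The main obstacle is the one resolved in \cite{MR3904158}: reconciling, \emph{within a single step}, global smoothness of the flow on $\TT^2$ — shear flows are naturally discontinuous across the periodic seam, so the required cutoffs leave thin unmixed strips — with an \emph{exact} self-similar matching $R(x,1)=R(5x+b,0)$ (needed so that the gluing $\rho_n(1)=\rho_{n+1}(0)$ is literal rather than approximate) and with only bounded-factor growth of $\|\nabla R\|_{L^\infty}$. The fix is a recursive self-similar arrangement: the same smoothed-shear module acts simultaneously on every $5$-adic subsquare, so the unmixed strips at one scale are absorbed at the next, and the seed $R(\cdot,0)$ is chosen as the fixed point of this refinement so that the endpoint identity holds on the nose. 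I would follow that blueprint directly, since this is exactly the content of \cite{MR3904158}, quoted here as Theorem~\ref{thm:v_n_rho_n}; the only additional bookkeeping is that \emph{all} $\p_t^k$ of $W$ obey the same $\l_n^{\alpha-1}$ scaling, which is immediate because time enters only through the fixed smooth profiles. I would also note that the genuine exponential (here geometric-in-$n$) mixing estimate of \cite{MR3904158} is \emph{not} needed for this theorem: the stated bound $\|\rho_n(\cdot,t)\|_{\dot H^{-1}}\le C\l_n^{-1}$ follows purely from $\l_n\in\NN$, the $L^2$ normalization, and the embedding $L^2(\TT^2)\hookrightarrow\dot H^{-1}$; mixing proper is invoked only downstream, when the $\rho_n$ are glued and accelerated in time so that $\rho(t)\rightharpoonup 0$ as $t\to 1-$.
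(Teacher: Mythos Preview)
Your proposal is correct and, in fact, goes further than the paper: the paper does not prove Theorem~\ref{thm:v_n_rho_n} at all but simply quotes it from Alberti--Crippa--Mazzucato \cite{MR3904158} (in the formulation of \cite[Theorem~4.1]{MR4595604}). Your reduction to a single unit module $(W,R)$ with the self-similar endpoint condition $R(x,1)=R(5x,0)$ and the subsequent rescaling $\rho_n(x,t)=R(\lambda_n x,t)$, $v_n(x,t)=\lambda_n^{-1}W(\lambda_n x,t)$ is exactly how the construction in \cite{MR4595604} is organized, and your observation that the $\dot H^{-1}$ bound follows purely from integer dilation of a mean-zero $L^2$ function (rather than from any genuine mixing estimate) is a correct and worthwhile remark. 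One small point: the phrase ``fixed point of this refinement'' is slightly misleading, since the seed $R(\cdot,0)$ in \cite{MR3904158} is an explicit smooth function (a smoothed checkerboard-type profile), not an abstract fixed point; what is actually arranged is the endpoint identity $R(\cdot,1)=R(5\,\cdot,0)$, and the shift $b$ can indeed be taken in $\ZZ^2$ (or zero) so that it disappears on $\TT^2$.
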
 
In all the constructions the smooth initial data for solutions of \eqref{eq:NSE} and the Euler equations will have $\rho_1(0)$ as the third component:
\begin{equation} \label{eq:ID}
u_{\mathrm{in}} := (0,0, \rho_{\mathrm{in}}), \qquad \rho_{\mathrm{in}} := \rho_1(0) \in C^{\infty}, \qquad \|u_{\mathrm{in}} \|_{L^2}=\|\rho_{\mathrm{in}} \|_{L^2} =1.
\end{equation}

Following  \cite{MR4595604}, the rescaled velocities and densities are glued in time to obtain smooth solutions of the transport equation on $[0, 1]$
\begin{equation} \label{eq:def_tilde_v}
\tilde v^{m}(x,t)=\sum_{n=0}^{m} \eta'(t) \chi_{[t_n,t_{n+1})}(\eta(t)) \frac{1}{t_{n+1}-t_n}v_n\left(x,\frac{\eta(t)-t_n}{t_{n+1}-t_n}\right),
\end{equation}
\begin{equation} \label{eq:def_tilde_rho}
\tilde \rho^{m}(x,t)= \sum_{n=0}^{m}  \chi_{[t_n,t_{n+1})}(\eta(t)) \rho_n\left(x,\frac{\eta(t)-t_n}{t_{n+1}-t_n}\right) + \chi_{[t_{m+1},1]} \rho_m(x,1),
\end{equation}
where $t_n=1-(n+1)^{-2}$ and $\eta : [0, 1] \to [0, 1]$ is some smooth non-decreasing function satisfying
\begin{enumerate}
\item $\eta(t_n)=t_n$ for any $n \in \NN$;
\item $\eta^{(k)} (t_n)=0$ for any $n,k \in \NN$, $k \geq 1$;
\item $|\eta^{(k)}(t)| \chi_{[t_n, t_{n+1})} \leq C(k) n^{5k}$ for any $n,k\in \NN$, and $t\in[0,1]$.
\end{enumerate}
Clearly $\tilde \rho^m$ enjoys the same estimates \eqref{eq:building_blocks_estimates} as the last building block in the sum 
\begin{equation} \label{eq:estimates_rho}
\| \tilde \rho^m(t)\|_{L^\infty} \leq 10, \qquad \|\nabla \tilde\rho^m(t)\|_{L^\infty} \leq C \l_m, \qquad \|\tilde\rho^m(t)\|_{\dot H^{-1}} \leq C\l_m^{-1},
\end{equation}
for some absolute constant $C$.

The limits as $m \to \infty$ of $(\tilde v^{m}, \tilde \rho^{m})$ provides an {\em explicit} solution of the Euler equations that looses all the energy at time $t=1$.  More precisely, define
\begin{equation} \label{eq:def_tilde_v-limit}
\tilde v(x,t)=\sum_{n=0}^{\infty} \eta'(t) \chi_{[t_n,t_{n+1})}(\eta(t)) \frac{1}{t_{n+1}-t_n}v_n\left(x,\frac{\eta(t)-t_n}{t_{n+1}-t_n}\right),
\end{equation}
\begin{equation} \label{eq:def_tilde_rho-limit}
\tilde \rho(x,t)= \sum_{n=0}^{\infty}  \chi_{[t_n,t_{n+1})}(\eta(t)) \rho_n\left(x,\frac{\eta(t)-t_n}{t_{n+1}-t_n}\right),
\end{equation}
which is a smooth solution to the transport equation on $[0,1)$:
\begin{equation} 
\p_t \tilde \rho + \tilde v \cdot \nabla \tilde \rho =0,
\end{equation}
with the initial data $\tilde \rho(0)= \rho_{\mathrm{in}}$ defined in \eqref{eq:ID}. Now define a $2+\frac12$ dimensional velocity
\begin{equation} \label{eq:Def_u}
u(x,t)=(\tilde v(x_1,x_2,t),\tilde \rho(x_1,x_2,t)).
\end{equation}
It is easy to see that $u(t)$ is a smooth solution of the Euler equations on $[0,1)$ with $u(0)=u_{\mathrm{in}}$ defined in \eqref{eq:ID} and the force $f =(g,0)$, where
\begin{equation} 
g:= \p_t \tilde v +(\tilde v \cdot \nabla)\tilde v.
\end{equation}
Note that $u(t)$ blows-up as $t \to 1-$ as
\begin{equation} \label{eq:u_weak_limit}
\lim_{t \to 1-} \|u(t)\|_{L^2} = 1, \qquad u(t) \rightharpoonup 0 \text{ weakly in }  L^2  \text{ as }  t \to 1-.
\end{equation}
We will later extend $u(t)$ to a weak solution on $[0,2]$ in two different ways explicity, and, among other things, prove the existence of infinitely many other solutions of the Euler equations with the same initial data and force (see Theorem~\ref{thm:main-2}).

The main goal of this paper is to study the behavior of solutions to the \eqref{eq:NSE} with initial data $u_{\mathrm{in}}$ by analyzing the evolution of $\rho_{\mathrm{in}}$ under the advection-diffusion equation in the limit of vanishing viscosity. We consider solutions to the \eqref{eq:NSE} that also enjoy the $2+\frac12$ dimensional structure
\[
u^{m}(x,t)=(\tilde v^{m}(x_1,x_2,t),\theta^m(x_1,x_2,t)),
\]
where $\theta^m$ satisfies the advection diffusion equation
\begin{equation} 
\p_t \theta^m + \tilde v^m \cdot \nabla \theta^m = \nu_m \Delta \theta^m,
\end{equation}
with $\theta^m(0)=\rho_{\mathrm{in}}$. Indeed, $u^{m}(t)$ satisfies \eqref{eq:NSE} with force
\begin{equation} \label{eq:def_g^m}
g^m:= \p_t \tilde v^m +(\tilde v^m \cdot \nabla)\tilde v^m - \nu_m \Delta \tilde v^m.
\end{equation}
A direct computation (see Lemma~5.1 in \cite{MR4595604} with a particular choice $\nu_m =m^{10}\l_m^{-2}$) shows that for any sequence $\nu_m \lesssim m^s\l_m^{-2}$ for some $s \in \RR$, the NSE force $g^m$ converses to the Euler force $g$ in 
in H\"older spaces:
\[
g^m \to g, \qquad \text{in} \quad C([0,1];C^\alpha(\TT^2)), \quad \forall \alpha \in (0,1).
\]
In particular, such a force does not affect the strength of the energy cascade or the dissipation anomaly mechanism as there is no anomalous work done by the force on high modes (see the discussion in Introduction).

We will use the following simple lemmas. The first one ensures that the work of the limiting force is zero, and the energy loss due to the dissipation anomaly for the solutions of the NSE and advection-diffusion equations are the same in all the constructions.

\begin{lemma} \label{l:DA_for_u=DA_for_th}
Let $u^\nu=(v^\nu,\theta^{\nu})$ be a family of solutions to the 3D NSE \eqref{eq:NSE} on $[0,2]$ with initial data
$u^{\nu}(0) = u_{\mathrm{in}}$,  $v^\nu \in C^\infty([0,2] \times \TT^2; \RR^2)$, $\theta^\nu \in C^\infty([0,2] \times \TT^2 )$, force $f^{\nu} \to f$ in $L^1(0,2;L^2(\TT^3))$, and such that  $\{v^\nu\}_\nu$ is bounded in $L^2(0,2;H^1)$ and  $v^\nu(t) =0$ for all $m$ and $t \in \mathcal{S} \subset [0,2]$. Then

\begin{equation} \label{eq:DA_for_u=DA_for_th}
\lim_{\nu\to 0} \left( \nu \int_{0}^{t} \| \nabla u^{\nu} \|_{L^2}^2 \, d\tau -  \nu \int_{0}^{t} \| \nabla \theta^{\nu} \|_{L^2}^2 \, d\tau \right)=0, \qquad \forall t \in \mathcal{S},
\end{equation}
and
\begin{equation} \label{eq:Zero_Work}
\int_{0}^{t}  (f,u) \, d\tau =0, \qquad \forall t \in \mathcal{S}.
\end{equation}
\end{lemma}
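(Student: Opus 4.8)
The plan is to exploit the $2+\frac12$-dimensional splitting $u^\nu=(v^\nu,\theta^\nu)$. Since both $v^\nu$ and $\theta^\nu$ depend only on $(x_1,x_2)$, the Dirichlet energy decouples pointwise in time, $\|\nabla u^\nu(\tau)\|_{L^2}^2=\|\nabla v^\nu(\tau)\|_{L^2}^2+\|\nabla\theta^\nu(\tau)\|_{L^2}^2$, so the bracket in \eqref{eq:DA_for_u=DA_for_th} equals $\nu\int_0^t\|\nabla v^\nu\|_{L^2}^2\,d\tau$. As $\{v^\nu\}$ is bounded in $L^2(0,2;H^1)$ by some constant $C$, this quantity is at most $\nu C\to 0$, which already gives \eqref{eq:DA_for_u=DA_for_th} (in fact for every $t\in[0,2]$, not merely $t\in\mathcal S$).

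For \eqref{eq:Zero_Work} I would compare two exact energy balances. Testing the advection--diffusion equation $\partial_t\theta^\nu+v^\nu\cdot\nabla\theta^\nu=\nu\Delta\theta^\nu$ against $\theta^\nu$ and using $\nabla\cdot v^\nu=0$ (all functions are smooth, so this is rigorous) yields
\[
\|\theta^\nu(t)\|_{L^2}^2=\|\rho_{\mathrm{in}}\|_{L^2}^2-2\nu\int_0^t\|\nabla\theta^\nu\|_{L^2}^2\,d\tau,\qquad t\in[0,2],
\]
where $\theta^\nu(0)=\rho_{\mathrm{in}}$ because $u^\nu(0)=u_{\mathrm{in}}=(0,0,\rho_{\mathrm{in}})$ by \eqref{eq:ID}. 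On the other hand, the NSE energy equality \eqref{eq:EE} holds for $u^\nu$. Now fix $t\in\mathcal S$: then $v^\nu(t)=0$, so $\|u^\nu(t)\|_{L^2}^2=\|\theta^\nu(t)\|_{L^2}^2$, and also $\|u_{\mathrm{in}}\|_{L^2}^2=\|\rho_{\mathrm{in}}\|_{L^2}^2$. Subtracting the two balances, the $\theta^\nu$-terms cancel and the two dissipation integrals combine, by the Dirichlet splitting above, into $\nu\int_0^t\|\nabla v^\nu\|_{L^2}^2\,d\tau$, leaving
\[
\int_0^t(f^\nu,u^\nu)\,d\tau=\nu\int_0^t\|\nabla v^\nu\|_{L^2}^2\,d\tau\leq\nu C\longrightarrow 0 .
\]

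It remains to identify the limit of the work term with $\int_0^t(f,u)\,d\tau$. Writing $\int_0^t(f^\nu,u^\nu)\,d\tau-\int_0^t(f,u)\,d\tau=\int_0^t(f^\nu-f,u^\nu)\,d\tau+\int_0^t(f,u^\nu-u)\,d\tau$, the first piece is bounded by $\|f^\nu-f\|_{L^1(0,2;L^2)}\sup_\nu\|u^\nu\|_{L^\infty(0,2;L^2)}$, which tends to $0$ since $f^\nu\to f$ in $L^1(0,2;L^2)$ and the uniform bound on $\|u^\nu\|_{L^\infty(0,2;L^2)}$ follows from \eqref{eq:EE}; the second piece tends to $0$ by dominated convergence, using the weak convergence $u^\nu(\tau)\rightharpoonup u(\tau)$ in $L^2$ for a.e.\ $\tau$ (part of the standing setup of the constructions, where $u^\nu\to u$ in $C_w([0,2];L^2)$) together with the $\tau$-integrable dominating function $2\|f(\tau)\|_{L^2}\sup_\nu\|u^\nu\|_{L^\infty(0,2;L^2)}$. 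Hence $\int_0^t(f,u)\,d\tau=0$ for all $t\in\mathcal S$.

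There is essentially no real obstacle here; the statement is genuinely elementary. The only point worth flagging is that \eqref{eq:Zero_Work} refers to a limiting field $u$ that the lemma does not explicitly introduce: one reads it as the weak-$*$ limit of the NSE solutions (which exists along a subsequence because $u^\nu$ is bounded in $L^\infty(0,2;L^2)$ and, through the equation, weakly equicontinuous in time, so a subsequence converges in $C_w([0,2];L^2)$), and the computation above then applies along that subsequence. If one prefers to avoid invoking any limit at all, \eqref{eq:Zero_Work} can be restated equivalently as $\lim_{\nu\to 0}\int_0^t(f^\nu,u^\nu)\,d\tau=0$, which is precisely the output of the second paragraph.
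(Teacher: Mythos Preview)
Your argument is correct and essentially identical to the paper's own proof: both use the orthogonal splitting $\|\nabla u^\nu\|_{L^2}^2=\|\nabla v^\nu\|_{L^2}^2+\|\nabla\theta^\nu\|_{L^2}^2$ together with the $L^2(0,2;H^1)$ bound on $v^\nu$ for \eqref{eq:DA_for_u=DA_for_th}, and then subtract the energy equalities for $u^\nu$ and $\theta^\nu$ at $t\in\mathcal S$ to deduce $\int_0^t(f^\nu,u^\nu)\,d\tau\to 0$. Your additional care in justifying $\lim_{\nu\to 0}\int_0^t(f^\nu,u^\nu)\,d\tau=\int_0^t(f,u)\,d\tau$ and your remark that the limit $u$ is only implicitly specified are both apt; the paper simply invokes this convergence without spelling it out.
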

\begin{proof}
Since $\{v^\nu\}_\nu$ is bounded in $L^2(0,2;H^1)$, we have
\begin{equation} \label{eq:v-NoDA}
\lim_{\nu\to 0} \nu \int_{0}^{t} \| \nabla v^{\nu} \|_{L^2}^2 \, d\tau=0, \qquad \forall t\in [0,2],
\end{equation}
which implies \eqref{eq:DA_for_u=DA_for_th}.

Now thanks to the fact that $v^\nu(t) =0$ for all $m$ and $t \in \mathcal{S}$, we have
\begin{equation} \label{eq:u=theta_Therem1_proof}
\|u^\nu(t)\|_{L^2} = \|\theta^\nu(t)\|_{L^2}, \qquad t \in \mathcal{S}.
\end{equation}
In addition, $u^\nu(0) = (0,0,\theta^\nu(0))= u_{\mathrm{in}}$ for all $\nu$. Hence, subtracting the energy equalities for $u^\nu$ and $\theta^\nu$ and using \eqref{eq:DA_for_u=DA_for_th} we get
\[
\begin{split}
\int_{0}^{t} (f, u) \, d\tau &= \lim_{\nu \to 0} \int_{0}^{t} (f^\nu, u^\nu) \, d \tau\\
&=\lim_{\nu\to 0} \left( \nu \int_{0}^{t} \| \nabla u^{\nu} \|_{L^2}^2 \, d\tau -  \nu \int_{0}^{t} \| \nabla \theta^{\nu} \|_{L^2}^2 \, d\tau \right)=0, \qquad  t \in \mathcal{S},
\end{split}
\]
and hence \eqref{eq:Zero_Work} holds.

\end{proof}

We will also need to ensure that solutions of the NSE converge to a solution of the Euler equations in the above framework, which is guaranteed by the following standard argument.


\begin{lemma} \label{l:limiting_weak_solution}
Let $\{\theta^\nu\}_{\nu}$ be  bounded in $L^\infty(T_1,T_2; L^2)$ family of solutions to the advection-diffusion equation
\begin{equation} 
\p_t \theta^\nu + v^\nu \cdot \nabla \theta^\nu = \nu \Delta \theta^\nu,
\end{equation}
with drift $v^\nu \to v$ in $L^1(T_1,T_2;L^2)$ as $\nu \to 0$. Then there is a subsequence $\nu_j \to 0$ such that
\[
\theta^{\nu_j} \to \theta, \qquad \text{in} \qquad C_w([0,T];L^2),
\]
for some weak solution $\theta$ of the transport equation with drift $v$.
\end{lemma}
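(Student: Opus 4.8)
The plan is to extract a weakly-$*$ converging subsequence in $L^\infty(T_1,T_2;L^2)$, upgrade this to convergence in $C_w([T_1,T_2];L^2)$ via an equicontinuity-in-time argument using the equation, and then pass to the limit in the weak formulation of the advection-diffusion equation. First I would note that, since $\{\theta^\nu\}_\nu$ is bounded in $L^\infty(T_1,T_2;L^2)$, a standard diagonal/Banach--Alaoglu argument gives a subsequence $\nu_j\to 0$ and $\theta\in L^\infty(T_1,T_2;L^2)$ with $\theta^{\nu_j}\rightharpoonup^* \theta$. To get $C_w$ convergence, for each fixed test function $\phi\in C^\infty(\TT^2)$ I would examine $t\mapsto (\theta^{\nu_j}(t),\phi)$: using the equation,
\[
\frac{d}{dt}(\theta^{\nu_j}(t),\phi) = (\theta^{\nu_j}, v^{\nu_j}\cdot\nabla\phi) + \nu_j(\theta^{\nu_j},\Delta\phi),
\]
and the right-hand side is bounded in $L^1(T_1,T_2)$ uniformly in $j$ (the first term by the $L^\infty_tL^2$ bound on $\theta^{\nu_j}$ and the $L^1_tL^2$ bound on $v^{\nu_j}$, which holds since $v^{\nu_j}\to v$ in $L^1(T_1,T_2;L^2)$; the second term because $\nu_j\to 0$). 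Hence the scalar functions $t\mapsto(\theta^{\nu_j}(t),\phi)$ are uniformly bounded and uniformly equicontinuous (in fact uniformly absolutely continuous), so by Arzel\`a--Ascoli they converge uniformly along a further subsequence; a density argument in $\phi$ over a countable dense subset of $L^2$ together with the uniform $L^2$ bound then yields $\theta^{\nu_j}\to\theta$ in $C_w([T_1,T_2];L^2)$.

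It remains to show the limit $\theta$ is a weak solution of $\partial_t\theta + v\cdot\nabla\theta = 0$. I would test the advection-diffusion equation against $\psi(x)\varphi(t)$ with $\psi\in C^\infty(\TT^2)$, $\varphi\in C_c^\infty(T_1,T_2)$ to get
\[
-\int_{T_1}^{T_2}(\theta^{\nu_j},\psi)\,\varphi'(t)\,dt = \int_{T_1}^{T_2}(\theta^{\nu_j} v^{\nu_j},\nabla\psi)\,\varphi(t)\,dt + \nu_j\int_{T_1}^{T_2}(\theta^{\nu_j},\Delta\psi)\,\varphi(t)\,dt.
\]
The left side converges by the weak-$*$ convergence of $\theta^{\nu_j}$; the diffusion term vanishes since $\nu_j\to 0$ and $\theta^{\nu_j}$ is bounded in $L^\infty_tL^2$. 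The only delicate term is the nonlinear flux $\int(\theta^{\nu_j}v^{\nu_j},\nabla\psi)\varphi\,dt$, which requires passing to the limit in a product of two weakly converging factors — this is the main obstacle. The resolution is that $v^{\nu_j}\to v$ \emph{strongly} in $L^1(T_1,T_2;L^2)$ by hypothesis, while $\theta^{\nu_j}$ is bounded in $L^\infty(T_1,T_2;L^2)$, so the product $\theta^{\nu_j}v^{\nu_j}\rightharpoonup \theta v$ in, say, $L^1(T_1,T_2;L^1)$: writing $\theta^{\nu_j}v^{\nu_j} - \theta v = \theta^{\nu_j}(v^{\nu_j}-v) + (\theta^{\nu_j}-\theta)v$, the first summand tends to $0$ in $L^1_tL^1$ by Cauchy--Schwarz using the strong convergence of $v^{\nu_j}$ and the uniform $L^\infty_tL^2$ bound on $\theta^{\nu_j}$, and the second tends to $0$ weakly against the bounded test function $\nabla\psi\cdot\varphi \in L^\infty$ by the weak-$*$ convergence of $\theta^{\nu_j}$ (after dominated convergence, using $v\in L^1_tL^2$ to handle the $t$-integration). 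This identifies the limit of the nonlinear term as $\int(\theta v,\nabla\psi)\varphi\,dt$, so $\theta$ satisfies the weak transport equation with drift $v$, completing the proof. Finally I would remark that the attainment of the initial datum, if needed, follows from the $C_w$ convergence at $t=T_1$; as stated the lemma only asserts $\theta$ is a weak solution, so no further argument is required.
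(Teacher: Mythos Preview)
Your proof is correct and follows essentially the same route as the paper's. The paper obtains $C_w$-compactness by observing that the Fourier coefficients $\hat\theta^\nu_k(t)$ are uniformly equicontinuous in $t$ (which is exactly your argument with $\phi=e^{ik\cdot x}$) and then invokes Arzel\`a--Ascoli; for the nonlinear term it uses the identical decomposition $v^{\nu_j}\theta^{\nu_j}-v\theta=(v^{\nu_j}-v)\theta^{\nu_j}+v(\theta^{\nu_j}-\theta)$, handling the first piece by the strong $L^1_tL^2$ convergence of $v^{\nu_j}$ and the second by the weak convergence of $\theta^{\nu_j}$.
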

\begin{proof}
The existence of a convergent in $C_w([0,T];L^2)$ subsequence of $\{\theta^\nu\}_{\nu}$ follows from from the fact that the sequence of Fourier coefficients $\{\hat \theta^\nu_k(t)\}_k$ is uniformly in $\nu$  equicontinuous. Then by the Acoli-Arzela theorem, $\{\theta^\nu\}_{\nu}$ is relatively compact in $C_w([T_1,T_2];L^2)$. This also follows from the Aubin-Lions compactness theorem, which is a more traditional approach. Hence, there is a subsequence $\{\nu_m\}_m$ and  $\theta \in C_w([0,T];L^2)$ with
\[
\theta^{\nu_m} \to \theta, \qquad \text{in} \qquad C_w([0,T];L^2).
\]
To show that $\theta$ is a weak solution of the transport equation, we decompose
\[
v^{\nu_m} \theta^{\nu_m} - v \theta = (v^{\nu_m} -v)\theta^{\nu_m}+ v(\theta^{\nu_m}-\theta) 
\]
in the weak formulation and use $v^{\nu_m} \to v$ in $L^1(T_1,T_2;L^2)$ for the first term and the the weak convergence of $\theta^{\nu_m}$ to $\theta$ for the second.
\end{proof}

This lemma will imply that the constructed $2+\frac12$ dimensional solutions of \eqref{eq:NSE}  $u^{\nu}=(v^{\nu},\theta^\nu)$
with force $f^\nu \to f$ in $C([0,1];C^\alpha(\TT^2))$ will weakly converge, after passing to a subsequence, to a solution of the Euler equation with force $f$.

\section{Proof of Theorem~\ref{thm:main}}


\begin{proof}[Proof of Theorem~\ref{thm:main}]
For this construction we extend the smooth solution $u(t)$ of the Euler equation on $[0,1)$ defined in \eqref{eq:Def_u} to a weak solution on $[0,2]$ trivially. We keep the 2D velocity $v$ zero on $[1,2]$ and as a result the scalar $\rho(t)$ transported by $v$ stays constant for $t\geq 1$. Since the weak limit of $u(t)$ is zero as $t\to1-$ (see \eqref{eq:u_weak_limit}), we define
\[
v(t):=
\begin{cases}
\tilde v(t), & t\in[0,1),\\
0, & t \in[1,2],
\end{cases}
\qquad 
\rho(t):=
\begin{cases}
\tilde \rho(t), & t\in[0,1),\\
0, & t \in[1,2],
\end{cases}
\]
which are weakly continuous in $L^2$ (see \eqref{eq:def_tilde_v-limit}, \eqref{eq:def_tilde_rho-limit}). Then $\rho(t)$ is a weak solution (with one singular time $t=1$) of the transport equation
\begin{equation} 
\p_t \rho + v \cdot \nabla \rho =0,
\end{equation}
and
\[
u(t)=(v(t),\rho(t)),
\]
is a weak solution of the Euler equation with force $f=(g,0)=(\p_t v +(v \cdot \nabla) v,0)$ and initial data $u(0)=u_{\mathrm{in}}= (0,\rho_{\mathrm{in}})$ on the extended time interval $[0,2]$.

We will also need auxiliary functions
$v^m \in C^\infty( [0,2] \times \TT^2; \RR^2)$ and  $\rho^m \in C^\infty([0,2] \times \TT^2)$ defined as 
\[
v^m(t):=
\begin{cases}
\tilde v^m(t), & t\in[0,1),\\
0, & t \in[1,2],
\end{cases}
\qquad 
\rho^m(t):=
\begin{cases}
\tilde \rho^m(t), & t\in[0,1),\\
\rho^m(1), & t \in[1,2],
\end{cases}
\]
where $\tilde v^m$ and $\tilde \rho^m$ are defined in \eqref{eq:def_tilde_v} and \eqref{eq:def_tilde_rho} respectively.
Note that $\rho^m(t)$ satisfies the transport equation with drift $v^m(t)$ on
on the extended time interval $[0,2]$. Let $\theta^m$ be the unique smooth solution of the  advection diffusion equation
\begin{equation} 
\p_t \theta^m + v^m \cdot \nabla \theta^m = \nu_m \Delta \theta^m,
\end{equation}
on $[0,2]$ with $\theta^m(0)=\rho_{\mathrm{in}}$. The smooth solutions of the \eqref{eq:NSE} on $[0,2]$ will be given by
\[
u^{m}(t)=(v^{m}(t),\theta^m(t)),
\]
with force $f^m=(g^m,0)$, where $g^m$ is defined as in \eqref{eq:def_g^m}. Since $v^{m}(0)=0$ for every $m$, the initial data is given by $u^m(0)=u_{\mathrm{in}}= (0,\rho_{\mathrm{in}})$. 

\bigbreak

\noindent
{\bf Total dissipation anomaly on $[0,1]$.}
Since $\|\theta^m(0)\|_{L^2}=1$, by the energy equality for $\theta^m$,
\[
2\nu_m \int_{0}^{1} \|\nabla \theta^m\|_{L^2}^2 \, dt  \leq \|\theta^m(0)\|_{L^2}^2=1.
\]
Take any $T \in [t_{m+1},2]$, where recall $t_m=1-(m+1)^{-2}$.
Since the dissipation $\int_0^{T}\|\nabla \rho^m\|_{L^2}^2 \, dt$ is dominated by the last term in $\rho^{m}$ (i.e., $\rho_m$ in \eqref{eq:def_tilde_v}) supported on $[t_m, T]$, we have
\begin{equation} \label{eq:estimate_on_difference}
\begin{split}
\sup_{t\in[0, T]}\|\theta^m(t)-\rho^m(t)\|^2_{L^2} &\leq \left( 2\nu_m \int_{0}^{T} \|\nabla \rho^m\|_{L^2}^2 \, dt \right)^{\frac12} \left( 2\nu_m \int_{0}^{T} \|\nabla \theta^m\|_{L^2}^2 \, dt \right)^{\frac12} \\
&\leq \left( 2\nu_m \int_{0}^{T} \|\nabla \rho^m\|_{L^2}^2 \, dt \right)^{\frac12}\\
&\lesssim \sqrt{\nu_m (T-t_m)} \l_m.
\end{split}
\end{equation}
In particular, since $t_{m+1}-t_{m}\sim m^{-3}$, taking $T=t_{m+1}$ we obtain
\[
\sup_{t\in[0, t_{m+1}]}\|\theta^m(t)-\rho^m(t)\|^2_{L^2} \leq c \sqrt{\nu_m } m^{-\frac32} \l_m,
\] 
for some absolute constant $c$. For a sequence $0<\alpha_m<1$ with $\lim_{m\to \infty} \alpha_m =0$, fixed later, let the viscosity be
\begin{equation} \label{vis_h}
\nu_m= \nu_m^{\mathrm{h}} := \alpha_m^4 c^{-2} m^{3} \l_m^{-2} . 
\end{equation}
Then
\begin{equation} \label{eq:smallness_of_zeta}
\sup_{t\in[0, t_{m+1}]}\|\theta^m(t)-\rho^m(t)\|^2_{L^2} \leq \alpha_m^2.
\end{equation}
Denoting by $P_{\leq \Lambda_m}$ the projection on the frequency below 
\[
\Lambda_m := \frac{\alpha_m}{C} \l_m,
\]
 we note that, since the density $\rho^m$ stays on the highest frequency $\l_m$ on $[t_m,t_{m+1}]$, using estimates \eqref{eq:estimates_rho},
\begin{equation} \label{eq:TotalDA_rho^m_low_modes}
\begin{split}
\|P_{\leq \L_m}\rho^m(t)\|_{L^2} &\leq \Lambda_m \|P_{\leq \L_m}\rho^m(t)\|_{H^{-1}}\\
& \leq \frac{\alpha_m}{C} \l_m C \l_m^{-1}\\
& = \alpha_m,
\end{split}
\end{equation}
for $t\in [t_m,t_{m+1}]$. Thus, thanks to \eqref{eq:smallness_of_zeta}  we have 
\begin{equation} 
\begin{split}
\|P_{\leq \L_m}\theta^m(t_{m+1})\|_{L^2} &\leq   \|P_{\leq \L_m}(\theta^m(t_{m+1})-\rho^m(t_{m+1}))\|_{L^2}+ \|P_{\leq \L_m}\rho^m(t_{m+1})\|_{L^2}\\
&\leq \alpha_m+\alpha_m,
\end{split}
\end{equation}
Then using the fact that $v^m \equiv 0$ on $[t_{m+1}, 2]$, and hence $\theta^m(t)$ satisfies the heat equation,
\begin{equation} \label{eq:heat_thm1}
\p_t \theta^m = \nu_m \Delta \theta^m, \qquad t \in [t_{m+1}, 2],
\end{equation}
 we obtain, thanks to estimates \eqref{eq:estimates_rho},
\begin{equation} \label{eq:low_modes-Thm1}
\begin{split}
 \| P_{\leq \Lambda_m} \theta^m(t) \|_{L^2}^2 &\leq  \| P_{\leq \Lambda_m} \theta^m(t_{m+1}) \|_{L^2}^2\\
&\leq 4\alpha ^2_m\\
& \to 0,
\end{split}
\end{equation}
as $m\to \infty$ for all $t \in [t_{m+1},2]$.

Now turning to high modes, using again the fact that $\theta^m(t)$ satisfies the heat equation \eqref{eq:heat_thm1} and estimates \eqref{eq:estimates_rho}, we have
\[
\begin{split}
\frac{d}{dt} \|  P_{> \Lambda_m} \theta^m(t) \|_{L^2}^2 &= -2\nu_m \|\nabla  P_{> \Lambda_m} \theta^m(t)\|_{L^2}^2\\
& \leq  -2 \nu_m \Lambda_m^2\|P_{> \Lambda_m} \theta^m(t) \|_{L^2}^2.
\end{split}
\]
Hence, since $\|\theta^m(t_{m+1}) \|_{L^2}^2 \leq \|\theta^m(0) \|_{L^2}^2=1$, we obtain
\begin{equation} \label{eq:exponential_decay_on_high_modes}
\| P_{> \Lambda_m} \theta^m(t) \|_{L^2}^2 \leq e^{- 2\nu_m \Lambda_m^2 (t-t_{m+1})}, \qquad t\in [t_{m+1},2].
\end{equation}
Note that
\[
\begin{split}
\nu_m \Lambda_m^2 (1-t_{m+1}) &= \alpha_m^4 c^{-2} m^{3} \l_m^{-2} \cdot  \frac{\alpha_m^2}{C^2}\l_m^{2} \cdot (m+2)^{-2}\\
&\geq c_1 \alpha_m^6 m, 
\end{split}
\]
for some absolute constant $c_1$. So we choose $\alpha_m$ such that
\begin{equation} \label{eq:conditions_on_alpha_m}
\lim_{m\to \infty}\alpha_m = 0, \qquad \lim_{m\to \infty} \alpha_m^6 m = \infty,
\end{equation}
in which case \eqref{eq:exponential_decay_on_high_modes} implies 
\begin{equation} \label{eq:high_modes_limit}
\lim_{m\to \infty}  \| P_{> \Lambda_m} \theta^m(1) \|_{L^2}^2 =0.
\end{equation}
For instance, the choice of viscosity
\begin{equation} \label{vis_h-final}
\nu_m^{\mathrm{h}} = m^{\frac{5}{2}} \l_m^{-2}. 
\end{equation}
is compatible with \eqref{vis_h} and \eqref{eq:conditions_on_alpha_m}.

Now combining \eqref{eq:low_modes-Thm1} and \eqref{eq:high_modes_limit} we obtain
\begin{equation} \label{eq:vanishing}
\lim_{m \to \infty} \|\theta^m(1)\|_{L^2}^2 =0,
\end{equation}
and by the energy equality,
\begin{equation} \label{eq:Total_DA_Thm1}
 \lim_{m \to \infty} 2\nu_m \int_{0}^{1}\|\nabla \theta^m(t)\|^2_{L^2} \, dt =  \|\theta^m(0)\|_{L^2}^2 -   \lim_{m \to \infty} \|\theta^m(1)\|_{L^2}^2=1.
\end{equation}
So all the available energy dissipates in this case. The dissipation anomaly is the largest possible. 

\bigbreak

\noindent
{\bf No dissipation anomaly on $[0,1]$.} To determine the sequence of viscosities that does not lead to the dissipation anomaly, we recall that $1-t_{m}\sim m^{-2}$, and hence \eqref{eq:estimate_on_difference} yields
\begin{equation} \label{eq:difference_on_0-1}
\sup_{t\in[0, 1]}\|\theta^m(t) - \rho^m(t)\|^2_{L^2} \leq c_2 \sqrt{\nu_m } m^{-1} \l_m,
\end{equation}
for some absolute constant $c_2$. So for the choice of viscosity 
\begin{equation} \label{vis_int}
\nu_m= \nu_m^{\mathrm{int}}:= m\l_m^{-2},
\end{equation}
we have 
\begin{equation} \label{eq:No_DA_up_to_1_below_int}
\lim_{m \to \infty} \sup_{t\in[0, 1]}\|\theta^m(t) - \rho^m(t)\|^2_{L^2} =0,
\end{equation}
or, in particular,
\[
\lim_{m \to \infty}  \|\theta^m(1)\|_{L^2}^2 =1,
\]
and by the energy equality,
\begin{equation} \label{eq:No_DA_Thm1}
 \lim_{m \to \infty} 2\nu_m \int_{0}^{1}\|\nabla \theta^m(t)\|^2_{L^2} \, dt =  \|\theta^m(0)\|_{L^2}^2 -   \lim_{m \to \infty} \|\theta^m(1)\|_{L^2}^2 = 0.
\end{equation}
So there is no dissipation anomaly on $[0,1]$ in this case. 

\bigbreak

\noindent
{\bf Arbitrary dissipation anomaly on $[0,1]$.} Having obtained the total dissipation anomaly \eqref{eq:Total_DA_Thm1} for sequence $\nu_m = \nu_m^{\mathrm{h}}$ as well as no dissipation anomaly \eqref{eq:Total_DA_Thm1} for $\nu_m = \nu_m^{\mathrm{int}}$, we can find sequences of viscosities that result in all the possible levels of dissipation anomaly. Indeed,
using continuous dependence of $\|\theta^m(1)\|_{L^2}$ on the viscosity $\nu_m$, we conclude that 
for every $e \in [0,1]$, there is a sequence of solutions of the advection diffusion equation $\theta^{\nu^e_m}$ with $\nu^e_m \in[\nu_m^{\mathrm{int}}, \nu_m^{\mathrm{h}},]$ that 
dissipates $e$ amount of energy at time $t=1$ in the limit of vanishing viscosity:
\begin{equation} \label{Thm1_DA_1-proof}
\lim_{m\to \infty} 2\nu^e_m \int_{0}^{1} \| \nabla \theta^{\nu^e_m} \|_{L^2}^2 \, dt=e.
\end{equation}

\bigbreak

\noindent
{\bf Total dissipation anomaly on $[0,1+]$.} We fix any $t\in (1,2]$.
For any sequence of viscosities $\nu_m \in [\nu_m^{\mathrm{int}},\nu_m^{\mathrm{h}}]$, bound \eqref{eq:smallness_of_zeta} and hence \eqref{eq:low_modes-Thm1} still hold, i.e.,
\[
\lim_{m\to \infty} \| P_{\leq \Lambda_m} \theta^m(t) \|_{L^2}^2=0.
\]
On the other hand, bound \eqref{eq:exponential_decay_on_high_modes} gives
\[
\begin{split}
\| P_{> \Lambda_m} \theta^m(t) \|_{L^2}^2 &\leq e^{- 2\nu_m \Lambda_m^2 (t-t_{m+1})}\\
&\to 0,
\end{split}
\]
as $m \to \infty$ because $t-t_{m+1}$ is bounded away from zero and 
\[
\begin{split}
\nu_m \Lambda_m^2 \geq  \nu_m^{\mathrm{int}} \Lambda_m^2& = m \l_m^{-2} \cdot  \frac{\alpha_m^2}{C^2}\l_m^{2}\\
&=C^{-2} \alpha_m^6 m \to \infty,
\end{split}
\]
as $m\to \infty$ due to \eqref{eq:conditions_on_alpha_m}.

Hence we have 
\begin{equation} \label{Thm1_DA_2-proof}
\lim_{m \to \infty} \|\theta^m(t)\|_{L^2}^2 =0, \qquad \lim_{m\to \infty} 2\nu_m \int_{0}^{t} \| \nabla \theta^{\nu_m} \|_{L^2}^2 \, d\tau=1.
\end{equation}

\bigbreak

\noindent
{\bf Arbitrary dissipation anomaly on $[0,2]$ with continuous energy limit $E(t)$.} 
Now we examine a low range of viscosities $\nu_m \in [\nu_m^{\mathrm{l}},\nu_m^{\mathrm{int}}]$, where
\begin{equation} \label{vis_l}
\nu_m^{\mathrm{l}}:= m^{-1} \l_m^{-2}.
\end{equation}
Thanks to \eqref{eq:difference_on_0-1}, in this range of viscosities \eqref{eq:No_DA_up_to_1_below_int} holds,
i.e., there is no dissipation anomaly on $[0,1]$ and
\begin{equation} \label{eq:No-DA_up_to_1_cont_E}
\lim_{m \to \infty} \|\theta^m(1)-\rho^m(1)\|_{L^2}=0, \qquad \lim_{m \to \infty}  \|\theta^m(1)\|_{L^2}^2 =1,
\end{equation}

To show the existence of a subsequence with a continuous energy limit, first recall that $1-t_{m}\sim m^{-2}$, and hence by \eqref{eq:estimate_on_difference} we have
\begin{equation} \label{eq:estimates_differences_cont_E}
\|\theta^m(1) - \rho^m(1)\|_{L^2}^2 \leq c\sqrt{\nu_m} m^{-1}\l_m , \qquad \|\theta^m(1+\tau) - \rho^m(1+\tau)\|_{L^2}^2 \leq c\sqrt{\nu_m} (m^{-2} + \tau)^{\frac12}\l_m,
\end{equation}
for all $\tau \in [0,1]$. Since $\theta^m(t)$ satisfies the heat equation on $[1,2]$, the energy $\|\theta^m(t)\|_{L^2}^2$ is a convex decreasing uniformly continuous function on $[1,2]$. Therefore, using \eqref{eq:estimates_differences_cont_E} and the triangle inequality, for any  $t\in [1,2]$ and $\tau \in [0, 2-t]$ we obtain
\[
\begin{split}
\big| \|\theta^m(t)\|_{L^2} - \|\theta^m(t+\tau)\|_{L^2} \big| &\leq  \|\theta^m(1)\|_{L^2} - \|\theta^m(1+\tau)\|_{L^2}\\
 &= \|\theta^m(1)\|_{L^2} -\| \rho^m(1)\|_2 + \|\rho^m(1+\tau)\|_{L^2}- \|\theta^m(1+\tau)\|_{L^2}\\
&\leq \|\theta^m(1) - \rho^m(1)\|_2 +  \|\rho^m(1+\tau)-\theta^m(1+\tau)\|_{L^2}\\
& \leq 2 \sqrt{c} \nu_m^{\frac14} (m^{-2} + \tau)^{\frac14} \l_m^{\frac{1}{2}}\\
& \leq 2 \sqrt{c} k^{\frac14} (m^{-2} + \tau)^{\frac14},
\end{split}
\]
provided $\nu_m \leq k \l_m^{-2}$ for some $k>0$. Hence, for such viscosities, the sequence $\|\theta^m(t)\|_{L^2}$ is uniformly equicontinuous on $[1,2]$. Thus, by the Arzel\`a-Ascoli theorem, there is a subsequence $m_j \to \infty$ as $j\to \infty$ such that $\|\theta^{m_j}(t)\|_{L^2}$ converges uniformly, and hence
\begin{equation} \label{eq:E(t)_cont_proof}
E(t):= \lim_{j\to \infty} \|\theta^{m_j}(t)\|_{L^2},
\end{equation}
is continuous on $[1,2]$. On the other hand, we clearly have (as in \eqref{eq:estimate_on_difference} and  \eqref{eq:estimates_differences_cont_E}) that
\[
E(t) =\lim_{j\to \infty} \|\rho^{m_j}(t)\|_{L^2} =1, \qquad t \in[0,1].
\]
Hence 
\begin{equation} \label{eq:E(t)_continuous_Thm1_proof}
E \in C([0,2]).
\end{equation}
Finally, since $\|\theta^m(t)\|_{L^2}^2$ is a convex on $[1,2]$ for every $m$, we the limit energy $E(t)$ is convex on $[1,2]$, and, thanks to \eqref{eq:E(t)_continuous_Thm1_proof}, $E(t)$ is absolutely continuous on $[0,2]$.

To show that the dissipation anomaly on $[0,2]$ can be arbitrary (except total) in this regime, first note that when $\nu_m = \nu_m^{\mathrm{l}}$, bound \eqref{eq:estimate_on_difference} immediately implies that $\|\theta^m(2)\|_{L^2} \to \|\rho^m(2)\|_{L^2}=1$ as $m\to \infty$, and hence there is no dissipation anomaly on the whole interval $[0,2]$. The other extreme case is more delicate. The choice of viscosity $\nu_m^{\mathrm{int}}$ results in the full dissipation anomaly as we saw above. However, such a viscosity is too large to deduce the continuity of the energy limit $E(t)$. Therefore we consider the sequence
\[
\nu_m = k \l_m^{-2},
\]
for which we already know that the the energy limit \eqref{eq:E(t)_cont_proof} is continuous for some subsequence. Recall that there is no dissipation anomaly on $[0,1]$, i.e.,  \eqref{eq:No-DA_up_to_1_cont_E} holds. We choose frequency
\[
\Lambda_m := \frac{1}{k^{\frac14}C} \l_m,
\]
and as in \eqref{eq:TotalDA_rho^m_low_modes} notice that 
\[
\|P_{\leq \L_m}\rho^m(1)\|_{L^2} \leq k^{-\frac14}.
\]
Hence, using the facts that there is no dissipation anomaly on $[0,1]$, i.e.,  \eqref{eq:No-DA_up_to_1_cont_E} holds, as well as $v^m \equiv 0$ on $[1, 2]$, we obtain
\begin{equation} \label{eq:high_modes}
\begin{split}
 \| P_{\leq \Lambda_m} \theta^m(2) \|_{L^2} &\leq  \| P_{\leq \Lambda_m} \theta^m(1) \|_{L^2}\\
&\leq   \|P_{\leq \L_m}(\theta^m(1)-\rho^m(1))\|_{L^2}+ \|P_{\leq \L_m}\rho^m(1)\|_{L^2}\\
&\leq   \|\theta^m(1)-\rho^m(1)\|_{L^2}+ k^{-\frac14}\\
& \to k^{-\frac14},
\end{split}
\end{equation}
as $m\to \infty$, where we used \eqref{eq:No-DA_up_to_1_cont_E} to ensure the convergence of the bound.  Now recall from \eqref{eq:exponential_decay_on_high_modes} that the energy on high modes decays exponentially 
\[
\begin{split}
\| P_{> \Lambda_m} \theta^m(2) \|_{L^2}^2 &\leq e^{- 2\nu_m \Lambda_m^2 (2-1)}\\
&\leq e^{- 2\sqrt{k}}\\
&\to 0,
\end{split}
\]
as $m \to \infty$ by the choice of $\nu_m$ and $\Lambda_m$. Hence,
\[
\limsup_{m\to \infty} \| \theta^m(2) \|_{L^2}^2 \leq k^{-\frac14}, \qquad \text{and} \qquad \liminf_{m\to \infty} 2\nu_m \int_{0}^{2} \| \nabla \theta^{m} \|_{L^2}^2 \, d\tau\geq 1-k^{-\frac14} \quad \text{for} \quad \nu_m = k \l_m^{-2},
\]
due to the energy equality. This means the dissipation arbitrary can be arbitrary close to total (when $k$ is close to zero).

Recall that  by \eqref{eq:estimate_on_difference} there is no dissipation anomaly when $\nu_m = \nu_m^{\mathrm{l}}=m^{-1} \l_m^{-2}$, i.e.,
\[
 \liminf_{m\to \infty} 2\nu_m \int_{0}^{2} \| \nabla \theta^{m} \|_{L^2}^2 \, d\tau =0, \quad \text{for} \quad \nu_m = \nu_m^{\mathrm{l}}. 
\]
Then using continuous dependence of $\|\theta^m(2)\|_{L^2}$ on the viscosity $\nu_m$ we conclude that 
for every $e \in [0,1)$, there is a sequence of solutions of the advection diffusion equation $\theta^{\nu^e_m}$ with $\nu^e_m \in[\nu_m^{\mathrm{int}}, \nu_m^{\mathrm{h}},]$ that 
dissipates $e$ amount of energy at time $t=2$ in the limit of vanishing viscosity:
\begin{equation} \label{Thm1_DA_3-proof}
\lim_{m\to \infty} 2\nu^e_m \int_{0}^{2} \| \nabla \theta^{m} \|_{L^2}^2 \, dt=e;
\end{equation}

\bigbreak

\noindent
{\bf Conclusion. }
Finally, choose a sequence $m_j \to \infty$ fast enough, so that the ranges for the viscosity (see \eqref{vis_h-final} and \eqref{vis_l}) do not intersect:
\[
[\nu_{m_i}^{\mathrm{l}}, \nu_{m_i}^{\mathrm{h}}] \cap [\nu_{m_j}^{\mathrm{l}}, \nu_{m_j}^{\mathrm{h}}] = \emptyset, \qquad i\ne j.
\]
Then we define the family of viscosities $\mathcal{V}$ as the disjoint union of those intervals
\[
\mathcal{V}= \bigcup_{j \in \NN} [\nu_{m_i}^{\mathrm{l}}, \nu_{m_i}^{\mathrm{h}}].
\]
Now we can label the elements of the sequence $\{m_j\}$ by the elements of $\mathcal{V}$ by means of the mapping
 $\mathfrak{m}: \mathcal{V} \to \NN$ defined as
\[
\mathfrak{m}(\nu) = m_i \qquad \text{provided} \qquad \nu \in [\nu_{m_i}^{\mathrm{l}}, \nu_{m_i}^{\mathrm{h}}].
\]
Then we can label all the solutions of the advection diffusion equation considered above by the elements of $\mathcal{V}$. Now we switch to a more explicit notation and let $\theta^\nu$ be the solution of the advection diffusion equation with drift $v^{\mathfrak{m}(\nu)}$ and viscosity $\nu$.
Then the corresponding solutions to \eqref{eq:NSE} are defined as
\[
u^{\nu}:=(v^{\mathfrak{m}(\nu)},  \theta^\nu), \qquad \nu \in \mathcal{V}.
\]

Recall that $v^m(t) =0$ for all $m$ and $t\in[1,2]$, and hence
Lemma~\ref{l:DA_for_u=DA_for_th} implies
\[
\lim_{j\to \infty} \left( \nu_j \int_{0}^{t} \| \nabla u^{\nu_j} \|_{L^2}^2 \, d\tau - \nu_j \int_{0}^{t} \| \nabla \theta^{\nu_j} \|_{L^2}^2 \, d\tau\right)=0, \qquad \forall t\in[0,2],
\]
for any sequence $\nu_j \to 0$,  $\nu_j \in \mathcal{V}$ for all $j$.
Thus, the existence of subsequences of solutions with properties \eqref{Thm1_DA_1}, \eqref{Thm1_DA_2}, and  \eqref{Thm1_DA_3} follow from \eqref{Thm1_DA_1-proof}, \eqref{Thm1_DA_2-proof}, and \eqref{Thm1_DA_3-proof} respectively. The existence of the the second subfamily of solutions with absolutely continuous limiting energy $E(t)$ 
follows from \eqref{eq:E(t)_continuous_Thm1_proof} and \eqref{eq:u=theta_Therem1_proof}.

Finally,  the constructed family of the solutions $\{u^{\nu}\}_{\nu \in \mathcal{V}}$ to the Navier-Stokes equations can be reduced to a countable family
\[
\{u^{\nu}\}_{\nu \in \mathcal{V\cap \mathbb{Q}}},
\]
so that the statement of the Theorem~\ref{thm:main} still holds.

\end{proof}


\section{Proof of Theorem~\ref{thm:main-2}}

Now we proceed to proving the second main theorem involving a backward energy cascade.

\begin{proof}[Proof of Theorem~\ref{thm:main-2}]
We start with two explicit weak solutions of the Euler equations $u_1$ and $u_2$ on $[0,2]$, which are extensions of the smooth solution $u(t)$ on $[0,1)$ in Section~\ref{sec:Mixing}. 

\noindent
{\bf Two extreme limiting solutions of the Euler equations.}
For this construction we extend the drift $v(t)$ to $[1,2]$ by reversing time, which results in the reverse energy cascade for the scalar $\rho(t)$ for $t>1$. Immediately this gives two different explicit solutions to the transport and Euler equations. Indeed, let
\[
v(t):=
\begin{cases}
\tilde v(t), & t\in[0,1),\\
0, & t=1,\\
-\tilde v(2-t) , & t \in(1,2],
\end{cases}
\qquad 
\rho_1(t):=
\begin{cases}
\tilde \rho(t), & t\in[0,1),\\
0, & t=1,\\
\rho(2-t), & t \in(1,2],
\end{cases}
\qquad
\rho_2(t):=
\begin{cases}
\tilde \rho(t), & t\in[0,1),\\
0, & t \in[1,2],
\end{cases}
\]
which are weakly continuous in $L^2$ (see \eqref{eq:def_tilde_v-limit}, \eqref{eq:def_tilde_rho-limit}).
Then $\rho_1(t)$ and $\rho_2(t)$ are weak solutions (with singular time $t=1$) to the transport equation 
\begin{equation} 
\p_t \rho_i + v \cdot \nabla \rho_i =0, \qquad i=1,2,
\end{equation}
and
\begin{equation} \label{eq:two_limiting_solutions}
u_1(t)=(v(t),\rho_1(t)), \qquad u_2(t)=(v(t),\rho_2(t)),
\end{equation}
are weak solutions of the Euler equation with force $f=(g,0)=(\p_t v +(v \cdot \nabla) v,0)$ and initial data $u(0)=u_{\mathrm{in}}= (0,\rho_{\mathrm{in}})$ on the extended time interval $[0,2]$. 
Note that
\begin{equation} \label{eq:energy_of_rho1_and_rho2}
\|\rho_1(t)\|_{L^2}=
\begin{cases}
1, & t\in[0,1),\\
0, & t=1,\\
1, & t \in(1,2],
\end{cases}
\qquad
\rho_2(t):=
\begin{cases}
1, & t\in[0,1),\\
0, & t \in[1,2],
\end{cases}
\end{equation}
so clearly $\|u_1(t)\|_{L^2}> \|u_2(t)\|_{L^2}$ for all $t\in (1,2]$.

We will also need auxiliary functions
$v^m \in C^\infty(\TT^2\times [0,2]; \RR^2)$ and  $\rho^m \in C^\infty(\TT^2\times [0,2])$ be defined as 
\[
v^m(t):=
\begin{cases}
\tilde v^m(t), & t\in[0,1],\\
-\tilde v^m(2-t) , & t \in[1,2],
\end{cases}
\qquad 
\rho^m(t):=
\begin{cases}
\tilde \rho^m(t), & t\in[0,1],\\
\tilde \rho^m(2-t), & t \in[1,2],
\end{cases}
\]
where $\tilde v^m$ and $\tilde \rho^m$ are defined in \eqref{eq:def_tilde_v} and \eqref{eq:def_tilde_rho} respectively.
Then $\rho^m(t)$ satisfies the transport equation with drift $v^m(t)$ on
on the extended time interval $[0,2]$. Let $\theta^m$ be the unique smooth solution of the  advection diffusion equation
\begin{equation} 
\p_t \theta^m + v^m \cdot \nabla \theta^m = \nu_m \Delta \theta^m,
\end{equation}
on $[0,2]$ with $\theta^m(0)=\rho_{\mathrm{in}}$. The corresponding smooth solutions of the \eqref{eq:NSE} on $[0,2]$ are given by
\[
u^{m}(t)=(v^{m}(t),\theta^m(t)),
\]
with the force $f^m=(g^m,0)$, where $g^m$ is defined as in \eqref{eq:def_g^m}. Since $v^{m}(0)=0$ for every $m$, the initial data is given by $u^m(0)=u_{\mathrm{in}}= (0,\rho_{\mathrm{in}})$.

\bigbreak

\noindent
{\bf The first limiting solution: no dissipation anomaly.}
To obtain the first subsequence $\nu^1_m$, for which $u^{\nu^1_m}$ converges to the first limiting solution $u_1$ in \eqref{eq:two_limiting_solutions}, we  observe that
the dissipation $\int_0^2\|\nabla \rho^m\|_{L^2}^2 \, dt$ is dominated by the last term in $\rho^{m}$ (i.e., $\rho_m$ in \eqref{eq:def_tilde_v}) supported on the interval $[t_m, 2-t_m]$ of length $2(m+1)^{-2}$. Then, as in \eqref{eq:estimate_on_difference},
\begin{equation} \label{eq:improved_bound_difference}
\begin{split}
\sup_{t\in[0, 2]}\|\theta^m(t) - \rho^m(t)\|^2_{L^2} 
&\leq \left( 2\nu_m \int_{0}^{2} \|\nabla \rho^m\|_{L^2}^2 \, dt \right)^{\frac12}\\
&\leq c \sqrt{\nu_m } \l_m m^{-1},
\end{split}
\end{equation}
for some absolute constant $c$. So for the choice of viscosity 
\begin{equation} \label{vis_l-2}
\nu_m= \nu_m^{1}:= \l_m^{-2},
\end{equation}
we have 
\[
\lim_{m \to \infty}  \sup_{t\in[0, 2]}\|\theta^m(t) - \rho^m(t)\|^2_{L^2}=0,
\]
which implies that
\begin{equation} \label{eq:pfoof_thm_2_1st_seq}
\lim_{m \to \infty}  \|\theta^m(t)\|_{L^2} =\|\rho_{\mathrm{in}}\|_{L^2}=1, \qquad \forall t\in[0,2],
\end{equation}
and by the energy equality,
\[
 \lim_{m \to \infty} \nu_m \int_{0}^{2}\|\nabla \theta^m(t)\|^2_{L^2} \, dt =0.
\]
So there is no dissipation anomaly in this case on the whole extended interval $[0,2]$. Hence the choice $\nu_m = \nu^{1}_m$ \eqref{vis_l-2} gives us the first desired sequence of solutions to the NSE equations $u^{\nu^1_m}=(v^m, \theta^m)$ in Theorem~\ref{thm:main-2}. Indeed, thanks to Lemma~\ref{l:DA_for_u=DA_for_th}, 
\begin{equation} \label{eq:No_DA_second_theorem}
 \lim_{m \to \infty} \nu_m \int_{0}^{2}\|\nabla u^{\nu^1_m}(t)\|^2_{L^2} \, dt =0,
\end{equation}
so there is no dissipation anomaly. 

We remark that the choice of viscosity \eqref{vis_l-2} would result in a partial dissipation anomaly with continuous energy limit in the Theorem~\ref{thm:main} construction. The reverse energy cascade for $t>1$ resulting in the improved bound \eqref{eq:improved_bound_difference} is the reason that there is no dissipation anomaly for such large values of viscosity.

\bigbreak

\noindent
{\bf The second limiting solutions: total dissipation anomaly.}
Now we turn to the second sequence $\nu^2_m$. Thanks to Theorem~\ref{thm:main}, we know that there is a choice of viscosity, namely $\nu_m= \nu_m^{\mathrm{h}}=m^{\frac52} 2^{-2m}$ (see \eqref{vis_h-final} ), for which the dissipation anomaly is the largest possible on the time interval $[0,1]$:
\[
\lim_{m \to \infty} \|\theta^m(1)\|_{L^2} =0, \qquad  \lim_{m \to \infty} 2 \nu_m \int_{0}^{1}\|\nabla \theta^m(t)\|^2_{L^2} \, dt = {\textstyle \frac12} \|\theta^m(0)\|_{L^2}^2 =1.
\]
By the energy equality $\|\theta^m(t)\|_{L^2} \leq \|\theta^m(1)\|_{L^2} $ for every $m$ and $t \in [1,2]$, and hence 
\begin{equation} \label{control_theta_second_seq_Thm-2} 
\lim_{m \to \infty}  \|\theta^m(t)\|_{L^2} =0, \qquad \forall t \in[1,2].
\end{equation}
Then by the energy equality,
\begin{equation} 
 \lim_{m \to \infty} \nu_m \int_{0}^{2}\|\nabla \theta^m(t)\|^2_{L^2} \, dt =1.
\end{equation}
Hence choosing the viscosity $\nu^2_m = \nu^h_m$ we obtain the second sequence of the NSE solutions $u^{\nu^2_m}=(v^m, \theta^m)$ in Theorem~\ref{thm:main-2} (if necessary, we  pass to subsequences so that the sets of viscosity values do not intersect: $\{\nu^1_m\} \cap \{\nu^2_m\} = \emptyset$). 
By Lemma~\ref{l:DA_for_u=DA_for_th}, 
\begin{equation} \label{eq:Largest_DA_second theorem}
\lim_{m\to \infty} 2\nu_m \int_{0}^{2}\| u^{\nu^2_m}\|^2_{L^2} \, d\tau =\|u_{\mathrm{in}}\|_{L^2}^2=1,
\end{equation}
so this sequence dissipates all the available energy.

We can also see that $u_2$ is the limiting solution of the Euler equations for sequence $u^{\nu^2_m}$. Indeed, \eqref{control_theta_second_seq_Thm-2} implies that 
\[
u^{\nu^2_m} \to u_2, \quad \text{in} \quad C([1,2];L^2).
\]
As always, $u^{\nu^2_m} \to u_2$  in $C_w([0,1];L^2)$ and in $C([0,t];L^2)$ for all $0<t<1$.

\bigbreak

\noindent
{\bf Arbitrary dissipation anomaly on $[0,2]$.} Now using continuous dependence of $\|u^{\nu_m}(2)\|_{L^2}$ on $\nu_m$ as in the proof of the previous theorem, achieving two extreme levels of the dissipation anomaly, \eqref{eq:No_DA_second_theorem} (no dissipation anomaly) for $u^{\nu^1_m}$and \eqref{eq:Largest_DA_second theorem} (total dissipation anomaly) for $u^{\nu^2_m}$, 
for every $e \in [0,1]$, we can find a sequence of viscosities $\nu^e_m\to 0$ such that the corresponding sequence of solutions to the \eqref{eq:NSE} $u^{\nu^e_m}=(v^m,  \theta^m)$ dissipates $e$ amount of energy in the limit of vanishing viscosity:
\[
\lim_{m\to \infty} 2\nu^e_m \int_{0}^{2} \| \nabla u^{\nu^e_m} \|_{L^2}^2 \, dt=e.
\]
Therefore, as in the proof of the previous theorem, there is a decreasing sequence $\nu_m\to 0$ such that the set of the accumulation points of the sequence
$
 \nu_{m} \int_{0}^{2}\|\nabla u^{\nu_m}(t)\|^2_{L^2} \, dt
$  
is the interval $[0,1]$.

We remark that the argument above does not give us much information about the limiting solution for the sequence $u^{\nu_m}(t)$. By the weak convergence, the energy of the limiting solution can not be larger than $1-e$. However, at this point, the possibility that, for instance, for every $e\in (0,1]$, the limiting solution is $u_2(t)$, is not excluded. Our next goal is to prove that $u_1$ and $u_2$ are not the only limiting solutions.

\bigbreak


\noindent 
{\bf Infinitely many limiting solutions of the Euler equations.} Now we show that the backward energy cascade results in infinitely many limiting solutions of the Euler equations.

Thanks to bound \eqref{eq:improved_bound_difference}, for $0<\alpha<1$,
\begin{equation} \label{eq:smallness_of_zeta-infinitely-many-Euler-solutions}
\sup_{t\in[0, 2]}\|\theta^m(t)-\rho^m(t)\|^2_{L^2} \leq \alpha^2,
\end{equation}
provided
\begin{equation} \label{eq:nu_upper_bound-infinitely-many-Euler-solutions}
\nu_m \leq  \nu^{\alpha}_m:= \alpha^4c^{-2}\l_m^{-2} m^{2}.
\end{equation}

Denoting by $P_{\leq \L_m}$ the projection on the frequencies below 
\begin{equation} \label{eq:Inf_Euler_Lambda}
\Lambda_m := \frac{\sqrt{1-\alpha}}{C} \l_m,
\end{equation}
 we note that, since the density $\rho^m$ stays on the highest frequency $\l_m$ on $[t_m,2-t_{m}]$, by \eqref{eq:estimates_rho},
\[
\begin{split}
\|P_{\leq \L_m}\rho^m(t)\|_{L^2} &\leq \Lambda_m \|P_{\leq \L_m} \rho^m(t)\|_{H^{-1}}\\
& \leq \frac{\sqrt{1-\alpha}}{C} \l_m C \l_m^{-1}\\
& = \sqrt{1-\alpha},
\end{split}
\]
for $t\in [t_m,1]$.
Combined with $\|\rho^m(t)\|^2_{L^2} \equiv 1$ this means
\[
\|P_{> \L_m}\rho^m(t)\|_{L^2} \geq \sqrt{\alpha}, \qquad t\in [t_m,1].
\]
Thus, thanks to \eqref{eq:smallness_of_zeta-infinitely-many-Euler-solutions}, on $[t_m,1]$ we have, by the triangle inequality,  
\[
\begin{split}
\|P_{> \L_m}\theta^m(t)\|_{L^2} &\geq \|P_{> \L_m}\rho^m(t)\|_{L^2} - \|P_{> \L_m}(\rho^m(t)-\theta^m(t))\|_{L^2}\\
&\geq  \sqrt{\alpha}-\alpha,
\end{split}
\]
and hence, by the choice of $\Lambda_m$ in \eqref{eq:Inf_Euler_Lambda},
\begin{equation} \label{eq:lower_bound_on_grad_theta-infinitely-many-Euler-solutions}
\|\nabla \theta^m(t)\|^2_{L^2} \geq \frac{\l_m^{2}}{C^2}(1-\alpha)(\sqrt{\alpha}-\alpha)^2, \qquad t\in [t_m,1].
\end{equation}
Thus,
\[
\begin{split}
\nu_m \int_{0}^{1}\|\nabla \theta^m(t)\|^2_{L^2} \, dt &\geq
\nu_m \int_{t_m}^{1}\|\nabla \theta^m(t)\|^2_{L^2} \, dt\\
&\geq \frac{2\nu_m}{(m+1)^2} \frac{\l_m^{2}}{C^2}(1-\alpha)(\sqrt{\alpha}-\alpha)^2.
\end{split}
\]
So the choice of the viscosity $\nu_m=\nu^{\alpha}_m$ defined in \eqref{eq:nu_upper_bound-infinitely-many-Euler-solutions} yields
\[
2\nu_m \int_{0}^{1}\|\nabla \theta^m(t)\|^2_{L^2} \, dt \geq k\alpha^4 (1-\alpha)(\sqrt{\alpha}-\alpha)^2,
\]
for some absolute positive constant $k$.
Now thanks to the energy equality, for $t\in[1,2]$,
\begin{equation} \label{eq:bound_energy_inf_euler_proof}
\begin{split}
\|\theta^m(t)\|^2_{L^2} &\leq \|\theta^m(0)\|^2_{L^2} -  2\nu_m \int_{0}^{1}\|\nabla \theta^m(t)\|^2_{L^2} \, dt\\
 &\leq 1- k\alpha^4(1-\alpha)(\sqrt{\alpha}-\alpha)^2.
\end{split}
\end{equation}

Now by Lemma~\ref{l:limiting_weak_solution}, there is a subsequence $m_j \to \infty$, such that
\[
\theta^{m_j} \to \bar \theta \qquad  C_w([0,T];L^2),
\]
for some weak solution $\bar \theta$ of the transport equation
\[
\p_t \bar \theta + v \cdot \nabla \bar \theta =0.
\]
Due to the weak convergence of $\theta^{m_j}$ to $\bar \theta$, by \eqref{eq:bound_energy_inf_euler_proof} we have
\begin{equation} \label{eq:upper_bound_bar_theta-infinitely-many-Euler-solutions}
\begin{split}
\|\bar \theta(t)\|^2_{L^2} &\leq \liminf_{j \to \infty} \|\theta^{m_j}(t)\|^2_{L^2}\\
&\leq 1- k\alpha^4(1-\alpha)(\sqrt{\alpha}-\alpha)^2,
\end{split}
\end{equation}
for every $t\in[1,2]$.

On the other hand, recall that $\displaystyle\sup_{t\in[0,2]}\|\rho^{m_j}(t)-\theta^{m_j}(t)\|_{L^2} \leq \alpha$  by \eqref{eq:smallness_of_zeta-infinitely-many-Euler-solutions}, and hence, by the triangle inequality, 
\begin{equation} \label{eq:lower_bound_bar_theta-infinitely-many-Euler-solutions}
\begin{split}
\|P_{\leq \L}\bar\theta(t)\|_{L^2} & \geq
\|P_{\leq \L}\rho^m(t)\|_{L^2} - \|P_{\leq \L}(\rho^{m_j}(t)-\theta^{m_j}(t))\|_{L^2} - \|P_{\leq \L}(\theta^{m_j}(t)-\bar\theta(t))\|_{L^2}\\
&\geq \|P_{\leq \L}\rho^{m_j}(t)\|_{L^2} - \alpha - \|P_{\leq \L}(\theta^{m_j}(t)-\bar \theta(t))\|_{L^2},
\end{split}
\end{equation}
for all $t\in[0,2]$. By the convergence of $\rho^{m_j}$ to $\rho_1$ and $\theta^{m_j}$ to $\bar \theta$ in $C([0,2];L^2_w)$, 
\[
\lim_{j\to \infty}\|P_{\leq \L}\rho^{m_j}(t)\|_{L^2} = \|P_{\leq \L}\rho_1\|_{L^2}, \qquad \lim_{j\to \infty}\|P_{\leq \L}(\theta^{m_j}(t)-\bar \theta(t))\|_{L^2} =0.
\]
Then we first take the limit of \eqref{eq:lower_bound_bar_theta-infinitely-many-Euler-solutions} as $j\to \infty$ obtaining
\[
\|P_{\leq \L}\bar\theta(t)\|_{L^2}  \geq
\|P_{\leq \L}\rho_1(t)\|_{L^2} - \alpha, \qquad t\in[0,2],
\]
and then take the limit as $\L \to \infty$ to conclude that for all $t\in [0,1) \cup(1,2]$,
\begin{equation} \label{eq:lower_bound_bar_theta-infinitely-many-Euler-solutions-2}
\begin{split}
\|\bar\theta(t)\|_{L^2}  &\geq \|\rho_1(t)\|_{L^2} - \alpha\\
&\geq 1 - \alpha,
\end{split}
\end{equation}
where we used $\|\rho_1(t)\|_{L^2}\equiv1$ on $[0,1) \cup(1,2]$, see \eqref{eq:energy_of_rho1_and_rho2}.

Now combining bounds \eqref{eq:upper_bound_bar_theta-infinitely-many-Euler-solutions} and \eqref{eq:lower_bound_bar_theta-infinitely-many-Euler-solutions-2} we get
\begin{equation} \label{eq:double_bounds_on_bar_theta}
(1 - \alpha)^2 \leq \|\bar\theta(t)\|_{L^2}^2 \leq E(t) \leq 1- k\alpha^4(1-\alpha)(\sqrt{\alpha}-\alpha)^2, \qquad t \in(1,2],
\end{equation}
where
\[
E(t)=  \liminf_{m \to \infty} \|\theta^m(t)\|^2_{L^2}.
\]

We also know that the energy of the limiting solution $\|\bar \theta(t)\|^2_{L^2}$ is constant on $(1,2]$, but the above argument does not yield its exact value. It is plausible that $\theta^m$ does not converges strongly in $L^2$ to $\bar \theta$ on $(1,2]$ as some energy might get trapped on high modes (after reaching frequency $\l_m$) and does not come back to low modes with the rest of the solution. Nevertheless, \eqref{eq:double_bounds_on_bar_theta} implies the existence of infinitely many limiting solutions of the transport equation $\bar \theta$. For instance, choosing values of $\alpha$ along some sequence $\alpha_n \to 0$, $\alpha_n \in (0,1)$, $n=3,4,\dots$, thanks to \eqref{eq:double_bounds_on_bar_theta}, we get a sequence of solutions $\bar \theta_n(t)$ with $\|\bar \theta_n(2)\|^2_{L^2}<1$ for all $n$, and
\[
 \lim_{n\to \infty} \|\bar \theta_n(2)\|^2_{L^2} =1.
\]
The corresponding limiting weak solutions of the Euler equation are
\[
u_n(x,t)=
\begin{cases}
u_1(x,t), & t\in[0,1],\\
(v(x_1,x_2,t),\bar \theta_n), & t \in(1,2].
\end{cases}
\]
Since $v(2)=0$, the above solutions of the Euler equation also satisfy
\[
\|u_n(2)\|^2_{L^2}<1, \qquad  \lim_{n\to \infty} \|u_n(2)\|^2_{L^2} =1, \qquad n=3,4,\dots,
\] 
which concludes the proof of Theorem~\ref{thm:main-2}.
\end{proof}

\section{Dissipation Anomaly for long time averages: Proof of Theorem~\ref{thm:DA_in_FT}}


\begin{proof}[Proof of Theorem~\ref{thm:DA_in_FT}]
We start with the family $\tilde u^{\nu}$ of solution to \eqref{eq:NSE} from Theorem~\ref{thm:main} that exhibits the total dissipation anomaly on time interval $[0,1]$:
\begin{equation} \label{eq:DA_proof_DAT}
\lim_{\nu\to 0} 2\nu \int_{0}^{1} \| \nabla \tilde u^{\nu} \|_{L^2}^2 \, dt=1.
\end{equation}
In fact, all the dissipation anomaly occurs before the blow-up time, i.e., 
\begin{equation} \label{eq:DA_proof_DAT-t}
\lim_{\nu\to 0} 2\nu \int_{t}^{1} \| \nabla \tilde u^{\nu} \|_{L^2}^2 \, d\tau=1, \qquad \forall t \in[0,1).
\end{equation}
This family is constructed by taking $\tilde \theta^m$, the unique smooth solution of the  advection diffusion equation
\begin{equation} \label{eq:ADE_proof_DAT}
\p_t \tilde \theta^m + \tilde v^m \cdot \nabla \tilde \theta^m = \nu_m \Delta \tilde \theta^m,
\end{equation}
with $\tilde v^m$ defined in \eqref{eq:def_tilde_v}, initial data $\tilde \theta^m(0)=\rho_{\mathrm{in}}$, and an appropriate choice of viscosity $\nu_m = \nu_m^{\mathrm{h}}$ defined in  \eqref{vis_h-final}. Then
\begin{equation} \label{eq:DA_proof_DAT_theta}
\lim_{m\to \infty} \|\tilde \theta^m(1)\|_{L^2}=0, \qquad \lim_{m\to \infty} 2\nu_m \int_{t}^{1} \| \nabla \tilde \theta^m \|_{L^2}^2 \, d\tau=1, \qquad \forall t \in[0,1)
\end{equation}
and
\[
\tilde u^{\nu_m}(t) = (\tilde v^m(t), \tilde \theta^m(t)),
\]
gives the desired sequence of solutions to the \eqref{eq:NSE} satisfying \eqref{eq:DA_proof_DAT}. 

Now our goal is to extend $\tilde u^{\nu_m}(t)\chi_{[1-\frac{\tau}{4},1+\frac{\tau}{4}]}$ to the whole real line making sure that the resulting force converges in $L^\infty(\RR;L^2)$.

For $\tau \in(0,1)$ we choose a cutoff function $\eta \in C^\infty(\RR)$ such that
\begin{enumerate}
\item $\eta(t)=0$ for $t \in (-\infty, 1-\frac{\tau}{3}] \cup [1+\frac{\tau}{3}, \infty)$;
\item $\eta(t)=1$ for $t \in [1-\frac{\tau}{4},1+\frac{\tau}{4}]$,
\end{enumerate}
and a nondecreasing cutoff functions $\tilde \eta \in C^\infty(\RR)$ such that   
\begin{enumerate}
\item $\tilde \eta(t)=0$ for $t \in (-\infty, 1-\frac{\tau}{2}]$;
\item $\tilde\eta(t)=1$ for $t \in [1-\frac{\tau}{3},\infty]$.
\end{enumerate}

The density and the drift are defined as $\tau$-peroidizations of their cutoffs: 
\[
\theta^m(t):= \sum_{n \in \ZZ} \tilde \theta^m(t+\tau n)  \eta(t+\tau n), \qquad v^m(t):= \sum_{n \in \ZZ} \tilde v^m(t+\tau n) \tilde \eta(t+\tau n).
\]
Note that
\begin{equation} \label{eq:small_energy_drift}
\sup_m\|v^m\|_{L^\infty(\RR, L^2)} \to 0 \quad \text{as} \quad  \tau \to 0.
\end{equation}

Now the periodized density $\theta^m(t)$ still satisfies the advection diffusion equation, but with a force appearing due to the cutoff function $\eta(t)$:
\begin{equation} 
\p_t  \theta^m +  v^m \cdot  \nabla \theta^m = \nu_m \Delta  \theta^m + h^m.
\end{equation}
Note that
\[
\Supp h^m \chi_{[1-\frac{\tau}{2},1+\frac{\tau}{2}]} \subset \Supp \eta',
\]
for all $m$. Indeed, for $t\in[0,2]$,
\begin{equation} \label{eq:h^m_in_terms_of_eta'}
\begin{split}
h^m &= \p_t  \theta^m +  v^m \cdot  \nabla \theta^m - \nu_m \Delta  \theta^m\\
&=\eta'(t)\tilde\theta^m + \eta(t)(\p_t  \tilde\theta^m +  v^m \cdot  \nabla \tilde\theta^m - \nu_m \Delta  \tilde\theta^m)\\
&=\eta'(t)\tilde\theta^m.
\end{split}
\end{equation}
Note that $\Supp \eta' \subset [1-\frac{\tau}{3}, 1- \frac{\tau}{4}] \cup [1+\frac{\tau}{4}, 1+ \frac{\tau}{3}]$. We first focus on the time interval $[1-\frac{\tau}{3}, 1- \frac{\tau}{4}]$ where the density $\tilde \theta^m$ is under good control as it is still mostly on low modes. Recall that for the choice of viscosity used here $\nu_m = \nu_m^{\mathrm{h}}$ we have, thanks to \eqref{eq:smallness_of_zeta},
\[
\lim_{m\to \infty} \|\tilde \theta^m - \rho^m\|_{L^\infty(0,t;L^2)} =0,
\]
for every $t \in (0,1)$. Since for all large enough $m$,
\[
\rho^m \chi_{[1-\frac{\tau}{3}, 1- \frac{\tau}{4}]} = \tilde \rho \chi_{[1-\frac{\tau}{3}, 1- \frac{\tau}{4}]},
\]
for $\tilde \rho$ defined in \eqref{eq:def_tilde_rho-limit}, we have 
\[
h^m  \chi_{[1-\frac{\tau}{3}, 1- \frac{\tau}{4}]} \to \eta'(t) \tilde \rho \chi_{[1-\frac{\tau}{3}, 1- \frac{\tau}{4}]} \qquad \text{in} \quad L^{\infty}(1-{\textstyle \frac{\tau}{3}}, 1- {\textstyle \frac{\tau}{4}};L^2).
\]

Now let us consider the time interval $[1+\frac{\tau}{4}, 1+ \frac{\tau}{3}]$, where the density $\tilde \theta^m$ is already on high modes, but it is small thanks to the total dissipation anomaly. Using the expression for $h^m$ \eqref{eq:h^m_in_terms_of_eta'} we obtain
\[
\begin{split}
\|h ^m(t) \chi_{[1+\frac{\tau}{4}, 1+ \frac{\tau}{3}]}(t)\|_{L^2} &\leq \|\eta'\|_{L^\infty} \sup_{t\in [1+\frac{\tau}{4}, 1+ \frac{\tau}{3}]} \|\tilde\theta^m(t)\|_{L^2}\\
& \lesssim \|\tilde\theta^m(1)\|_{L^2}\\
& \to 0,
\end{split}
\]
as $m \to \infty$, where we used total dissipation anomaly \eqref{eq:DA_proof_DAT_theta}, which is the essential ingredient in this construction.

Now we can conclude that 
\[
h^m   \to h   \qquad \text{in} \quad L^{\infty}(\RR;L^2),
\]
where
\[
h= \sum_{n \in 2\ZZ} \eta'(t-n) \tilde \rho(t-n) \chi_{[1-\frac{\tau}{3}, 1- \frac{\tau}{4}]}(t-n).
\]

Finally, we define
\[
u^{\nu_m}(t) = (v^m(t), a_m\theta^m(t)),
\]
where the amplitudes $a_m>0$ are such that 
\[
\frac{1}{\tau}\int_{1-\frac{\tau}{2}}^{1+\frac{\tau}{2}} \|u^{\nu_m}\|_{L^2}^2\, dt = U^2, 
\]
for every $m$. This can be done since the energy of the drift can be made arbitrary small with a choice of small $\tau$ thanks to \eqref{eq:small_energy_drift}. It is easy to check that there exists $a>0$ (the amplitude for the limiting solution), such that
\[
\lim_{m\to \infty} a_m =a.
\]
Now we can verify that $u^{\nu_m}(t)$ is the desired sequence of time-periodic solutions to the \eqref{eq:NSE}. Indeed, since $\theta^{\nu_m}(t)$ coincides with $\tilde \theta^{\nu_m}(t)$ on $[1- \frac{\tau}{4}, 1]$,
we have, by \eqref{eq:DA_proof_DAT_theta},
\[
\begin{split}
\lim_{m\to \infty} 2\nu_m \lim_{T \to \infty} \frac{1}{T}\int_{0}^{T} \| \nabla  u^{\nu_m} \|_{L^2}^2 \, dt &= \lim_{m\to \infty} \frac{2\nu}{\tau} \int_{1-\frac{\tau}{2}}^{1+\frac{\tau}{2}} \| \nabla  u^{\nu_m} \|_{L^2}^2 \, dt\\
& = \lim_{m\to \infty} \frac{2\nu a_m^2}{\tau} \int_{1-\frac{\tau}{4}}^{1} \| \nabla  \theta^{\nu_m} \|_{L^2}^2 \, dt\\
&= \lim_{m\to \infty} \frac{2\nu a_m^2}{\tau} \int_{1-\frac{\tau}{4}}^{1} \| \nabla \tilde \theta^{\nu_m} \|_{L^2}^2 \, dt\\
&= \frac{a^2}{\tau}.
\end{split}
\]
 By choosing $\tau=a^2/(U^3 c)$ we get
\[
\frac{\epsilon \ell}{U^3} = c,
\]
with $\ell =1$. The size of the torus can be changed to arbitrary $\ell>0$ via standard rescaling.
\end{proof}

\section{Discontinuity of the limit: proof of Theorem~\ref{thm:discont-intro}}

The goal of this section is to prove that the dissipation anomaly implies the discontinuity of the limit solution
provided a certain frequency localization property is satisfied, reminiscent of Tao's delay mechanism in \cite{MR3486169}. 
Such a frequency localization property is enjoyed by quasi-selfsimilar constructions. More precisely,  we assume that the sequence of the solutions to \eqref{eq:NSE} $u^{\nu_j}(t)$ satisfies the following 
\begin{assumption} \label{assumption} There are sequences $u^{\nu_j}_{\mathrm{Loc}} \to u_{\mathrm{Loc}}$ in $C_w([0,1];L^2)$ and $u^{\nu_j}_{\mathrm{Ons}} \to u_{\mathrm{Ons}}$ in $L^\infty(0,1;L^2)$, such that
\[
u^{\nu_j}(t) = u^{\nu_j}_{\mathrm{Loc}}(t) + u^{\nu_j}_{\mathrm{Ons}}(t),
\]
where $u^{\nu_j}_{\mathrm{Loc}}(t)$ satisfies the following localization property:
\begin{equation} \label{eq:kernel}
\|\Delta_q u^{\nu_j}_{\mathrm{Loc}}(t)\|_{L^2} \leq c \lambda_{|q-\tilde q(j,t)|}^{-\alpha}, \qquad \forall j,q\in \NN, t\in[0,1],
\end{equation}
for some $c>0$, $\alpha>1$, and $\tilde q: \NN \times [0,1] \to \NN$. The remainder $u^{\nu_j}_{\mathrm{Ons}}$ does not exhibit dissipation anomaly
\begin{equation} \label{eq:no_anomaly_in_assumption}
\limsup_{j\to \infty} \nu_j \int_{0}^{1} \| \nabla u^{\nu_j}_{\mathrm{Ons}} \|_{L^2}^2 \, dt =0,
\end{equation}
and its limit belongs to the Onsager class $u_{\mathrm{Ons}} \in L^3(0,1;B^{\frac13}_{3,c_0}) $.
\end{assumption}
In \eqref{eq:kernel}, $\Delta_q u$ is the Littlewood-Paley projection of $u$, or just a rough projection on frequencies in the dyadic shell $[\l_{q-1},\l_{q+1}]$.

The main result is 

\begin{theorem} \label{thm:Discont_of_u}
Let $u^{\nu_j}(t)$ be a sequence of weak solutions to \eqref{eq:NSE} satisfying the energy equality with viscosity $\nu_j \to 0$ and force $f^{\nu_j} \to f$ in $L^1(0,1;L^2)$, converging weakly in $L^2$ to $u \in L^\infty(0,1;L^2)$
\[
u^{\nu_j} \to u \qquad \text{in} \qquad C_w([0,1];L^2),
\]
converging strongly at $t=0$
\[
u^{\nu_j}(0) \to u(0) \qquad \text{in} \qquad L^2,
\]
and exhibiting the dissipation anomaly, i.e.,
\begin{equation} \label{eq:diss_anomaly}
\limsup_{j\to \infty} \nu_j \int_0^1 \| \nabla u^{\nu_j} \|_{L^2}^2 \, dt >0.
\end{equation}
If in addition Assumption~\ref{assumption} holds, then $u(t)$ is discontinuous in $L^2$ at some $t\in[0,1]$.
\end{theorem}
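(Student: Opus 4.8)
The plan is to argue by contradiction: assume $u\in C([0,1];L^2)$ and deduce that the dissipation anomaly \eqref{eq:diss_anomaly} cannot hold. First I would reduce everything to the localized part. By Assumption~\ref{assumption} the remainder $u^{\nu_j}_{\mathrm{Ons}}$ carries no dissipation anomaly, so from $\|\nabla u^{\nu_j}\|_{L^2}^2\le 2\|\nabla u^{\nu_j}_{\mathrm{Loc}}\|_{L^2}^2+2\|\nabla u^{\nu_j}_{\mathrm{Ons}}\|_{L^2}^2$ together with \eqref{eq:no_anomaly_in_assumption} the anomaly is inherited by $u^{\nu_j}_{\mathrm{Loc}}$: after passing to a subsequence, $A:=\lim_{j}\nu_j\int_0^1\|\nabla u^{\nu_j}_{\mathrm{Loc}}\|_{L^2}^2\,dt>0$. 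Since $u^{\nu_j}(0)\to u(0)$ in $L^2$ and $u^{\nu_j}_{\mathrm{Ons}}(0)\to u_{\mathrm{Ons}}(0)$ in $L^2$ (strong $L^\infty(0,1;L^2)$ convergence), also $u^{\nu_j}_{\mathrm{Loc}}(0)\to u_{\mathrm{Loc}}(0)$ strongly in $L^2$; thus $u^{\nu_j}_{\mathrm{Loc}}$ starts frequency-concentrated at bounded frequencies.

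The heart of the matter is a dichotomy extracted from \eqref{eq:kernel} with $\alpha>1$. Writing $\tilde q=\tilde q(j,t)$, summing $\|\Delta_q u^{\nu_j}_{\mathrm{Loc}}(t)\|_{L^2}\le c\lambda_{|q-\tilde q|}^{-\alpha}$ against $\lambda_q^{2}$ and using $\alpha>1$ for summability yields $\|\nabla u^{\nu_j}_{\mathrm{Loc}}(t)\|_{L^2}^2\lesssim\lambda_{\tilde q}^{2}$, while $\|P_{\le\lambda_M}u^{\nu_j}_{\mathrm{Loc}}(t)\|_{L^2}^2\lesssim\lambda_{|\tilde q-M|}^{-2\alpha}$ for every $M\in\NN$. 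Consequently: (a) $\nu_j\|\nabla u^{\nu_j}_{\mathrm{Loc}}(t)\|_{L^2}^2\lesssim\nu_j\lambda_{\tilde q(j,t)}^{2}$, so the lower bound $A>0$ on the total dissipation forces $\lambda_{\tilde q(j,t)}\gtrsim\nu_j^{-1/2}$ (the Kolmogorov scale) on a set $G_j\subset[0,1]$ with $\nu_j\int_{G_j}\|\nabla u^{\nu_j}_{\mathrm{Loc}}\|^2\,dt\gtrsim A$; and (b) on $G_j$ one has $\tilde q(j,t)\to\infty$ uniformly, hence $\|P_{\le\lambda_M}u^{\nu_j}_{\mathrm{Loc}}(t)\|_{L^2}\to0$ for each fixed $M$, i.e. $u^{\nu_j}_{\mathrm{Loc}}(t)\rightharpoonup0$ along those times and its energy escapes to infinity. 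Conversely, as long as the dissipation accumulated on $[0,t]$ by $u^{\nu_j}_{\mathrm{Loc}}$ stays $o(1)$, the bound in (a) forces $\lambda_{\tilde q(j,t)}=o(\nu_j^{-1/2})$, so by \eqref{eq:kernel} essentially all of the energy of $u^{\nu_j}_{\mathrm{Loc}}(t)$ sits on bounded frequencies and, along subsequences, $u^{\nu_j}_{\mathrm{Loc}}(t)\to u_{\mathrm{Loc}}(t)$ strongly with $\|u_{\mathrm{Loc}}(t)\|_{L^2}$ close to $\|u_{\mathrm{Loc}}(0)\|_{L^2}$.

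Combining these two regimes with the strong convergence at $t=0$ and the weak continuity of $u^{\nu_j}_{\mathrm{Loc}}$ in $L^2$, I would run a delay argument in the spirit of Tao's mechanism \cite{MR3486169}: the energy center $\lambda_{\tilde q(j,\cdot)}$ climbs from $O(1)$ at $t=0$ and cannot dissipate an appreciable fraction of the energy until it reaches $\nu_j^{-1/2}$; since for any fixed threshold $\Lambda$ one has $\nu_j\Lambda^2\to0$, the center sweeps past $\Lambda$ on a time set shrinking with $j$, so in the limit there is a single escape time $t_0\in[0,1]$ at which $u_{\mathrm{Loc}}$ still retains (up to $\varepsilon$) its initial energy from the left but has lost it by (or immediately after) $t_0$. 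The argument splits into three cases according to whether this escape is seen in the limit from the left of $t_0$, from the right, or as a continuous collapse across $t_0$ (the last being the scenario realized by the absolutely continuous $E(t)$ subfamily of Theorem~\ref{thm:main}), and in each case one gets a downward jump of $\|u_{\mathrm{Loc}}(\cdot)\|_{L^2}$ at $t_0$. Finally, since $u_{\mathrm{Ons}}\in L^3(0,1;B^{1/3}_{3,c_0})$ is the strong $L^\infty(0,1;L^2)$ limit of the anomaly-free remainders, a standard Constantin, E, and Titi energy-balance argument makes $\|u_{\mathrm{Ons}}(\cdot)\|_{L^2}$ (absolutely) continuous, so $u=u_{\mathrm{Loc}}+u_{\mathrm{Ons}}$ inherits the jump at $t_0$, contradicting $u\in C([0,1];L^2)$.

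The step I expect to be the main obstacle is the delay argument itself: converting the heuristic ``dissipation forces the frequency center to the Kolmogorov scale, and strong $L^2$ compactness holds strictly below it'' into a statement that isolates the escape time uniformly over the shrinking dissipation-time sets $G_j$. The delicate points are that the energy in the dyadic band around $\lambda_{\tilde q(j,t)}$ need not evolve monotonically, so one must control re-entries of the center into lower frequencies, and that one must rule out the localized energy reappearing on bounded frequencies after escaping — the inverse-cascade phenomenon of Theorem~\ref{thm:main-2} — which is exactly where the exponent $\alpha>1$ in \eqref{eq:kernel} and the weak continuity of $u^{\nu_j}_{\mathrm{Loc}}$ in $L^2$ must be used; a secondary subtlety is justifying the strong $L^2$-continuity of the Onsager part $u_{\mathrm{Ons}}$ used at the very end.
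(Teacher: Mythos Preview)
Your outline has the right high-level picture (contradiction, frequency dichotomy, three-case escape), but two of the technical claims you rely on are not available, and the paper's proof is organized precisely to avoid them.

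\textbf{First gap: the ``converse'' in your dichotomy is wrong.} From the kernel bound you correctly get the one-sided estimate $\nu_j\|\nabla u^{\nu_j}_{\mathrm{Loc}}(t)\|_{L^2}^2 \lesssim \nu_j\lambda_{\tilde q(j,t)}^2$. This says large dissipation forces $\tilde q$ to be large; it does \emph{not} say small accumulated dissipation forces $\tilde q$ to be small. The kernel assumption \eqref{eq:kernel} is an upper bound on each dyadic piece and carries no lower bound on the energy near $\lambda_{\tilde q}$, so $\tilde q(j,t)$ could sit at $\nu_j^{-1/2}$ or higher while $\|u^{\nu_j}_{\mathrm{Loc}}(t)\|_{L^2}$ (and hence the dissipation) is tiny. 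Your delay argument needs this converse to pin down the escape time, and it is simply unavailable.

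\textbf{Second gap: there is no energy balance for the pieces.} You repeatedly treat $u^{\nu_j}_{\mathrm{Loc}}$ as if it carried its own energy identity (``$\|u_{\mathrm{Loc}}(t)\|_{L^2}$ close to $\|u_{\mathrm{Loc}}(0)\|_{L^2}$'', ``dissipation accumulated by $u^{\nu_j}_{\mathrm{Loc}}$'') and likewise invoke CET for $u_{\mathrm{Ons}}$. Neither piece is a solution of anything; only the full $u^{\nu_j}$ satisfies the energy equality and only the full limit $u$ is a weak Euler solution to which CET applies.

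The paper's proof sidesteps both issues by never tracking the energy of the pieces. The kernel is used only to extract information about the \emph{limit} $u_{\mathrm{Loc}}$: Lemma~\ref{lemma1} (dissipation anomaly on $[t_1,t_2]$ forces $u_{\mathrm{Loc}}(T)=0$ for some $T\in[t_1,t_2]$) and Lemma~\ref{lemma2} (failure of strong convergence at $T$ forces $u_{\mathrm{Loc}}(T)=0$). The contrapositive of Lemma~\ref{lemma2} is the real engine: whenever $u_{\mathrm{Loc}}(t)\ne 0$, the full sequence $u^{\nu_j}(t)\to u(t)$ strongly. All energy estimates are then run for the full $u^{\nu_j}$; CET is applied only on intervals where $u_{\mathrm{Loc}}\equiv 0$, so that $u=u_{\mathrm{Ons}}$ there and $u$ is a genuine Euler solution in the Onsager class. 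The three cases you anticipate are exactly the cases in the paper, but they are distinguished by the structure of the zero set $\{t:u_{\mathrm{Loc}}(t)=0\}$ near an interval supporting dissipation anomaly, not by the behavior of $\tilde q(j,\cdot)$.
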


\begin{remark}
In fact, the above result also holds for Leray-Hopf solutions, i.e., weak solutions satisfying the energy inequality starting from almost all initial data.
\end{remark}

The proof of the theorem relies on the following lemmas.

\begin{lemma} \label{lemma1}
Let $u^{\nu_j}(t)$ be a sequence of weak solutions to the NSE satisfying Assumption~\ref{assumption}  with viscosity $\nu_j \to 0$, converging weakly in $L^2$ to $u \in L^\infty(0,1;L^2)$
\[
u^{\nu_j} \to u \qquad \text{in} \qquad C_w([0,1];L^2).
\]
Then for every interval $[t_1,t_2] \in[0,1]$ exhibiting the dissipation anomaly, i.e.,
\begin{equation} \label{eq:diss_anomaly_lamma1}
\limsup_{j\to \infty} \nu_j \int_{t_1}^{t_2} \| \nabla u^{\nu_j} \|_{L^2}^2 \, dt >0,
\end{equation}
there exists $T\in [t_1,t_2]$ with
\[
u_{\mathrm{Loc}}(T)=0.
\]
\end{lemma}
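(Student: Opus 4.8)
The plan is to argue by contradiction. Suppose $u_{\mathrm{Loc}}(t)\neq 0$ for every $t\in[t_1,t_2]$; I will show this forces $\nu_j\int_{t_1}^{t_2}\|\nabla u^{\nu_j}\|_{L^2}^2\,dt\to 0$, contradicting \eqref{eq:diss_anomaly_lamma1}. The heuristic is that the localization bound \eqref{eq:kernel} pins $u^{\nu_j}_{\mathrm{Loc}}(t)$, up to a rapidly decaying tail, inside the single dyadic shell of size $\lambda_{\tilde q(j,t)}$; a dissipation anomaly on $[t_1,t_2]$ can then only be produced by $\tilde q(j,t)$ running off to infinity, and that is incompatible with $u^{\nu_j}_{\mathrm{Loc}}$ retaining a nonzero weak limit on $[t_1,t_2]$. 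Throughout, write $P_{\le Q}:=\sum_{q\le Q}\Delta_q$ and note that $P_{\le Q}$ has finite rank on $\TT^3$.

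First I would extract from \eqref{eq:kernel} three elementary consequences, all with constants depending only on $c$ and $\alpha$: the uniform bound $\|u^{\nu_j}_{\mathrm{Loc}}(t)\|_{L^2}\lesssim 1$; the low-frequency bound $\|P_{\le Q}u^{\nu_j}_{\mathrm{Loc}}(t)\|_{L^2}^2\lesssim \lambda_{(\tilde q(j,t)-Q)_+}^{-2\alpha}$ for every $Q$; and the gradient bound $\|\nabla u^{\nu_j}_{\mathrm{Loc}}(t)\|_{L^2}^2\lesssim \lambda_{\tilde q(j,t)}^2$. The last of these is where $\alpha>1$ is essential: summing \eqref{eq:kernel} against $\lambda_q^2$ and setting $p=q-\tilde q(j,t)$ produces the series $\sum_p \lambda_{\tilde q(j,t)}^2\,2^{2p-2\alpha|p|}$, which converges precisely because $\alpha>1$.

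Next I would turn the pointwise non-vanishing hypothesis into a uniform one. Since $u^{\nu_j}_{\mathrm{Loc}}\to u_{\mathrm{Loc}}$ in $C_w([0,1];L^2)$, the function $t\mapsto\|u_{\mathrm{Loc}}(t)\|_{L^2}$ is lower semicontinuous, so under the contradiction hypothesis $a:=\inf_{[t_1,t_2]}\|u_{\mathrm{Loc}}\|_{L^2}>0$. For each $t$ choose $Q(t)$ with $\|P_{\le Q(t)}u_{\mathrm{Loc}}(t)\|_{L^2}^2>a^2/2$; since $s\mapsto\|P_{\le Q(t)}u_{\mathrm{Loc}}(s)\|_{L^2}^2$ is continuous, this persists on a neighbourhood of $t$, and a finite subcover of $[t_1,t_2]$ together with $Q^*:=\max_i Q(t_i)$ yields a single cutoff with $\|P_{\le Q^*}u_{\mathrm{Loc}}(s)\|_{L^2}^2>a^2/2$ for all $s\in[t_1,t_2]$. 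Because $P_{\le Q^*}$ is finite rank, $C_w$-convergence upgrades to strong convergence uniform in time, so $\|P_{\le Q^*}u^{\nu_j}_{\mathrm{Loc}}(s)\|_{L^2}^2>a^2/4$ for all $s\in[t_1,t_2]$ once $j$ is large. Comparing with the low-frequency bound from the previous step gives $\lambda_{(\tilde q(j,s)-Q^*)_+}^{-2\alpha}\gtrsim a^2$, hence $\tilde q(j,s)\le Q^*+C_1$ with $C_1=C_1(a,c,\alpha)$, uniformly in $s\in[t_1,t_2]$ and all large $j$. Then the gradient bound gives $\|\nabla u^{\nu_j}_{\mathrm{Loc}}(s)\|_{L^2}^2\lesssim\lambda_{Q^*+C_1}^2$ uniformly, so $\nu_j\int_{t_1}^{t_2}\|\nabla u^{\nu_j}_{\mathrm{Loc}}\|_{L^2}^2\,dt\to 0$; combined with \eqref{eq:no_anomaly_in_assumption} and $\|\nabla u^{\nu_j}\|_{L^2}^2\le 2\|\nabla u^{\nu_j}_{\mathrm{Loc}}\|_{L^2}^2+2\|\nabla u^{\nu_j}_{\mathrm{Ons}}\|_{L^2}^2$, this yields $\nu_j\int_{t_1}^{t_2}\|\nabla u^{\nu_j}\|_{L^2}^2\,dt\to 0$, the desired contradiction. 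Hence $u_{\mathrm{Loc}}(T)=0$ for some $T\in[t_1,t_2]$.

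I expect the main obstacle to be exactly this uniformization step: at a fixed $t$, a nonzero weak limit only bounds $\tilde q(j,t)$ along a subsequence and with a $t$-dependent bound, whereas the dissipation integral in \eqref{eq:diss_anomaly_lamma1} averages over $s$ and would be completely insensitive to $\tilde q(j,s)\to\infty$ on a Lebesgue-null set of times. The lower-semicontinuity/compactness/finite-rank package is what converts the pointwise statement into a bound uniform in both $s$ and $j$, and without it the argument fails; a secondary point to watch is that the tail exponent in \eqref{eq:kernel} must be $>1$ for the gradient of the localized part to be controlled at all.
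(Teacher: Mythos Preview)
Your proof is correct, but it proceeds by a genuinely different route from the paper's. The paper argues \emph{directly}: from \eqref{eq:diss_anomaly_lamma1} and \eqref{eq:no_anomaly_in_assumption} it extracts, for each $j$ along a subsequence, a single time $t_j\in[t_1,t_2]$ at which $\nu_j\|\nabla u^{\nu_j}_{\mathrm{Loc}}(t_j)\|_{L^2}^2\ge e>0$; the gradient bound $\|\nabla u^{\nu_j}_{\mathrm{Loc}}(t_j)\|_{L^2}^2\lesssim\lambda_{\tilde q(j,t_j)}^2$ then forces $\tilde q(j,t_j)\to\infty$, and passing to a further subsequence with $t_j\to T$ one reads off $\Delta_q u_{\mathrm{Loc}}(T)=0$ for every $q$ from \eqref{eq:kernel} and the $C_w$-convergence. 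Your contrapositive instead assumes $u_{\mathrm{Loc}}$ is nowhere zero on $[t_1,t_2]$ and, via the lower-semicontinuity/compactness/finite-rank package, upgrades this to a bound $\tilde q(j,s)\le Q^*+C_1$ that is \emph{uniform in both $s$ and $j$}, which kills the dissipation integral outright. The paper's argument is shorter because it never needs uniformity in $s$ (one well-chosen time per $j$ suffices), while your argument yields a slightly stronger intermediate statement --- a uniform gradient bound on $u^{\nu_j}_{\mathrm{Loc}}$ over the whole interval --- which is in the spirit of how the lemma is later applied in Cases~I and~II of the proof of Theorem~\ref{thm:Discont_of_u}.
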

\begin{proof}
Thanks to \eqref{eq:diss_anomaly_lamma1} and \eqref{eq:no_anomaly_in_assumption}, there exist $e>0$, a sequence $j_n \to \infty$, and $t_{j_n} \in [0,1]$, such that
\[
\nu_{j_n} \|\nabla u^{\nu_{j_n}}_{\mathrm{Loc}}(t_{j_n}) \|_{L^2}^2 \geq e, \qquad \forall n.
\]
By compactness of $[0,1]$, there exists $T \in [0,1]$, such that, passing to another subsequence and dropping subindexes, we have 
\begin{equation} \label{eq:large_diss}
t_j \to T \qquad \text{and} \qquad \nu_j \|\nabla u^{\nu_j}_{\mathrm{Loc}}(t_{j}) \|_{L^2}^2 \geq e, \qquad \forall j.
\end{equation}
Now using assumption \eqref{eq:kernel},
\[
\begin{split}
\|\nabla u^{\nu_j}_{\mathrm{Loc}}(t_{j}) \|_{L^2}^2 &\lesssim \sum_{q} \lambda_q^{2} \|\Delta_q u^{\nu_j}_{\mathrm{Loc}}(t_j)\|_{L^2}^2\\
&\lesssim \sum_{q}  \lambda_q^{2}c^2\lambda_{|q-\tilde q(j,t_j)|}^{-2\alpha} \\ 
&\lesssim \lambda_{\tilde q(j,t_j)}^2,  
\end{split}
\]
where we needed $\alpha >1$.
Combining this with \eqref{eq:large_diss} we obtain
\[
\lambda_{\tilde q(j,t_j)} \gtrsim \nu_j^{-\frac12}, \qquad \forall j.
\]
In particular, $\tilde q(j,t_j) \to \infty$ as $j\to \infty$. Now we will use the above estimate in \eqref{eq:kernel} to infer that for every fixed $q$ (and $j$ large enough such that   $q\leq \tilde q(j,t)$),
\begin{equation} \label{eq:convergence_to_0_AssumptionI}
\begin{split}
\|\Delta_q u^{\nu_j}_{\mathrm{Loc}}(t_j)\|_{L^2} &\lesssim  \lambda_{q}^\alpha \lambda_{\tilde q(j,t_j)}^{-\alpha}\\
&\lesssim \lambda_{q}^\alpha \nu_j^{\frac12 \alpha}\\
&\to 0,
\end{split}
\end{equation}
as $j \to \infty$. 

Finally, since $t_j \to T$ and $u^{\nu_j}_{\mathrm{Loc}} \to u_{\mathrm{Loc}}$ in $C_w([0,1];L^2)$, \eqref{eq:convergence_to_0_AssumptionI} implies that $\Delta_q u_{\mathrm{Loc}}(T) =0$ for every $q$, and hence $u_{\mathrm{Loc}}(T) = 0$.

\end{proof}

\begin{lemma} \label{lemma2}
Let $u^{\nu_j}(t)$ be a sequence of weak solutions to \eqref{eq:NSE} satisfying Assumption~\ref{assumption} converging weakly in $L^2$ to $u \in L^\infty(0,1;L^2)$
\[
u^{\nu_j} \to u \qquad \text{in} \qquad C_w([0,1];L^2).
\]
If
\begin{equation} \label{eq:energy_jump}
\limsup_{j\to \infty} \|u^{\nu_j}(T)\|_{L^2} > \|u(T)\|_{L^2},
\end{equation}
i.e., $u^{\nu_j}(T)$ does not converge to $u(T)$ strongly in $L^2$
at some $T\in[0,1]$, then
\[
u_{\mathrm{Loc}}(T)=0.
\]
\end{lemma}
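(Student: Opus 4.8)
The plan is to adapt the dichotomy from the proof of Lemma~\ref{lemma1} to the single time $T$. Assume $\limsup_{j\to\infty}\|u^{\nu_j}(T)\|_{L^2}>\|u(T)\|_{L^2}$ and pass to a subsequence (not relabeled) along which $\|u^{\nu_j}(T)\|_{L^2}$ converges to this $\limsup$. The quantity to control is the localization index $\tilde q(j,T)$: either it stays bounded along a further subsequence, or it tends to infinity. I would treat these two cases separately, the first by a compactness contradiction and the second exactly as in Lemma~\ref{lemma1}.

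First I would rule out the bounded case. If $\tilde q(j,T)$ is bounded, a further subsequence makes it constant, say $\tilde q(j,T)\equiv q_0$, and then \eqref{eq:kernel} gives $\|\Delta_q u^{\nu_j}_{\mathrm{Loc}}(T)\|_{L^2}\le c\lambda_{|q-q_0|}^{-\alpha}$ for all $q,j$. Hence for $Q>q_0$ the high-frequency tails obey $\|P_{>Q}u^{\nu_j}_{\mathrm{Loc}}(T)\|_{L^2}\lesssim\big(\sum_{q>Q}\lambda_{|q-q_0|}^{-2\alpha}\big)^{1/2}$, a bound independent of $j$ that vanishes as $Q\to\infty$. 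The sequence $\{u^{\nu_j}_{\mathrm{Loc}}(T)\}_j$ is bounded in $L^2$ (from $u^{\nu_j}(T)\rightharpoonup u(T)$ and the strong convergence $u^{\nu_j}_{\mathrm{Ons}}(T)\to u_{\mathrm{Ons}}(T)$), and each low-frequency truncation $P_{\le Q}u^{\nu_j}_{\mathrm{Loc}}(T)$ lives in a fixed finite-dimensional space, so a diagonal extraction over $Q$ produces a subsequence along which $u^{\nu_j}_{\mathrm{Loc}}(T)$ converges strongly in $L^2$; the limit must be $u_{\mathrm{Loc}}(T)$ by the weak convergence in $C_w([0,1];L^2)$. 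Combined with $u^{\nu_j}_{\mathrm{Ons}}(T)\to u_{\mathrm{Ons}}(T)$ and uniqueness of weak limits (so $u(T)=u_{\mathrm{Loc}}(T)+u_{\mathrm{Ons}}(T)$), this yields $u^{\nu_j}(T)\to u(T)$ strongly in $L^2$, hence $\|u^{\nu_j}(T)\|_{L^2}\to\|u(T)\|_{L^2}$, contradicting the choice of subsequence. Therefore $\tilde q(j,T)\to\infty$.

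It then remains to repeat the final step of Lemma~\ref{lemma1}: since $\tilde q(j,T)\to\infty$, for every fixed $q$ we have $|q-\tilde q(j,T)|\to\infty$, so \eqref{eq:kernel} forces $\|\Delta_q u^{\nu_j}_{\mathrm{Loc}}(T)\|_{L^2}\le c\lambda_{|q-\tilde q(j,T)|}^{-\alpha}\to 0$. As $\Delta_q$ is continuous and $u^{\nu_j}_{\mathrm{Loc}}(T)\rightharpoonup u_{\mathrm{Loc}}(T)$ in $L^2$, weak lower semicontinuity of the norm gives $\|\Delta_q u_{\mathrm{Loc}}(T)\|_{L^2}=0$ for every $q$, whence $u_{\mathrm{Loc}}(T)=0$.

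I expect the only genuine work to be the bounded-$\tilde q$ case: packaging the uniform-tail bound, the finite-dimensional compactness of the low modes, and the identification of the strong limit with $u_{\mathrm{Loc}}(T)$ into an honest strong-convergence statement, and then checking that this collides with the assumed energy jump. The surviving case ($\tilde q(j,T)\to\infty$) is a verbatim transcription of the argument already carried out for Lemma~\ref{lemma1}.
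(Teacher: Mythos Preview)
Your proposal is correct and follows essentially the same approach as the paper's proof: both argue by the dichotomy on whether $\tilde q(j,T)$ stays bounded, using the uniform tail estimate from \eqref{eq:kernel} to upgrade weak to strong convergence in the bounded case (contradicting the energy jump), and using $\tilde q(j,T)\to\infty$ along a subsequence together with weak convergence to conclude $\Delta_q u_{\mathrm{Loc}}(T)=0$ for all $q$ in the surviving case. Your compactness step is a bit more elaborate than needed---once you have the uniform tail bound, weak convergence of $u^{\nu_j}_{\mathrm{Loc}}(T)$ already implies strong convergence without any diagonal extraction---but this is harmless.
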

\begin{proof}

If there exists $Q \in \NN$ such that $\tilde q(j,T) \leq Q$ for all $j$, then using \eqref{eq:kernel} to estimate the energy on high modes above $q^*>Q$ as
\[
\sum_{q\geq q^*} \|\Delta_q u_{\mathrm{Loc}}^{\nu_j}(T)\|_{L^2}^2 \leq \sum_{q\geq q^*} c^2  \lambda_{|q-\tilde q(j,T)|}^{-2\alpha} \lesssim \lambda_{q^*-Q}^{-2\alpha},
\]
for all $j$, we see that the weak convergence of $u^{\nu_j}_{\mathrm{Loc}}(T)$ to $u_{\mathrm{Loc}}(T)$ implies the strong convergence
\begin{equation} \label{eq:strong_conv_second_lemma_6_5}
u^{\nu_j}_{\mathrm{Loc}}(T) \to u_{\mathrm{Loc}}(T) \qquad \text{in} \quad L^2.
\end{equation}
Since by Assumption~\ref{assumption} $u^{\nu_j}_{\mathrm{Ons}}$ converges in $L^\infty(0,1;L^2)$, \eqref{eq:strong_conv_second_lemma_6_5} automatically implies the strong convergence $u^{\nu_j}(T) \to u(T)$ in $L^2$ contradicting \eqref{eq:energy_jump}. Hence
\[
\limsup_{j\to \infty} \tilde q(j,T) = \infty.
\]
This combined with \eqref{eq:kernel} yields
\[
\liminf_{j \to \infty} \|\Delta_{q_0}u^{\nu_j}_{\mathrm{Loc}}(T)\|_{L^2}  = 0,
\]
for any $ q_0 \in \NN$. Since $u^{\nu_j}_{\mathrm{Loc}}(T)$ converges weakly to $u_{\mathrm{Loc}}(T)$, we have 
\[
u_{\mathrm{Loc}}(T)=0.
\]
\end{proof}

We can now proceed with the proof of the main result.

\begin{proof}[Proof of Theorem~\ref{thm:Discont_of_u}]
First notice that the assumption $f^{\nu_j} \to f$ in $L^1(0,1;L^2)$ together with the weak convergence of $u^{\nu_j}$ to $u$ implies that
\begin{equation} \label{eq:Work_convergence}
\int_{t_0}^t (f^{\nu_j},u^{\nu_j}) \, d\tau = \int_{t_0}^t (f^{\nu_j}-f,u^{\nu_j}) \, d\tau + \int_{t_0}^t (f,u^{\nu_j}) \, d\tau  \to \int_{t_0}^t (f,u) \, d\tau, \qquad \forall 0 \leq t_0 \leq t \leq 1.
\end{equation}



Now suppose to the contrary that
\[
u \in C([0,1];L^2).
\]
Then we will prove the following.

\vspace{0.1in}
\noindent
{\bf Claim.} {\it There exists time $t_1\in[0,1]$ such that
\begin{equation} \label{eq:exists_t_1}
\limsup_{j\to \infty}\|u^{\nu_j}(t_1)\|_{L^2} >\|u(t_1)\|_{L^2}.
\end{equation}}
\vspace{0.1in}

This claim implies a contradiction. Indeed, we first note that Lemma~\ref{lemma2} implies $u_{\mathrm{Loc}}(t_1)=0$ and define
\[
t_0=\inf\{t\in [0,t_1]: u_{\mathrm{Loc}}(\tau)=0 \ \forall \ \tau \in[t,t_1]\}.
\]
Note that $t_0$ might be less than $t_1$, but by the energy $\|u^{\nu_j}(t)\|^2_{L^2}$ cannot have a large increase  on $[t_0,t_1]$ for large $j$. More precisely,  \eqref{eq:exists_t_1}  and the energy equality yield
\begin{equation} \label{eq:Energy_Equality_proof_main_discont}
\begin{split}
\|u(t_1)\|_{L^2}^2  & < \limsup_{j\to \infty}\|u^{\nu_j}(t_1)\|_{L^2}^2\\
 &\leq \limsup_{j\to \infty} \|u^{\nu_j}(t_0)\|_{L^2}^2 +   \lim_{j\to \infty}2\int_{t_0}^{t_1} (f^{\nu_j},u^{\nu_j}) \, d\tau\\
&= \limsup_{j\to \infty} \|u^{\nu_j}(t_0)\|_{L^2}^2  + 2\int_{t_0}^{t_1} (f,u) \, d\tau.
\end{split}
\end{equation}
On the other hand, on $[t_0,t_1]$, since $u_{\mathrm{Loc}}$ is zero, $u=u_{\mathrm{Ons}}$ satisfies the energy equality   as it belongs to the Onsager class $u \in L^3(t_0,t_1;B^{1/3}_{3,c_0})$ (see \cite{MR2422377}) , i.e.,
\[
\|u(t_1)\|_{L^2}^2 = \|u(t_0)\|_{L^2}^2 + 2\int_{t_0}^{t_1} (f,u) \, d\tau.
\]
Combining it with \eqref{eq:Energy_Equality_proof_main_discont} we obtain
\begin{equation} \label{eq:Negative_result_prove_main_inequality}
 \limsup_{j\to \infty} \|u^{\nu_j}(t_0)\|_{L^2}^2 > \|u(t_0)\|_{L^2}^2,
\end{equation}
so there is no strong convergence at $t_0$.

If $t_0=0$, then we get a contradiction with the strong convergence of the initial data. If $t_0>0$, then by definition of $t_0$ there exists a sequence $t_n \to t_0-$ as $n \to \infty$ with $u_{\mathrm{Loc}}(t_n) \ne 0$, and hence by Lemma~\ref{lemma2},
\[
\lim_{j\to \infty} \|u^{\nu_j}(t_n)\|_{L^2} = \|u(t_n)\|_{L^2},
\]
for every $n$. Employing the energy equality one more time,
\[
\begin{split}
\limsup_{j\to \infty}\|u^{\nu_j}(t_0)\|_{L^2}^2 &\leq \limsup_{j\to \infty} \|u^{\nu_j}(t_n)\|_{L^2}^2 +   \lim_{j\to \infty}2\int_{t_n}^{t_0} (f^{\nu_j},u^{\nu_j}) \, d\tau\\
&= \|u(t_n)\|_{L^2}^2  + 2\int_{t_n}^{t_0} (f,u) \, d\tau\\
&\to\|u(t_0)\|_{L^2}^2,
\end{split}
\]
as $n \to \infty$, where we used the continuity of $u(t)$ in $L^2$ to infer the convergence. Thus we obtained a contradiction with  \eqref{eq:Negative_result_prove_main_inequality}.

\vspace{0.1in}
\noindent
{\bf Proof of the claim.}
To prove the above claim, in view of Lemma~\ref{lemma1}, we consider three jointly exhaustive cases.

\vspace{0.1in}
\noindent
{\bf Case I.} 
There exist $0 \leq t_1 < t_2 \leq 1$ with
\[
\limsup_{j\to \infty} \nu_j\int_{t_1}^{t_2} \|\nabla u^{\nu_j}(\tau)\|_{L^2}^2 \, d\tau>0, \qquad u_{\mathrm{Loc}}(t_2)=0, \qquad \text{and} \qquad \|u_{\mathrm{Loc}}(t)\|_{L^2}>0 \qquad \forall t \in [t_1,t_2).
\]

\vspace{0.1in}
Then,  thanks to Lemma~\ref{lemma2},
\begin{equation} \label{eq:convergence_case1}
u^{\nu_j}(t) \to u(t) \qquad \forall t \in [t_1,t_2),
\end{equation}
in $L^2$. Also, due to Lemma~\ref{lemma1}, for any for any $t\in[t_1, t_2)$,
\[
\nu_j\int_{t_1}^{t} \|\nabla u^{\nu_j}(\tau)\|_{L^2}^2 \, d\tau \to 0, 
\]
as $j\to \infty$.
Then for all $t \in[t_1,t_2)$,
\[
\limsup_{j\to \infty} 2\nu_j\int_{t}^{t_2} \|\nabla u^{\nu_j}(\tau)\|_{L^2}^2 \, d\tau = \limsup_{j\to \infty} 2\nu_j\int_{t_1}^{t_2} \|\nabla u^{\nu_j}(\tau)\|_{L^2}^2 \, d\tau =:E >0.
\]

Now for any $t \in[t_1, t_2)$ the energy equality yields
\[
\begin{split}
\liminf_{j\to \infty}\|u^{\nu_j}(t_2)\|_{L^2}^2 &=  \lim_{j\to \infty}\|u^{\nu_j}(t)\|_{L^2}^2 - \limsup_{j \to \infty}2\nu_j\int_{t}^{t_2} \|\nabla u^{\nu_j}(\tau)\|_{L^2}^2 \, d\tau + \lim_{j\to \infty}2\int_{t}^{t_2} (f^{\nu_j},u^{\nu_j}) \, d\tau\\
&=\|u(t)\|_{L^2}^2 - E + 2\int_{t}^{t_2} (f,u) \, d\tau.
\end{split}
\]
Thanks to the weak convergence, $\displaystyle\|u(t_2)\|_{L^2}^2 \leq \liminf_{j\to \infty}\|u^{\nu_j}(t_2)\|_{L^2}^2$. Hence, taking the limit as $t \to t_2-$ and using the assumed continuity of $u(t)$ in $L^2$ we obtain
\[
\|u(t_2)\|_{L^2}^2 \leq \liminf_{j\to \infty}\|u^{\nu_j}(t_2)\|_{L^2}^2 = \|u(t_2)\|_{L^2}^2 - E < \|u(t_2)\|_{L^2}^2,
\]
a contradiction.

\vspace{0.1in}
\noindent
{\bf Case II.}
There exist $0 \leq t_1 < t_2 \leq 1$ with
\[
\limsup_{j\to \infty} \nu_j\int_{t_1}^{t_2} \|\nabla u^{\nu_j}(\tau)\|_{L^2}^2 \, d\tau>0, \qquad u_{\mathrm{Loc}}(t)(t_1)=0, \qquad \text{and} \qquad \|u_{\mathrm{Loc}}(t)(t)\|_{L^2}>0 \qquad \forall t \in (t_1,t_2].
\]

\vspace{0.1in}

Then,  thanks to Lemma~\ref{lemma2},
\begin{equation} \label{eq:convergence_case2}
u^{\nu_j}(t) \to u(t) \qquad \forall t_1 < t \leq t_2,
\end{equation}
in $L^2$. Also, due to Lemma~\ref{lemma1}, for any $t\in(t_1, t_2]$,
\[
\nu_j\int_{t}^{t_2} \|\nabla u^{\nu_j}(\tau)\|_{L^2}^2 \, d\tau \to 0, 
\]
as $j\to \infty$. Then for all $t \in(t_1,t_2]$,
\[
\limsup_{j\to \infty} 2\nu_j\int_{t_1}^{t} \|\nabla u^{\nu_j}(\tau)\|_{L^2}^2 \, d\tau = \limsup_{j\to \infty} 2\nu_j\int_{t_1}^{t_2} \|\nabla u^{\nu_j}(\tau)\|_{L^2}^2 \, d\tau =:E>0.
\]

Then for any $t \in(t_1, t_2]$ the energy equality yields
\[
\begin{split}
\|u(t)\|_2^2=\lim_{j\to \infty}\|u^{\nu_j}(t)\|_{L^2}^2 &=  \lim_{j\to \infty}\left(\|u^{\nu_j}(t_1)\|_{L^2}^2 - 2\nu_j\int_{t_1}^{t} \|\nabla u^{\nu_j}(\tau)\|_{L^2}^2 \, d\tau \right) + \lim_{j\to \infty}2\int_{t_1}^{t} (f^{\nu_j},u^{\nu_j}) \, d\tau\\
&=\limsup_{j\to \infty}\|u^{\nu_j}(t_1)\|_{L^2}^2 - E + 2\int_{t_1}^{t} (f,u) \, d\tau.
\end{split}
\]
Taking the limit as $t \to t_1+$ and using the assumed continuity of $u(t)$ in $L^2$ we obtain
\[
 \|u(t_1)\|_{L^2}^2  < \limsup_{j\to \infty} \|u^{\nu_j}(t_1)\|_{L^2}^2,
\]
and hence \eqref{eq:exists_t_1} holds.

\vspace{0.1in}

\noindent
{\bf Case III.} 
There exist $0 \leq t_1 < t_2 \leq 1$ with
\[
\limsup_{j\to \infty} \nu_j\int_{t_1}^{t_2} \|\nabla u^{\nu_j}(\tau)\|_{L^2}^2 \, d\tau>0 \qquad \text{and} \qquad u_{\mathrm{Loc}}(t)=0 \quad \forall t \in [t_1,t_2].
\]

\vspace{0.1in}

Then the energy equality on $[t_1,t_2]$ yields
\[
\begin{split}
\liminf_{j\to \infty}\|u^{\nu_j}(t_2)\|_{L^2}^2 &\leq  \limsup_{j\to \infty}\|u^{\nu_j}(t_1)\|_{L^2}^2 - \limsup_{j\to \infty}2\nu_j\int_{t_1}^{t_2} \|\nabla u^{\nu_j}(\tau)\|_{L^2}^2 \, d\tau  + \lim_{j\to \infty}2\int_{t_1}^{t_2} (f^{\nu_j},u^{\nu_j}) \, d\tau\\
&<\limsup_{j\to \infty}\|u^{\nu_j}(t_1)\|_{L^2}^2 + 2\int_{t_1}^{t_2} (f,u) \, d\tau.
\end{split}
\]
On the other hand  $u(t)$ satisfies the energy equality on $[t_1,t_2]$ as it belongs to the Onsager class $u \in L^\infty(t_1,t_2;B^{1/3}_{3,c_0})$, and hence we have
\[
\liminf_{j\to \infty}\|u^{\nu_j}(t_2)\|_{L^2}^2 \geq \|u(t_2)\|_{L^2}^2 = \|u(t_1)\|_{L^2}^2 + 2\int_{t_1}^{t_2} (f,u) \, d\tau.
\]

So again we have a time $t_1$ such that
\[
\|u(t_1)\|_{L^2}^2 <\limsup_{j\to \infty}\|u^{\nu_j}(t_1)\|_{L^2},
\]
and thus \eqref{eq:exists_t_1} holds.

\end{proof}

\section*{Acknowledgement }
The author is grateful to the hospitality of Princeton University during Spring 2023 where a portion of this work was completed, and Camillo De Lellis for stimulating discussions and pointing to the open problem of the total dissipation anomaly. 

\section*{Data Availability Statement} No datasets were generated or analysed during the current study.

\newpage


\section{Figures}

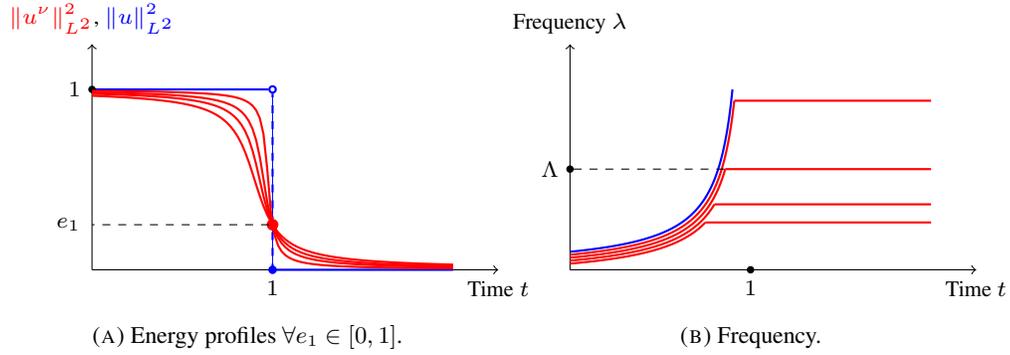
\begin{figure}[!h]
\centering 
\begin{subfigure}[b]{0.4\linewidth}
\begin{tikzpicture} [scale=1.2, every node/.style={transform shape}]

\draw[->] (0,0) -- (4.5,0) node[anchor=north] {\scriptsize Time $t$};
\draw[->] (0,0) -- (0,2.5) node[anchor=south ] {\scriptsize ${\color{red} \|u^\nu\|_{L^2}^2}$, ${\color{blue}\|u\|_{L^2}^2}$};

\draw[thick,color=blue]  (2,2) -- (0,2);
\filldraw  (0,2) circle (1pt)  node[left] {\scriptsize $1$};
\filldraw[thick,dashed, color=blue] (2,2) -- (2,0);
\filldraw (2,0) circle(0pt) node[anchor=north] {\scriptsize$1$};
\filldraw[thick,color=white]	(2,2) circle (1pt);
\draw[thick,color=blue] (2,2) circle (1pt);
\draw[thick, color=blue] (2,0) -- (4,0);
\filldraw[thick,color=blue] (2,0) circle (1pt);

\draw[thick,color=red]   plot[domain=0:4,  samples=100, id=f1] function{1+2*atan(5*(1.8-x))/3.1416};
\draw[thick,color=red]   plot[domain=0:4,  samples=100, id=f2] function{1+2*atan(7*(1.857-x))/3.1416};
\draw[thick,color=red]   plot[domain=0:4,  samples=100, id=f3] function{1+2*atan(10*(1.9-x))/3.1416} ;
\draw[thick,color=red]   plot[domain=0:4,  samples=100, id=f4] function{1+2*atan(20*(1.95-x))/3.1416};

\filldraw[thick,color=red]	(2,0.5) circle (1.5pt);
\draw[dashed]	 (1.96,0.5) -- (0,0.5) node[anchor=east] {\scriptsize $e_1$};

\end{tikzpicture}
\caption{Energy profiles  $\forall e_1 \in[0,1]$.}
\end{subfigure}
\begin{subfigure}[b]{0.4\linewidth}
\begin{tikzpicture} [scale=1.2, every node/.style={transform shape}]

\draw[->] (0,0) -- (4.5,0) node[anchor=north] {\scriptsize Time $t$};
\draw[->] (0,0) -- (0,2.5) node[anchor=south ] {\scriptsize Frequency $\lambda$};

\draw[thick,color=blue]   plot[domain=0:1.8, samples=100, id=f5] function{-0.4/(x-2)};
\filldraw (2,0) circle(1pt) node[anchor=north] {\scriptsize$1$};

\draw[thick,color=red]   plot[domain=0:1.82,  samples=100, id=f6] function{-0.03-0.4/(x-2.03)};

\draw[thick, color=red] (1.82,1.875) -- (4,1.875);

\draw[thick,color=red]   plot[domain=0:1.72,  samples=100, id=f7] function{-0.06-0.4/(x-2.06)};

\draw[dashed]   (1.71,1.1165) -- (0,1.1165);
\filldraw  (0,1.1165) circle (1pt)  node[left] {\scriptsize $\Lambda$};

\draw[thick, color=red] (1.72,1.1165) -- (4,1.1165);

\draw[thick,color=red]   plot[domain=0:1.6,  samples=100, id=f8] function{-0.09-0.4/(x-2.09)};

\draw[thick, color=red] (1.6,0.726) -- (4,0.726);

\draw[thick,color=red]   plot[domain=0:1.5,  samples=100, id=f9] function{-0.12-0.4/(x-2.12)};

\draw[thick, color=red] (1.5,0.525) -- (4,0.525);

\end{tikzpicture}
\caption{Frequency.}
\end{subfigure}
\caption{Theorem~\ref{thm:main}: Convergence of the solutions to the NSE (red) to a solution of the Euler equations (blue). The wavenumber $\Lambda\to \infty$ as $\nu \to 0$ schematically  shows the extend of the energy cascade for the NSE solutions.}
\label{fig:Thm1-convergence}
\end{figure} 


\begin{figure}[ht]
\centering 
\begin{subfigure}[b]{0.32\linewidth}
\begin{tikzpicture} [scale=1.1, every node/.style={transform shape}]

\draw[->] (0,0) -- (4.5,0) node[anchor=north] {\scriptsize Time $t$};
\draw[->] (0,0) -- (0,2.5) node[anchor=south ] {\scriptsize $E(t)$};

\draw[thick,color=red]  (2,2) -- (0,2);
\filldraw  (0,2) circle (1pt)  node[left] {\scriptsize $1$};
\filldraw[thick,dashed, color=red] (2,2) -- (2,0);
\filldraw (2,0) circle(0pt) node[anchor=north] {\scriptsize$1$};
\filldraw[thick,color=white]	(2,2) circle (1pt);
\draw[thick,color=red] (2,2) circle (1pt);
\draw[thick, color=red] (2,0) -- (4,0);
\filldraw[thick,color=red] (2,0.5) circle (1pt);
\filldraw[thick,color=white]	(2,0) circle (1pt);
\draw[thick,color=red] (2,0) circle (1pt);
\filldraw (4,0) circle(1pt) node[anchor=north] {\scriptsize$2$};

\draw[dashed]	 (1.96,0.5) -- (0,0.5) node[anchor=east] {\scriptsize $e_1$};

\end{tikzpicture}
\caption{$\Lambda \gg \kappa_d$.}
\end{subfigure}
\begin{subfigure}[b]{0.32\linewidth}
\begin{tikzpicture} [scale=1.1, every node/.style={transform shape}]

\draw[->] (0,0) -- (4.5,0) node[anchor=north] {\scriptsize Time $t$};
\draw[->] (0,0) -- (0,2.5) node[anchor=south ] {\scriptsize $E(t)$};

\draw[thick,color=red]  (2,2) -- (0,2);
\filldraw  (0,2) circle (1pt)  node[left] {\scriptsize $1$};
\filldraw (2,0) circle(1pt) node[anchor=north] {\scriptsize$1$};

\draw[thick,color=red]   plot[domain=2:4,  samples=100, id=g6] function{2*exp(-(x-2)/8)};

\draw[thick,color=red]   plot[domain=2:4,  samples=100, id=g4] function{2*exp(-(x-2)/4)};

\draw[thick,color=red]   plot[domain=2:4,  samples=100, id=g2] function{2*exp(-(x-2)/2)};

\draw[thick,color=red]   plot[domain=2:4,  samples=100, id=g1] function{2*exp(-(x-2))};

\draw[thick,color=red]   plot[domain=2:4,  samples=100, id=g3] function{2*exp(-(x-2)*2)};

\draw[thick,color=red]   plot[domain=2:4,  samples=100, id=g5] function{2*exp(-(x-2)*4)};

\draw[thick,color=red]   plot[domain=2:4,  samples=100, id=g7] function{2*exp(-(x-2)*8)};

\filldraw (4,0) circle(1pt) node[anchor=north] {\scriptsize$2$};

\draw[dashed]	 (4,0.7358) -- (0,0.7358) node[anchor=east] {\scriptsize $e_2$};
\end{tikzpicture}
\caption{$\Lambda \sim \kappa_d$, $e_2 \in (0,1]$.}
\end{subfigure}
\begin{subfigure}[b]{0.32\linewidth}
\begin{tikzpicture} [scale=1.1, every node/.style={transform shape}]

\draw[->] (0,0) -- (4.5,0) node[anchor=north] {\scriptsize Time $t$};
\draw[->] (0,0) -- (0,2.5) node[anchor=south ] {\scriptsize $E(t)$};

\draw[thick,color=red]  (4,2) -- (0,2);
\filldraw  (0,2) circle (1pt)  node[left] {\scriptsize $1$};
\filldraw (2,0) circle(1pt) node[anchor=north] {\scriptsize$1$};
\filldraw (4,0) circle(1pt) node[anchor=north] {\scriptsize$2$};


\end{tikzpicture}
\caption{$\Lambda \ll \kappa_d$.}
\end{subfigure}
\caption{Theorem~\ref{thm:main}: Energy profiles $E(t)$ for various subsequences of  solutions of the NSE. The wavenumber $\Lambda$ is heuristically compared to Kolmogorov's dissipation number $\kappa_d$.}
\label{fig:Thm1-energy_profiles}
\end{figure}
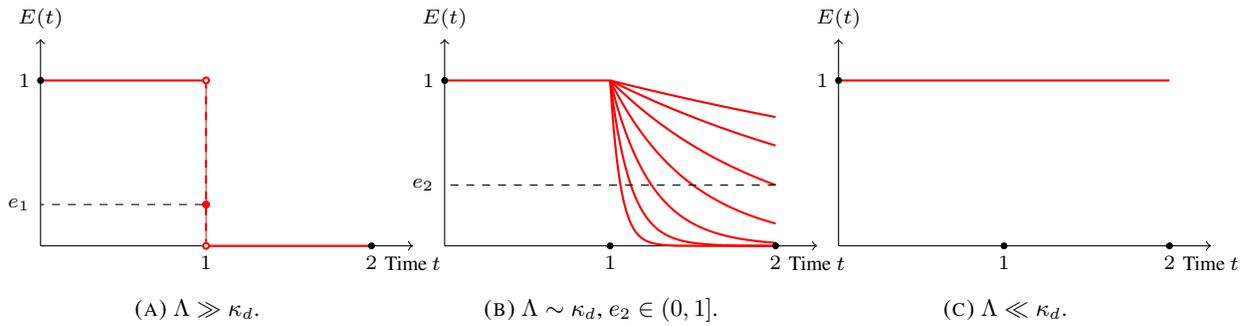




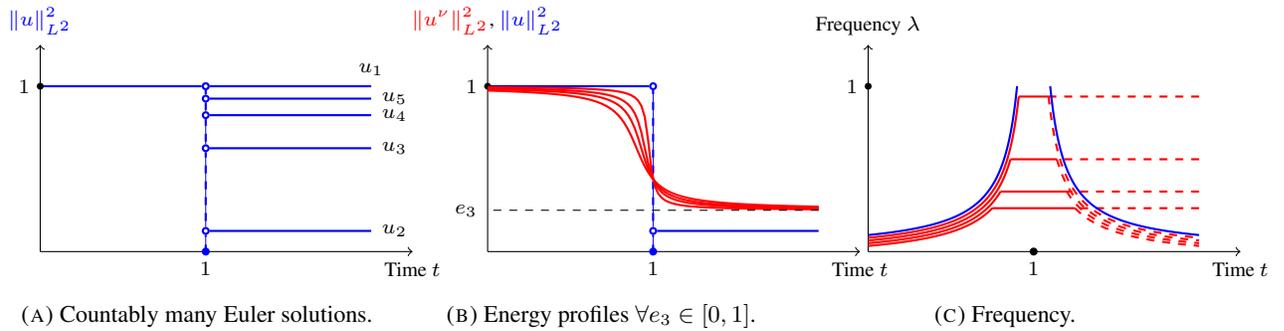
\begin{figure}[ht]
\centering 
\begin{subfigure}[b]{0.32\linewidth}
\begin{tikzpicture} [scale=1.1, every node/.style={transform shape}]

\draw[->] (0,0) -- (4.5,0) node[anchor=north] {\scriptsize Time $t$};
\draw[->] (0,0) -- (0,2.5) node[anchor=south ] {\scriptsize $\color{blue}\|u\|_{L^2}^2$};

\draw[thick,color=blue]  (2,2) -- (0,2);
\filldraw  (0,2) circle (1pt)  node[left] {\scriptsize $1$};
\filldraw[thick,dashed, color=blue] (2,2) -- (2,0);
\filldraw (2,0) circle(0pt) node[anchor=north] {\scriptsize$1$};
\filldraw[thick,color=white]	(2,2) circle (1pt);
\draw[thick,color=blue] (2,2) circle (1pt);
\filldraw[thick, color=blue] (2,0) circle (1pt);


\draw[thick, color=blue] (2,2) -- (4,2) node[anchor=south, color=black] {\scriptsize $u_1$};
\filldraw[thick,color=white]	(2,2) circle (1pt);
\draw[thick, color=blue]	(2,2) circle (1pt);

\draw[thick, color=blue] (2,1.85) -- (4,1.85) node[right, color=black] {\scriptsize $u_5$};
\filldraw[thick,color=white]	(2,1.85) circle (1pt);
\draw[thick, color=blue]	(2,1.85) circle (1pt);

\draw[thick, color=blue] (2,1.65) -- (4,1.65) node[right, color=black] {\scriptsize $u_4$};
\filldraw[thick,color=white]	(2,1.65) circle (1pt);
\draw[thick, color=blue]	(2,1.65) circle (1pt);

\draw[thick, color=blue] (2,1.25) -- (4,1.25) node[right, color=black] {\scriptsize $u_3$};
\filldraw[thick,color=white]	(2,1.25) circle (1pt);
\draw[thick, color=blue]	(2,1.25) circle (1pt);

\draw[thick, color=blue] (2,0.25) -- (4,0.25) node[right, color=black] {\scriptsize $u_2$};
\filldraw[thick,color=white]	(2,0.25) circle (1pt);
\draw[thick, color=blue]	(2,0.25) circle (1pt);



\end{tikzpicture}
\caption{Countably many Euler solutions.}
\label{fig:Thm2-countably_many}

\end{subfigure}
\begin{subfigure}[b]{0.32\linewidth}
\begin{tikzpicture} [scale=1.1, every node/.style={transform shape}]

\draw[->] (0,0) -- (4.5,0) node[anchor=north] {\scriptsize Time $t$};
\draw[->] (0,0) -- (0,2.5) node[anchor=south ] {\scriptsize $\color{red}\|u^\nu\|_{L^2}^2$, $\color{blue}\|u\|_{L^2}^2$};

\draw[thick,color=blue]  (2,2) -- (0,2);
\filldraw  (0,2) circle (1pt)  node[left] {\scriptsize $1$};
\filldraw[thick,dashed, color=blue] (2,2) -- (2,0);
\filldraw (2,0) circle(0pt) node[anchor=north] {\scriptsize$1$};
\filldraw[thick,color=white]	(2,2) circle (1pt);
\draw[thick,color=blue] (2,2) circle (1pt);
\draw[thick, color=blue] (2,0.25) -- (4,0.25);
\filldraw[thick, color=blue] (2,0) circle (1pt);

\draw[thick,color=red]   plot[domain=0:4,  samples=100, id=f10] function{1.25+1.5*atan(5*(1.8-x))/3.1416};
\draw[thick,color=red]   plot[domain=0:4,  samples=100, id=f11] function{1.25+1.5*atan(7*(1.857-x))/3.1416};
\draw[thick,color=red]   plot[domain=0:4,  samples=100, id=f12] function{1.25+1.5*atan(10*(1.9-x))/3.1416} ;
\draw[thick,color=red]   plot[domain=0:4,  samples=100, id=f13] function{1.25+1.5*atan(20*(1.95-x))/3.1416};

\filldraw[thick,color=white]	(2,0.25) circle (1pt);
\draw[thick, color=blue]	(2,0.25) circle (1pt);
\draw[dashed]	 (4,0.5) -- (0,0.5) node[anchor=east] {\scriptsize $e_3$};


\end{tikzpicture}
\caption{Energy profiles $\forall e_3 \in [0,1]$.}
\end{subfigure}
\begin{subfigure}[b]{0.32\linewidth}
\begin{tikzpicture} [scale=1.1, every node/.style={transform shape}]

\draw[->] (0,0) -- (4.5,0) node[anchor=north] {\scriptsize Time $t$};
\draw[->] (0,0) -- (0,2.5) node[anchor=south ] {\scriptsize Frequency $\lambda$};

\draw[thick,color=blue]   plot[domain=0:1.8, samples=100, id=f14] function{-0.4/(x-2)};
\filldraw  (0,2) circle (1pt)  node[left] {\scriptsize $1$};
\filldraw (2,0) circle(0pt) node[anchor=north] {\scriptsize$1$};
\filldraw[thick] (2,0) circle (1pt);


\draw[thick,color=red]   plot[domain=0:1.82,  samples=100, id=f16] function{-0.03-0.4/(x-2.03)};

\draw[thick,color=red,dashed]   plot[domain=2.18:4,  samples=100, id=f17] function{-0.03-0.4/(4-x-2.03)};

\draw[thick, color=red] (1.82,1.875) -- (2.18,1.875);
\draw[thick, color=red,dashed] (4,1.875) -- (2.18,1.875);

\draw[thick,color=red]   plot[domain=0:1.72,  samples=100, id=f18] function{-0.06-0.4/(x-2.06)};
\draw[thick,color=red,dashed]   plot[domain=2.28:4,  samples=100, id=f19] function{-0.06-0.4/(4-x-2.06)};

\draw[thick, color=red] (1.72,1.1165) -- (2.28,1.1165);
\draw[thick, color=red,dashed] (4,1.1165) -- (2.28,1.1165);

\draw[thick,color=red]   plot[domain=0:1.6,  samples=100, id=f20] function{-0.09-0.4/(x-2.09)};
\draw[thick,color=red,dashed]   plot[domain=2.4:4,  samples=100, id=f21] function{-0.09-0.4/(4-x-2.09)};

\draw[thick, color=red] (1.6,0.726) -- (2.4,0.726);
\draw[thick, color=red,dashed] (4,0.726) -- (2.4,0.726);

\draw[thick,color=red]   plot[domain=0:1.5,  samples=100, id=f22] function{-0.12-0.4/(x-2.12)};
\draw[thick,color=red,dashed]   plot[domain=2.5:4,  samples=100, id=f23] function{-0.12-0.4/(4-x-2.12)};

\draw[thick, color=red] (1.5,0.525) -- (2.5,0.525);
\draw[thick, color=red,dashed] (4,0.525) -- (2.5,0.525);

\draw[thick,color=blue]   plot[domain=2.2:4, samples=100, id=f15] function{0.4/(x-2)};

\end{tikzpicture}
\caption{Frequency.}
\end{subfigure}
\caption{Theorem~\ref{thm:main-2}: Convergence of the solutions of the NSE (red) to a solution of the Euler equation (blue).}
\label{fig:Thm2-convergence}
\end{figure} 




\pagebreak

\bibliographystyle{alpha}
\bibliography{Dissipation_Anomaly}

\end{document}